	\def\thm@space@setup{%
		\thm@preskip=\parskip \thm@postskip=0pt
	}
\DeclareMathAlphabet{\mathpzc}{OT1}{pzc}{m}{it}
\newcommand{\unitary}{{\mathrm{uni}}}
\newcommand{\unitaryPoint}{{\mathrm{uni},\mathrm{pnt}}}
\newcommand{\dual}[1]{{#1}^*}
\newcommand{\unitaryDual}[1]{{#1}^{*,\unitary}}
\newcommand{\spr}{\mathrm{r}} 
\newcommand{\modulus}[1]{\left\lvert #1 \right\rvert}
\newcommand{\norm}[1]{\left\lVert #1 \right\rVert}
\newcommand{\one}{\mathbbm{1}}
\newcommand{\ImpliesProof}[2]{\ref{#1}$\Rightarrow$\ref{#2}:}
\newcommand{\EquivalentProof}[2]{\ref{#1}$\Leftrightarrow$\ref{#2}:}
\newcommand{\notion}[1]{\emph{#1}}
\newcommand{\N}{\mathbb{N}} 
\newcommand{\Z}{\mathbb{Z}}
\newcommand{\R}{\mathbb{R}}
\newcommand{\C}{\mathbb{C}}
\newcommand{\T}{\mathbb{T}}
\newcommand{\fix}{\operatorname{fix}}
\newtheorem{theorem}{Theorem}[section]
\newtheorem{lemma}[theorem]{Lemma}
\newtheorem{corollary}[theorem]{Corollary}
\newtheorem{proposition}[theorem]{Proposition}
\theoremstyle{definition}
\newtheorem{remark}[theorem]{Remark}
\newtheorem{standingassumption}[theorem]{Standing Assumption}
\newtheorem{definition}[theorem]{Definition}
\newtheorem{example}[theorem]{Example}
\newtheorem{examples}[theorem]{Examples}
\setlist[enumerate]{
  topsep = 0pt,
  itemsep = 0.0ex,
  partopsep = 1ex,
  parsep = 1ex,
  labelsep = 0.2cm,
  label = (\alph{enumi}),
  font = \normalfont,
  ref = \labelenumi
}
\setlist[itemize]{
  partopsep = 1ex,
  parsep = 1ex
}
\author{Jochen Glück}
\address{Jochen Glück, School of Mathematics and Natural Sciences, University of Wuppertal, Gaußstraße 20,
D-42119 Wuppertal, Germany}
\email{glueck@uni-wuppertal.de}
\author{Patrick Hermle}
\address{Patrick Hermle, School of Mathematics and Natural Sciences, University of Wuppertal, Gaußstraße 20,
D-42119 Wuppertal, Germany}
\email{patrick.hermle@uni-wuppertal.de}
\author{Henrik Kreidler}
\address{Henrik Kreidler, School of Mathematics and Natural Sciences, University of Wuppertal, Gaußstraße 20,
D-42119 Wuppertal, Germany}
\email{kreidler@uni-wuppertal.de}
\subjclass[2020]{47D03, 47D06, 47B65, 46B42}
\keywords{operator semigroup; long-term behaviour; spectrum of semigroup representations; positive operator; quasi-compactness}
\begin{document}

\title{Uniform ergodic theorems for semigroup representations}

\maketitle

\begin{abstract}
	We consider a bounded representation $T$ of a commutative semigroup $S$ on a Banach space and analyse the relation between three concepts: (i) properties of the unitary spectrum of $T$, which is defined in terms of semigroup characters on $S$; (ii) uniform mean ergodic properties of $T$; and (iii) quasi-compactness of $T$. 
	
	We use our results to generalize the celebrated Niiro--Sawashima theorem to semigroup representations and, as a consequence, obtain the following: if a positive and bounded semigroup representation on a Banach lattice is uniformly mean ergodic and has finite-dimensional fixed space, then it is quasi-compact.
\end{abstract}

\section{Introduction}

\subsection*{Ergodic properties of operators and their powers}

For a power-bounded linear operator $T$ on a Banach space $E$ 
there is an intimate and well-known relation between the following three concepts: 
\begin{enumerate}[(i)]
	\item 
	The \notion{unitary spectrum} of $T$, 
	i.e., of those spectral values $\lambda$ of $T$ in the complex unit circle.
	
	\item 
	\notion{Uniform mean ergodicity} of $T$, i.e., 
	the property that the Cesàro means $\frac{1}{n} \sum_{k=0}^{n-1} T^k$ of the powers of $T$ converge in operator norm as $n \to \infty$.
	
	\item 
	\notion{Quasi-compactness} of $T$, i.e., the existence of a compact operator $K$ and an integer $n \ge 0$ 
	such that $\|T^n - K\| < 1$.
\end{enumerate}
More specifically, $T$ is quasi-compact if and only if 
$T$ is \emph{totally uniformly mean ergodic}, i.e., $\mu T$ is uniformly mean ergodic for every $\mu$ in the complex unit circle, and all eigenspaces of $T$ for unimodular eigenvalues are finite-dimensional. Moreover, this is precisely the case when every spectral value of $T$ with modulus $1$ is a pole of the resolvent with finite-dimensional spectral space.

The case where $E$ is a Banach lattice and $T$ is positive is of particular importance. 
In this case, the power-bounded operator $T$ is quasi-compact if and only if 
it is uniformly mean ergodic with finite-dimensional fixed space, 
meaning that the Cesàro means $\frac{1}{n} \sum_{k=0}^{n-1} T^k$ converge with respect to the operator norm 
to a finite-rank projection as $n \to \infty$. 
This was shown by Lin \cite[Theorem~1]{Lin78} using a theorem of Lotz and Schaefer \cite[Theorem~2]{LoSc1968}
which, in turn, was a generalization of a celebrated spectral theoretic result for positive operators 
due to Niiro and Sawashima \cite[Main Theorem and Theorem~9.2]{NiSa1966}.

\subsection*{Contributions: Ergodic properties of semigroup representations}

In this article we generalize the aforementioned results to bounded representations $T$ of commutative semigroups $S$. 
In contrast to what is quite common in the literature, we do not endow $S$ with any topology and hence, 
we do not impose any continuity assumptions on the mapping $s \mapsto T_s$. 
Or, speaking from a different perspective, we only consider the discrete topology on $S$. 
As a consequence, our results cannot reflect any non-trivial continuity properties of the mapping $s \mapsto T_s$. 
While this might appear as a disadvantage at first glance, 
it fits the philosophy of the recent articles \cite{GerGlu2019, GlHa2019, DoGl2021} 
that, even in the case $S = \big([0,\infty), +\big)$, it is desirable to obtain results about the long-term behaviour of $T$ 
that do not depend on topological properties of $S$ at all.

Our decision to refrain from any topological assumptions on $S$ brings two further advantages:
On the one hand, we do not need to rely on Fourier transforms to define the spectrum of a representation 
and thus, our semigroup $S$ does not need to embed into a group. 
On the other hand, it is very easy within our setting to work with semigroup representations on Banach algebras, 
since there is no need for a strong operator topology. 
This makes it straightforward to apply our theory on the Calkin algebra of a Banach space (\cref{charquasicomp}).

In the first half of the article (Section~\ref{secspec}) we develop a spectral theory for semigroup representations. 
We compare our approach with various related notions from the literature in Remark~\ref{rem:compare-lit}.

In the second half of the article (Section~\ref{secuniform}) we relate the spectrum of a semigroup representation $T$ 
to its long-term behaviour. 
Here we focus in particular on \notion{uniform mean ergodicity} 
(Subsections~\ref{subsec:erg} and~\ref{subsec:total}) and  
on \notion{quasi-compactness} (Subsection~\ref{subsec:quasi-compact}). 
In the final Subsection~\ref{subsec:nisa} we consider positive representations on Banach lattices and 
generalize the celebrated Niiro--Sawashima theorem to such representations.

The entire article is set in the following framework:

\begin{standingassumption}
	\label{assu:standing}
	Throughout the paper we fix a commutative semigroup $(S,+)$, 
	i.e., $S$ is a set and $+ \colon S \times S \rightarrow S$ is an associative operation, 
	and assume that $S$ contains a neutral element $0$. 
	We endow $S$ with a reflexive and transitive relation $\le$ 
	given by $s \le t$ if and only if there exists an element $r \in S$ such that $s+r=t$.
\end{standingassumption}

Note that the relation $\le$ turns $S$ into a directed set: 
for all $s_1, s_2 \in S$ the element $s \coloneqq s_1+s_2$ satisfies $s \ge s_1$ and $s \ge s_2$.

The assumption that $S$ contains a neutral element is only there since it makes some results more convenient to write down. 
For instance, if no neutral element existed, the definition of $\le$ would have to be a bit more involved to ensure reflexivity. 
Note, however, that if a semigroup does not contain a neutral element, one can always adjoin a neutral element to it. 

The semigroup $S$ is called \notion{cancellative} if 
for each $s_0 \in S$ the translation map $S \to S$, $s \mapsto s_0 + s$ is injective. 
It is classical result that the commutative semigroup $S$ embeds into a (commutative) group 
if and only if it is cancellative, see for instance \cite[Theorem~3.10]{Nagy01}.

\subsection*{Convention regarding the scalar field}

All vector spaces throughout the article are complex.

\subsection*{Notation}

We use the convention $\N \coloneqq \{0, 1, 2, \dots\}$. 
Given a Banach space $E$ we write $\mathscr{L}(E)$ for the unital Banach algebra of bounded linear operators on $E$, 
and $E'$ for the dual space of $E$. 
For a unital Banach algebra $A$ we denote its neutral element with respect to multiplication by $\one$. 
For an element $a \in A$ and a number $\lambda \in \C$ 
we sometimes write $\lambda - a$ as a shorthand for $\lambda \one - a$.

\section{Spectral theory for semigroup representations}\label{secspec}

In this first part of the article we consider bounded representations of semigroups and develop a spectral theory for them 
-- more specifically, a theory that focusses on unitary spectral values, 
since those are essential for the long-term behaviour of the semigroup representation.

\subsection{Semigroup representations}

A \notion{representation} $T$ of the semigroup $S$ in a unital Banach algebra $A$ is a monoid homomorphism $T \colon S \rightarrow A$ 
where $A$ is considered as a multiplicative semigroup with neutral element $\one$, 
i.e., $T_{s+t} = T_sT_t$ for all $s,t \in S$ und $T_0 = \one$\color{red}.\color{black} 
A representation $T$ is called \notion{bounded} if $\sup_{s \in S} \|T_s\| < \infty$. 
We note that every represenation $T$ of $S$ in $A$ can be interpreted as a net $(T_s)_{s \in S}$ 
since $S$ is a directed set with respect to the relation $\le$ defined in Standing Assumption~\ref{assu:standing}.

A \notion{semigroup character} of $S$ is a representation of $S$ in the one-dimensional Banach algebra $A = \C$, 
i.e., a monoid homomorphism $\chi \colon S \rightarrow \C$ to the multiplicative semigroup of complex numbers. 
A semigroup character $\chi \colon S \rightarrow \C$ is called a \notion{unitary semigroup character} if it maps into the complex unit circle $\T \coloneqq \{z \in \C \mid |z| = 1\}$.
We write $\dual{S}$ for the set of all semigroup characters of $S$ and $\unitaryDual{S}$ for the set of all unitary semigroup characters and call $\dual{S}$ the \emph{dual} of $S$ and $\unitaryDual{S}$ the \notion{unitary dual} of $S$. 
Equipped with pointwise multiplication and the topology of pointwise convergence, the unitary dual $\unitaryDual{S}$ is a compact group with neutral element  $\mathds{1}_{S} \colon S \rightarrow \T, \, s \mapsto 1$ and inverses $\overline{\chi} \colon S \rightarrow \T, \, s \mapsto\overline{\chi(s)}$ for $\chi \in \unitaryDual{S}$. 
We point out that if $S$ is even a group, the dual group of the (discrete) group $S$ in the sense of harmonic analysis is $\unitaryDual{S}$ (rather than $\dual{S}$).
Observe that for every given representation $T \colon S \rightarrow A$ in a unital Banach algebra $A$ and for every given semigroup character $\chi \in \dual{S}$ the mapping $\chi T \colon S \rightarrow A$, $s \mapsto \chi(s)T_s$ is also a representation of $S$.

We are mostly concerned with more concrete representations of $S$ as bounded operators on a Banach space, i.e., with the case $A = \mathscr{L}(E)$ for some Banach space $E$. 
For such representations $T \colon S \rightarrow \mathscr{L}(E)$ we can consider common standard constructions such as the following:
\begin{itemize}
	\item[(a)] 
	The \notion{restricted representation} $T|_F \colon S \rightarrow \mathscr{L}(E), \, s \mapsto (T_s)|_F$, 
	where $F$ is a closed vector subspace $F\subseteq E$ which is invariant under all operators $T_s$.
	
	\item[(b)] 
	The \notion{dual representation} $T' \colon S \rightarrow \mathscr{L}(E'), \, s \mapsto T_s'$ on the dual Banach space $E'$.
	
	\item[(c)] 
	The \notion{sum} $T_1 \oplus T_2 \colon S \rightarrow \mathscr{L}(E_1 \oplus E_2)$ 
	of two representations $T_1 \colon S \rightarrow \mathscr{L}(E_1)$ and $T_2 \colon S \rightarrow \mathscr{L}(E_2)$ 
	for Banach spaces $E_1, E_2$ on any sum $E_1 \oplus E_2$ is defined in the obvious way.
\end{itemize}

\begin{remark}
	\label{opvsrep}
	Recall that for the additive semigroup $S= \N \coloneqq \{0, 1, 2, \dots\}$, 
	a representation of $S$ in a unital Banach algebra $A$ is completely determined by the element $T_1 \in A$, 
	and conversely every element $a \in A$ induces a representation
	\begin{align*}
		T_a\colon \N \to A,\quad n \mapsto a^n
	\end{align*}
	of the semigroup $\N$ with $(T_a)_1 = a$. 
	So the representations of $\N$ in a Banach algbera $A$ are in one-to-one correspondence with the elements of $A$. 
	
	Moreover, the unitary dual $\unitaryDual{\N}$ of $\N$ and the complex unit circle $\T$ are isomorphic 
	as topological groups via the map $\unitaryDual{\N} \to \T,\, \chi \mapsto \chi(1)$.
	Via these correpondences, the concepts and results of the following sections generalize the existing theory for single operators.
\end{remark}

\subsection{The unitary spectrum}

Based on spectral theory in Banach algebras we introduce some spectral theoretic concepts for bounded semigroup representations.
Recall that for a unital Banach algebra $A$ the \notion{spectrum} of an element $a \in A$ is the compact set
$\sigma(a)  \coloneqq \{\lambda \in \C \mid (\lambda - a) \text{ is not invertible}\}$.
We call $\sigma_{\unitary}(a) \coloneqq \sigma(a) \cap \T$ the \notion{unitary spectrum} of $a$.  
The number 
\begin{align*}
	\spr(a) 
	\coloneqq 
	\max \{\lvert \lambda \rvert : \, \lambda \in \sigma(a)\} 
	= 
	\lim_{n \rightarrow \infty} \|a^n\|^{\frac{1}{n}}
\end{align*}
is called the \notion{spectral radius} of $a \in A$. 
If $a \in A$ is \notion{power-bounded}, i.e., $\sup_{n \in \N} \|a^n\| < \infty$,
then the preceding formula shows that $\spr(a) \le 1$, i.e., the spectrum $\sigma(a)$ is contained in the closed unit disk.

A bounded linear functional $\psi: A \to \C$ is called a \notion{algebra character} if it is non-zero 
and is also a multiplicative semigroup homomorphism, i.e., $\psi(ab) = \psi(a)\psi(b)$ for all $a,b \in A$.
Recall that one automatically has $\psi(\mathbbm{1}) = 1$ for an algebra character $\psi$.
The \notion{Gelfand space} of $A$ is the subset $\Gamma_A$ of the dual Banach space $A'$ that consists of all algebra characters; 
it is a compact set when when equipped with the weak*-topology. 
It is well-known that, if $A$ is commutative, the Gelfand space can be used to characterize the spectra of elements $a \in A$: 
One then has $\sigma(a) = \{\psi(a) \colon \, \psi \in \Gamma_A\}$ for each $a \in A$.

We highlight that the spectrum does depend on the surrounding algebra, 
i.e., if $A$ is a unital Banach algebra, $B$ is a closed unital subalgebra of $A$, and $a \in B$, 
then the spectrum of $a$ can be different in in $A$ and $B$, in general.
If we want to stress the dependence on the surrounding algebra we write $\sigma_A(a)$ rather than $\sigma(A)$ 
and $\sigma_{A,\unitary}(a)$ rather than $\sigma_{\unitary}(a)$ for the unitary spectrum.
However, if $a \in B$ has spectral radius $\spr(a) \le 1$, 
then $\sigma_{A,\unitary}(a) = \sigma_{B,\unitary}(a)$ as a consequence of \cite[Theorem~1.2.8]{Murr1990}. 
We refer to \cite[Chapter~1]{Murr1990} for a general introduction to spectral theory in Banach algebras.

We now want to define the unitary spectrum of a bounded semigroup representation $T: S \rightarrow A$. 
To motivate our approach let us first recall the following characterizations of the spectrum of a power bounded element $a \in A$: 

\begin{proposition}
	\label{prop:spectrum-of-element}
	Let $A$ be a unital Banach algebra and let $a \in A$ be power-bounded.
	For each $\lambda \in \C$ of modulus $\modulus{\lambda} = 1$ the following are equivalent: 
	\begin{enumerate}[start=19, label=\upshape(\alph*)]
		\item\label{prop:spectrum-of-element:itm:spec} 
		One has $\lambda \in \sigma(a)$ 
		(equivalently, $\lambda \in \sigma_{\unitary}(a)$).
		
		\setcounter{enumi}{0}
		
		\item\label{prop:spectrum-of-element:itm:approx} 
		The number $\lambda$ is an \notion{approximate eigenvalue of $a$}, 
		i.e., there exists a sequence $(c_n)$ of normalized elements in $A$ such that
		$(\lambda - a)c_n \to 0$.
		
		\item\label{prop:spectrum-of-element:itm:ideal} 
		The left ideal $A(\lambda-a)$ is not equal to $A$  
		(equivalently: does not contain $\one$).
		
		\item\label{prop:spectrum-of-element:itm:ideal-right} 
		The right ideal $(\lambda-a)A$ is not equal to $A$  
		(equivalently: does not contain $\one$).
		
		\item\label{prop:spectrum-of-element:itm:char-all} 
		For every closed commutative unital subalgebra $B$ of $A$ that satisfies $a \in B$ 
		there exists an algebra character $\psi \in \Gamma_B$ such that $\psi(a) = \lambda$.
		
		\item\label{prop:spectrum-of-element:itm:char-exists} 
		There exists a closed commutative unital subalgebra $B$ of $A$ that satisfies $a \in B$ 
		and an algebra character $\psi \in \Gamma_B$ such that $\psi(a) = \lambda$.
		
		\item\label{prop:spectrum-of-element:itm:laplace} 
		For every complex polynomial $p$ one has $\modulus{p(\lambda)} \le \norm{p(a)}$.
	\end{enumerate}
\end{proposition}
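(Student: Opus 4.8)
The plan is to organize the seven assertions around the spectral condition~\ref{prop:spectrum-of-element:itm:spec} into two clusters that are each closed into a short cycle: the approximate and one-sided conditions \ref{prop:spectrum-of-element:itm:approx}, \ref{prop:spectrum-of-element:itm:ideal}, \ref{prop:spectrum-of-element:itm:ideal-right}, and the Gelfand-type conditions \ref{prop:spectrum-of-element:itm:char-all}, \ref{prop:spectrum-of-element:itm:char-exists}, \ref{prop:spectrum-of-element:itm:laplace}. The guiding observation is that the two hypotheses $\modulus{\lambda}=1$ and power-boundedness of $a$ (which forces $\spr(a)\le 1$, hence $\sigma(a)\subseteq\overline{\D}$) are precisely what collapse the one-sided, two-sided and approximate notions onto each other; without them several of these implications are false.

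I would first run \ref{prop:spectrum-of-element:itm:spec}~$\Rightarrow$~\ref{prop:spectrum-of-element:itm:approx}~$\Rightarrow$~\ref{prop:spectrum-of-element:itm:ideal}~$\Rightarrow$~\ref{prop:spectrum-of-element:itm:spec}. The last step is immediate, since $A(\lambda-a)\ne A$ means $\lambda-a$ has no left inverse and so is not invertible. For the middle step, a left inverse $b$ of $\lambda-a$ together with normalized $c_n$ satisfying $(\lambda-a)c_n\to 0$ would give $c_n=b(\lambda-a)c_n\to 0$, contradicting $\norm{c_n}=1$; hence $\one\notin A(\lambda-a)$. The crux is \ref{prop:spectrum-of-element:itm:spec}~$\Rightarrow$~\ref{prop:spectrum-of-element:itm:approx}: because $\sigma(a)\subseteq\overline{\D}$ and $\modulus{\lambda}=1$, the value $\lambda$ lies on the topological boundary of $\sigma(a)$, so the points $\lambda_n\coloneqq(1+\tfrac1n)\lambda$ lie in the resolvent set and $\norm{(\lambda_n-a)^{-1}}\ge 1/\dist(\lambda_n,\sigma(a))\to\infty$. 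Then $c_n\coloneqq(\lambda_n-a)^{-1}/\norm{(\lambda_n-a)^{-1}}$ are normalized with $(\lambda_n-a)c_n=\one/\norm{(\lambda_n-a)^{-1}}\to 0$, whence $(\lambda-a)c_n\to 0$. Finally, \ref{prop:spectrum-of-element:itm:ideal-right} is obtained by applying this cycle verbatim in the opposite algebra $A^{\mathrm{op}}$, in which norms and spectra are unchanged but left and right ideals are interchanged.

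For the Gelfand cluster I would run \ref{prop:spectrum-of-element:itm:spec}~$\Rightarrow$~\ref{prop:spectrum-of-element:itm:char-all}~$\Rightarrow$~\ref{prop:spectrum-of-element:itm:char-exists}~$\Rightarrow$~\ref{prop:spectrum-of-element:itm:spec}, and then attach \ref{prop:spectrum-of-element:itm:laplace} separately. For any closed commutative unital $B\ni a$, the spectral radius $\spr(a)\le 1$ is the same in $A$ and $B$, so the permanence result \cite[Theorem~1.2.8]{Murr1990} quoted above gives $\sigma_{A,\unitary}(a)=\sigma_{B,\unitary}(a)$; thus $\lambda\in\sigma_A(a)\cap\T$ forces $\lambda\in\sigma_B(a)$, and commutativity of $B$ yields $\psi\in\Gamma_B$ with $\psi(a)=\lambda$. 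The passage to \ref{prop:spectrum-of-element:itm:char-exists} is trivial, a witnessing $B$ being the closed subalgebra generated by $a$. Conversely a character $\psi$ with $\psi(a)=\lambda$ annihilates $\lambda-a$, so $\lambda-a$ is non-invertible in $B$ and $\lambda\in\sigma_{B,\unitary}(a)=\sigma_{A,\unitary}(a)\subseteq\sigma_A(a)$. The implication \ref{prop:spectrum-of-element:itm:char-exists}~$\Rightarrow$~\ref{prop:spectrum-of-element:itm:laplace} is immediate since algebra characters are contractive: $\modulus{p(\lambda)}=\modulus{\psi(p(a))}\le\norm{p(a)}$. For the converse \ref{prop:spectrum-of-element:itm:laplace}~$\Rightarrow$~\ref{prop:spectrum-of-element:itm:char-exists}, the estimate $\modulus{p(\lambda)}\le\norm{p(a)}$ shows that $p(a)\mapsto p(\lambda)$ is well defined (if $p(a)=q(a)$ then $\modulus{(p-q)(\lambda)}\le\norm{(p-q)(a)}=0$) as well as linear, multiplicative and contractive on the dense algebra of polynomials in $a$, so it extends to an algebra character on the closed commutative subalgebra generated by $a$ that sends $a$ to $\lambda$.

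The step I expect to be the main obstacle is \ref{prop:spectrum-of-element:itm:spec}~$\Rightarrow$~\ref{prop:spectrum-of-element:itm:approx}: one has to realize a unitary boundary point of $\sigma(a)$ as an approximate eigenvalue by honest elements of $A$ rather than by vectors of an ambient Banach space, building the normalized $c_n$ out of the resolvent blow-up. This is exactly the point at which power-boundedness is indispensable -- it is what places $\lambda$ on the boundary of $\sigma(a)$ -- and it is what forces the one-sided conditions \ref{prop:spectrum-of-element:itm:ideal} and \ref{prop:spectrum-of-element:itm:ideal-right} to coincide with the two-sided condition \ref{prop:spectrum-of-element:itm:spec}, a coincidence that fails for general elements $a$.
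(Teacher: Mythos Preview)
Your proof is correct and the overall architecture matches the paper's, but two links in the chain are handled differently.

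First, for \ref{prop:spectrum-of-element:itm:laplace}~$\Rightarrow$~\ref{prop:spectrum-of-element:itm:char-exists} you extend the well-defined contractive homomorphism $p(a)\mapsto p(\lambda)$ from the polynomial algebra to its closure. The paper instead proves \ref{prop:spectrum-of-element:itm:laplace}~$\Rightarrow$~\ref{prop:spectrum-of-element:itm:approx} directly, by testing \ref{prop:spectrum-of-element:itm:laplace} on the Ces\`aro polynomials $p_n(z)=\tfrac1n\sum_{k=0}^{n-1}\overline{\lambda}^k z^k$: one computes $(\lambda-a)p_n(a)=\lambda\tfrac1n(\one-(\overline{\lambda}a)^n)\to 0$ by power-boundedness, while $\norm{p_n(a)}\ge\modulus{p_n(\lambda)}=1$, so the normalized $p_n(a)$ witness \ref{prop:spectrum-of-element:itm:approx}. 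Your argument is shorter and self-contained; the paper's is more constructive and, more to the point, it rehearses exactly the ergodic-net technique used immediately afterwards in the semigroup generalization (Proposition~\ref{prop:charspec}, step \ref{prop:charspec:itm:laplace}~$\Rightarrow$~\ref{prop:charspec:itm:approx}).

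Second, to reach \ref{prop:spectrum-of-element:itm:char-all} you invoke spectral permanence for the unitary part to transfer $\lambda$ from $\sigma_A(a)$ into $\sigma_B(a)$, and then apply Gelfand theory in $B$. The paper instead goes \ref{prop:spectrum-of-element:itm:ideal}~$\Rightarrow$~\ref{prop:spectrum-of-element:itm:char-all}: from $A(\lambda-a)\neq A$ one gets $B(\lambda-a)\not\ni\one$, so $\lambda-a$ sits in a maximal ideal of $B$, which is the kernel of some $\psi\in\Gamma_B$. This avoids citing the permanence theorem and is again the template for the corresponding step in Proposition~\ref{prop:charspec}. Both routes are perfectly valid; the paper's choices are tailored to generalize cleanly to the semigroup setting where no single resolvent or spectral-permanence statement is available.
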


\begin{proof}
	\ImpliesProof{prop:spectrum-of-element:itm:spec}{prop:spectrum-of-element:itm:approx} 
	Since $a$ is power bounded, the spectral radius of $a$ is no more than $1$, 
	so $\lambda$ is in the boundary of $\sigma(a)$. 
	Hence, one can use the following standard argument: 
	let $(\lambda_n)$ be a sequence in $\C \setminus \sigma(a)$ such that $\lambda_n \to \lambda$ 
	and define $c_n \coloneqq (\lambda_n-a)^{-1} / \norm{(\lambda_n-a)^{-1}}$ for each $n$. 
	By using that $\norm{(\lambda_n-a)^{-1}} \to \infty$ one can easily check that $(\lambda-a)c_n \to 0$.
	
	\ImpliesProof{prop:spectrum-of-element:itm:approx}{prop:spectrum-of-element:itm:ideal} 
	If there exists $b \in A$ such that $b(\lambda-a) = \one$ 
	and $(c_n)$ is a normalized sequence, then $b(\lambda-a)c_n = c_n$ for each $n$, 
	so $(\lambda-a)c_n$ cannot converge to $0$ as $n \to \infty$.
	
	\ImpliesProof{prop:spectrum-of-element:itm:ideal}{prop:spectrum-of-element:itm:char-all} 
	It follows from~\ref{prop:spectrum-of-element:itm:ideal} that the ideal $B(\lambda-a)$ in $B$ does not contain $\one$. 
	So there exists a maximal ideal $I$ in $B$ that contains $\lambda-a$. 
	As $B$ is commutative, $I$ is the kernel of a character $\psi \in \Gamma_B$, see \cite[Theorem 1.3.3]{Murr1990}.
	So $\psi(\lambda-a) = 0$ and hence, $\psi(a) = \lambda$.
	
	\ImpliesProof{prop:spectrum-of-element:itm:char-all}{prop:spectrum-of-element:itm:char-exists} 
	This implication is obvious since there exists a closed commutative unital subalgebra of $A$ that contains $a$.
	
	\ImpliesProof{prop:spectrum-of-element:itm:char-exists}{prop:spectrum-of-element:itm:laplace} 
	Let $p$ be a complex polynomial and let $B$ and $\psi$ be as in~\ref{prop:spectrum-of-element:itm:char-exists}. 
	Since $\norm{\psi}=1$ and $p(a) \in B$ we have 
	\begin{align*}
		\modulus{ p(\lambda) } 
		= 
		\modulus{ p(\psi(a)) }
		=
		\modulus{ \psi(p(a)) } 
		\le  
		\norm{p(a)}
		.
	\end{align*}
	
	\ImpliesProof{prop:spectrum-of-element:itm:laplace}{prop:spectrum-of-element:itm:approx} 
	For every integer $n \ge 1$ define the complex polynomial $p_n$ 
	by $p_n(z) = \frac{1}{n}\sum_{k=0}^{n-1} \overline{\lambda}^k z^k$ for all $z \in \C$. 
	One has
	\begin{align*}
		(\lambda - a) p_n(a) 
		= 
		\lambda (\one - \overline{\lambda} a) p_n(a) 
		= 
		\lambda \frac{1}{n} \big( \one - (\overline{\lambda}a)^n \big)
		\to 
		0
	\end{align*}
	as $n \to \infty$ due to the power boundedness of $a$.
	Moreover, it follows from~\ref{prop:spectrum-of-element:itm:laplace} that 
	$\norm{p_n(a)} \ge \modulus{p_n(\lambda)} = 1$. 
	So the normalized elements $c_n \coloneqq p_n(a)/\norm{p_n(a)}$ also satisfy $(\lambda-a)c_n \to 0$, 
	which shows~\ref{prop:spectrum-of-element:itm:approx}.
	
	\ImpliesProof{prop:spectrum-of-element:itm:approx}{prop:spectrum-of-element:itm:spec} 
	If $\lambda \not\in \sigma(a)$ but $(c_n)$ is a normalized sequence such that $(\lambda-a)c_n \to 0$, then 
	\begin{align*}
		c_n 
		= 
		(\lambda-a)^{-1} (\lambda-a) c_n 
		\to 
		0
		,
	\end{align*} 
	which is a contradiction. 
	
	\EquivalentProof{prop:spectrum-of-element:itm:spec}{prop:spectrum-of-element:itm:ideal-right}
	We can turn $A$ into a new unital Banach algebra $\tilde A$ by endowing it with the 
	new multplication $\star$ given by $a \star b \coloneqq ba$ for all $a,b \in A$. 
	Clearly, every element $a \in A$ has the same spectrum in $A$ and in $\tilde A$, 
	and power boundedness of an element is also the same in both algebras.
	So we obtain the claimed equivalence by applying the equivalence of~\ref{prop:spectrum-of-element:itm:spec} 
	and~\ref{prop:spectrum-of-element:itm:ideal} in $\tilde A$.
\end{proof}

Let $A$ be a unital Banach algebra. 
Recall from Remark~\ref{opvsrep} that every $a \in A$ corresponds to a semigroup representation of $\N = \{0,1,2,\dots\}$ given by 
$n \mapsto a^n$ and that the unitary dual $\unitaryDual{\N}$ is isomorphic (as a topological group) to the unit cirlce $\T$. 
Hence, it is natural to define the \notion{unitary spectrum} of a bounded semigroup representation $T \colon S \to A$ 
as a subset of the unitary dual $\unitaryDual{S}$.  
In fact, we will now show that the assertions~\ref{prop:spectrum-of-element:itm:approx}--\ref{prop:spectrum-of-element:itm:laplace} 
from Proposition~\ref{prop:spectrum-of-element} are still equivalent when they are adapted to the setting of 
general bounded semigroup representations $T$. 
This motivates how we define the spectrum of $T$ in Definition~\ref{def:spectrum}. 

\begin{proposition}
	\label{prop:charspec}
	Let $T \colon S \rightarrow A$ be a bounded representation in a unital Banach algebra $A$. 
	For every unitary character $\chi \in \unitaryDual{S}$ the following are equivalent.
	\begin{enumerate}[(a)]
		\item\label{prop:charspec:itm:approx} 
		There is a net $(c_j)$ in $A$ with $\|c_j\| = 1$ for every $j$ such that
		\begin{align*}
			 (\chi(s) - T_s)c_j \overset{j}{\to} 0
		\end{align*}
		for every $s \in S$.
		
		\item\label{prop:charspec:itm:ideal} 
		The left ideal in $A$ generated by the set $\{\chi(s) - T_s \colon s \in S\}$ is not equal to $A$, 
		i.e., there are no $a_1, \dots, a_n \in A$ and $s_1, \dots, s_n \in S$ such that
		\begin{align*}
			\sum_{k=1}^n a_k (\chi(s_k) - T_{s_k}) = \one.
		\end{align*}
		
		\item\label{prop:charspec:itm:ideal-right} 
		The right ideal in $A$ generated by the set $\{\chi(s) - T_s \colon s \in S\}$ is not equal to $A$, 
		i.e., there are no $a_1, \dots, a_n \in A$ and $s_1, \dots, s_n \in S$ such that
		\begin{align*}
			\sum_{k=1}^n (\chi(s_k) - T_{s_k}) a_k = \one.
		\end{align*}
		
		\item\label{prop:charspec:itm:char-all}
		For every closed commutative unital subalgebra $B$ of $A$ that satisfies $T(S) \subseteq B$ 
		there exists an algebra character $\psi \in \Gamma_B$ such that $\chi = \psi \circ T$.
		
		\item\label{prop:charspec:itm:char-exists} 
		There exists a closed commutative unital subalgebra $B$ of $A$ that satisies $T(S) \subseteq B$ 
		and an algebra character $\psi \in \Gamma_B$ such that $\chi = \psi \circ T$.
		
		\item\label{prop:charspec:itm:laplace} 
		For all $s_1, \dots, s_n \in S$ and $\beta_1, \dots, \beta_n \in \C$ one has 
		\begin{align*}
			\modulus{\sum_{k=1}^n \beta_k \chi(s_k)} 
			\le 
			\norm{\sum_{k=1}^n \beta_k T_{s_k}}
			.
		\end{align*}
	\end{enumerate}
\end{proposition}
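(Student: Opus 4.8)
The plan is to prove the equivalences by establishing a cycle of implications that mirrors the single-operator case of Proposition~\ref{prop:spectrum-of-element}, but with the polynomial calculus replaced by linear combinations of the operators $T_{s}$ indexed by $S$, and with sequences replaced by nets. Concretely, I would aim for the cycle \ref{prop:charspec:itm:approx}$\Rightarrow$\ref{prop:charspec:itm:ideal}$\Rightarrow$\ref{prop:charspec:itm:char-all}$\Rightarrow$\ref{prop:charspec:itm:char-exists}$\Rightarrow$\ref{prop:charspec:itm:laplace}$\Rightarrow$\ref{prop:charspec:itm:approx}, together with the separate equivalence \ref{prop:charspec:itm:approx}$\Leftrightarrow$\ref{prop:charspec:itm:ideal-right} obtained by passing to the opposite algebra $\tilde A$ with multiplication $a\star b\coloneqq ba$, exactly as in the proof of the proposition above. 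The net indexed version of~\ref{prop:charspec:itm:approx} (rather than a sequence) is forced here because $S$ is an arbitrary directed set, so convergence must be understood with respect to a common index net; I would take the index set to be pairs $(F,\varepsilon)$ with $F\subseteq S$ finite and $\varepsilon>0$, directed in the obvious way.

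For \ref{prop:charspec:itm:approx}$\Rightarrow$\ref{prop:charspec:itm:ideal} the argument is essentially unchanged from the scalar case: if $\sum_{k=1}^n a_k(\chi(s_k)-T_{s_k})=\one$ and $(c_j)$ is normalized with $(\chi(s)-T_s)c_j\to 0$ for each $s$, then multiplying by $c_j$ gives $c_j=\sum_k a_k(\chi(s_k)-T_{s_k})c_j\to 0$, contradicting $\|c_j\|=1$. For \ref{prop:charspec:itm:ideal}$\Rightarrow$\ref{prop:charspec:itm:char-all} I would fix a closed commutative unital $B$ with $T(S)\subseteq B$, observe that the ideal of $B$ generated by $\{\chi(s)-T_s:s\in S\}$ is proper (it cannot contain $\one$, by~\ref{prop:charspec:itm:ideal}), extend it to a maximal ideal, which by \cite[Theorem~1.3.3]{Murr1990} is the kernel of some $\psi\in\Gamma_B$, and then deduce $\psi(T_s)=\chi(s)$ for every $s$; since $\psi$ is multiplicative and $T$ is a monoid homomorphism, $\psi\circ T$ is a semigroup character agreeing with $\chi$, so $\chi=\psi\circ T$. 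The implication \ref{prop:charspec:itm:char-all}$\Rightarrow$\ref{prop:charspec:itm:char-exists} is immediate once one notes that the closed unital subalgebra generated by $T(S)$ is commutative (because $S$ is commutative and $T$ is a homomorphism), and \ref{prop:charspec:itm:char-exists}$\Rightarrow$\ref{prop:charspec:itm:laplace} follows from $|\psi|\le 1$ on the unit ball exactly as before: $\bigl|\sum_k\beta_k\chi(s_k)\bigr|=\bigl|\psi(\sum_k\beta_k T_{s_k})\bigr|\le\bigl\|\sum_k\beta_k T_{s_k}\bigr\|$.

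The step I expect to be the genuine obstacle is \ref{prop:charspec:itm:laplace}$\Rightarrow$\ref{prop:charspec:itm:approx}, since this is where the single-operator proof used the explicit Cesàro polynomials $p_n(z)=\frac1n\sum_{k=0}^{n-1}\overline\lambda^k z^k$, and there is no analogous single-generator averaging available for a general semigroup. The natural substitute is a Følner-type or averaging net: for a finite set $F\subseteq S$ one would like to build an almost-invariant average of the $\overline{\chi(s)}T_s$ whose ``defect'' $(\chi(s)-T_s)$ times the average is small for each $s\in F$. Rather than constructing such averages by hand, I would instead try to run the argument abstractly: consider, for each finite $F\subseteq S$ and each $\varepsilon>0$, the element $u_{F}\coloneqq\overline{\chi(s_1)}T_{s_1}\cdots$ built from products, or more robustly, use~\ref{prop:charspec:itm:laplace} to show that the constant $\one$ lies in the norm-closure of the set $\{\sum_k\beta_k T_{s_k}:\sum_k\beta_k\chi(s_k)=1\}$ shifted appropriately, and extract from this a net of normalized $c_{F,\varepsilon}$ with $\|(\chi(s)-T_s)c_{F,\varepsilon}\|<\varepsilon$ for all $s\in F$. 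The key inequality~\ref{prop:charspec:itm:laplace} guarantees that the linear functional $\sum_k\beta_k T_{s_k}\mapsto\sum_k\beta_k\chi(s_k)$ is well-defined and contractive on the span of $T(S)$, and a Hahn--Banach plus weak*-compactness argument (or a direct Følner averaging over the directed set $S$, exploiting that $S+s=S$ up to the almost-invariance needed) then yields the approximate eigen-net. Making this averaging precise for an arbitrary commutative directed semigroup, without any topological or amenability hypotheses beyond commutativity, is the crux; I would lean on the fact that commutative semigroups are amenable to produce the required almost-invariant finite averages.
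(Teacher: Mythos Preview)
Your scheme matches the paper's almost exactly: the same cycle \ref{prop:charspec:itm:approx}$\Rightarrow$\ref{prop:charspec:itm:ideal}$\Rightarrow$\ref{prop:charspec:itm:char-all}$\Rightarrow$\ref{prop:charspec:itm:char-exists}$\Rightarrow$\ref{prop:charspec:itm:laplace}$\Rightarrow$\ref{prop:charspec:itm:approx}, with~\ref{prop:charspec:itm:ideal-right} handled via the opposite algebra. Two points deserve sharpening.

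For \ref{prop:charspec:itm:laplace}$\Rightarrow$\ref{prop:charspec:itm:approx} the paper does precisely what you are reaching for, but packages it cleanly via the notion of an \emph{ergodic net} (Lemma~\ref{ergodicnet}): every bounded representation admits a net $(d_j)\subseteq\overline{\mathrm{co}}\,T(S)$ with $(\one-T_s)d_j\to 0$ for each $s$. One applies this to the rotated representation $\overline{\chi}T$; condition~\ref{prop:charspec:itm:laplace} with $\beta_k=\mu_k\overline{\chi}(s_k)$ for convex coefficients $\mu_k$ forces $\|d_j\|\ge 1$ for every $j$, and normalizing yields the net required in~\ref{prop:charspec:itm:approx}. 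The existence of ergodic nets (taken from Krengel) is exactly the amenability input you invoke, so no separate Hahn--Banach or F{\o}lner construction is needed.

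For the opposite-algebra trick, beware that condition~\ref{prop:charspec:itm:approx} is \emph{not} invariant under passing to $\tilde A$: it becomes a \emph{right} approximate-eigenvector condition $c_j(\chi(s)-T_s)\to 0$, so you cannot read off \ref{prop:charspec:itm:approx}$\Leftrightarrow$\ref{prop:charspec:itm:ideal-right} directly. The paper instead proves \ref{prop:charspec:itm:char-all}$\Leftrightarrow$\ref{prop:charspec:itm:ideal-right}, since~\ref{prop:charspec:itm:char-all} reads identically in $A$ and in $\tilde A$ (commutative unital subalgebras coincide in both). Any of~\ref{prop:charspec:itm:char-all}, \ref{prop:charspec:itm:char-exists}, \ref{prop:charspec:itm:laplace} would serve equally well as the pivot.
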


Our proof that~\ref{prop:charspec:itm:approx} implies~\ref{prop:charspec:itm:ideal} 
is inspired by the one of~\cite[Proposition 2.2]{BaPh1992}. 
We need the following terminology and the subsequent lemma. 
An \notion{ergodic net} for a representation $T \colon S \rightarrow A$ in a unital Banach algebra 
is a net $(c_j) \subseteq \overline{\mathrm{co}}\, T(S) \subseteq A$ such that 
\begin{align*}
	\lim_j \, (\mathbbm{1}-T_s)c_j = 0  \text{ for every } s \in S,
\end{align*}
see~\cite[p.\,87]{Kren1985}. 
The following can be shown as in \cite[pp.\,75--76]{Kren1985}. 
	
\begin{lemma}
	\label{ergodicnet}
	Every bounded representation $T \colon S \rightarrow A$ in a unital Banach algebra has an ergodic net.
\end{lemma}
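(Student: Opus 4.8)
The plan is to reduce the statement to the familiar Cesàro argument for a single element and then exploit commutativity of $S$ to control all $s \in S$ simultaneously, following the spirit of \cite[pp.\,75--76]{Kren1985}. For a single $s \in S$ the natural candidate is the Cesàro average
\[
	A_{s,n} \coloneqq \frac{1}{n}\sum_{k=0}^{n-1} T_{ks} = \frac{1}{n}\sum_{k=0}^{n-1}(T_s)^k \in \co\, T(S),
\]
where $ks$ denotes the $k$-fold sum of $s$ (so $0\cdot s = 0$ and $T_{ks} = (T_s)^k$, since $T$ is a representation). A telescoping computation gives $(\one - T_s)A_{s,n} = \frac{1}{n}(\one - T_{ns})$, whose norm is at most $(1+M)/n$ with $M \coloneqq \sup_{s \in S}\norm{T_s} < \infty$; hence $(\one - T_s)A_{s,n} \to 0$ as $n \to \infty$. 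This settles the one-generator case but only controls the single element $s$.

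To treat finitely many elements at once I would pass to products of such averages. Since $S$ is commutative all operators $T_t$ commute, and one checks (using $T_a T_b = T_{a+b}$) that a product of convex combinations of elements of $T(S)$ is again a convex combination of elements of $T(S)$; hence for any finite $F = \{s_1,\dots,s_m\} \subseteq S$ the product $A_{s_1,n}\cdots A_{s_m,n}$ lies in $\co\, T(S) \subseteq \overline{\co}\, T(S)$. Commutativity lets me factor out the relevant average in front: for $s_i \in F$,
\[
	(\one - T_{s_i})\,A_{s_1,n}\cdots A_{s_m,n} = \Big(\prod_{j \ne i} A_{s_j,n}\Big)\,\frac{1}{n}\big(\one - T_{n s_i}\big),
\]
so that $\norm{(\one - T_{s_i})\,A_{s_1,n}\cdots A_{s_m,n}} \le M^{m-1}(1+M)/n$.

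Finally I would assemble these into a net indexed by the finite subsets $F \subseteq S$, directed by inclusion. The one genuine difficulty is that the factor $M^{m-1}$ grows with $m = \modulus{F}$, so a single fixed averaging length $n$ does not suffice; the remedy is to let $n$ depend on $F$, choosing $n_F$ so large (for instance $n_F \ge (2M)^{\modulus{F}}\modulus{F}$, assuming $M \ge 1$ without loss of generality) that the bound $M^{\modulus{F}-1}(1+M)/n_F$ tends to $0$ as $\modulus{F} \to \infty$. Setting $c_F \coloneqq \prod_{s \in F} A_{s,n_F}$ (and $c_\emptyset \coloneqq \one = T_0$), I then fix $s \in S$ and $\varepsilon > 0$ and pick a finite $F_0 \ni s$ with $\modulus{F_0}$ large enough to force the bound below $\varepsilon$; since that bound is decreasing in $\modulus{F}$, every $F \supseteq F_0$ satisfies $\norm{(\one - T_s)c_F} < \varepsilon$. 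This yields $(\one - T_s)c_F \to 0$ along the net for each $s$, so $(c_F)_F \subseteq \overline{\co}\, T(S)$ is the desired ergodic net. The step needing the most care is exactly this simultaneous control over all $s$, that is, coupling the averaging length $n_F$ to the size of $F$ in order to absorb the growth of $M^{\modulus{F}-1}$.
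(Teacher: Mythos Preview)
Your approach via products of Cesàro averages is the standard one and matches what the paper points to in \cite[pp.\,75--76]{Kren1985} (the paper itself gives no argument beyond that citation). There is, however, a small gap in your indexing. You index the net by the finite subsets $F \subseteq S$ alone, coupling $n_F$ to $\modulus{F}$ so that the error bound decays as $\modulus{F} \to \infty$, and then ``pick a finite $F_0 \ni s$ with $\modulus{F_0}$ large enough''. This tacitly assumes $S$ is infinite. If $S$ is finite---which the standing assumptions allow, cf.\ Example~\ref{exa:klein-four}---your directed set has the maximum element $F = S$, the net becomes eventually constant at $c_S$, and $(\one - T_s)c_S$ need not vanish.

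The repair is immediate: enlarge the index set to $\mathscr{P}_{\mathrm{fin}}(S) \times \N$ with the product order and put $c_{(F,k)} \coloneqq \prod_{s \in F} A_{s,\,k\lceil (2M)^{\modulus{F}}\rceil}$. Then for $s \in F$ one has $\norm{(\one - T_s)c_{(F,k)}} \le 2/(k\,2^{\modulus{F}}) \le 2/k$, so for given $s$ and $\varepsilon > 0$ any index $(F_0, k_0)$ with $s \in F_0$ and $2/k_0 < \varepsilon$ does the job, independently of whether $S$ is finite. (Incidentally, $M \ge 1$ is automatic since $\norm{T_0} = \norm{\one} \ge 1$, so no ``without loss of generality'' is needed there.)
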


Let us now show Proposition~\ref{prop:charspec}. 

\begin{proof}[Proof of \cref{prop:charspec}]
	\ImpliesProof{prop:charspec:itm:approx}{prop:charspec:itm:ideal} 
	Let $(c_j)$ be a net of normalized elements in $A$ that satisfies $(\chi(s) - T_s)c_j \to 0$ for each $s \in S$.
	If there are $a_1, \dots, a_n \in A$ and $s_1, \dots, s_n \in S$ such that
	\begin{align*}
		\sum_{k=1}^n a_k (\chi(s_k) - T_{s_k}) = \one,
	\end{align*}
	then multiplying this equality from the right with $c_j$ yields $c_j \to 0$, a contradiction.

	\ImpliesProof{prop:charspec:itm:ideal}{prop:charspec:itm:char-all} 
	Consider a closed commutative unital subalgebra $B$ that satisfies $T(S) \subseteq B$. 
	It follows from~\ref{prop:charspec:itm:ideal} 
	that the ideal in $B$ generated by the set $\{\chi(s) - T_s \colon s \in S\}$ is not equal to $B$, 
	so it is contained in a maximal ideal $I$ of $B$.
	Hence, there exists an algebra character $\psi_B \in \Gamma_B$ that vanishes on $I$, see e.g.~\cite[Theorem 1.3.3]{Murr1990}.
	Therefore, one has $\psi(\chi(s) - T_s) = 0$ and thus $\psi(T_s) = \chi(s)$ for each $s \in S$.

	\ImpliesProof{prop:charspec:itm:char-all}{prop:charspec:itm:char-exists} 
	This implication is clear since $T(S)$ is commutative and thus contained in a closed unital subalgebra of $A$.

	\ImpliesProof{prop:charspec:itm:char-exists}{prop:charspec:itm:laplace} 
	Assume that \ref{prop:charspec:itm:char-exists} holds and choose $B$ and $\psi \in \Gamma_B$ as in \ref{prop:charspec:itm:char-exists}. 
	For all $s_1, \dots, s_n \in S$ and $\beta_1, \dots, \beta_n \in \C$ one then gets
	\begin{align*}
		\modulus{\sum_{k=1}^n \beta_k \chi(s_k)} 
		= 
		\modulus{\sum_{k=1}^n \beta_k \psi(T_{s_k})} 
		\le 
		\norm{\sum_{k=1}^n \beta_k T_{s_k}}
	\end{align*}
	since $\norm{\psi} \le 1$.

	\ImpliesProof{prop:charspec:itm:laplace}{prop:charspec:itm:approx}
	For all convex coefficients $\mu_1, \dots, \mu_n$ and all $s_1, \dots, s_n$, 
	applying~\ref{prop:charspec:itm:laplace} 
	to the coefficients $\beta_k \coloneqq \mu_k \overline{\chi}(s_k)$ for $k \in \{1, \dots, n\}$ gives
	\begin{align*}
		1 \le \norm{\sum_{k=1}^n \mu_k \overline{\chi}(s_k) T_{s_k}}.
	\end{align*} 
	So if $(d_j) \subseteq A$ is an ergodic net for the rotated representation $\overline{\chi} T \colon S \rightarrow A$ 
	-- which exists by \cref{ergodicnet} -- 
	then $1 \le \norm{d_j}$ for each index $j$.
	Moreover,
	\begin{align*}
		\|(\chi(s) - T_s)d_j)\| = \|(\mathbbm{1} - (\overline{\chi}T)_s)d_j)\| \rightarrow 0
	\end{align*}
	for every $s \in S$. 
	By setting $c_j \coloneqq d_j/\norm{d_j}$ for each $j$ 
	we thus obtain the desired net in~\ref{prop:charspec:itm:approx}.

	\EquivalentProof{prop:charspec:itm:char-all}{prop:charspec:itm:ideal-right} 
	We turn $A$ into a new unital Banach algebra $\tilde A$ by endowing it with the 
	multplication $\star$ defined by $a \star b \coloneqq ba$ for all $a,b \in A$. 
	Since $S$ is commutative, $T$ is also a representation of $S$ in $\tilde A$. 
	Property~\ref{prop:charspec:itm:char-all} of $T$ within the algebra $A$ 
	is equivalent to the same property within the algebra $\tilde A$. 
	Hence, we get the claimed equivalence by applying the equivalence 
	of~\ref{prop:charspec:itm:char-all} and~\ref{prop:charspec:itm:ideal} in the algebra $\tilde A$.
\end{proof}

Propositions~\ref{prop:spectrum-of-element} and~\ref{prop:charspec} motivate the following definition 
of the unitary spectrum of a bounded semigroup representation.

\begin{definition}
	\label{def:spectrum}
	Let $A$ be a unital Banach algebra and $T \colon S \rightarrow A$ a bounded representation. 
	The \notion{unitary spectrum} of $T$ is the set $\sigma_{\unitary}(T)$ 
	of all unitary semigroup characters $\chi \in \unitaryDual{S}$ 
	that satisfy the equivalent conditions of \cref{prop:charspec}.
\end{definition}

\begin{remark}\label{rem:compare-lit}
	Generalizations of spectral theory to semigroup and group representations are a classical theme in operator theory:
	\begin{enumerate}[(i)]
		\item 
		For a locally compact Abelian group $G$ and a Banach space $E$, 
		Arveson defined the spectrum of a bounded representation $T \colon G \to \mathscr{L}(E)$
		by lifting $T$ to a representation of the convolutional Banach algebra $L^1(G)$ 
		and using the Fourier transform \cite[Definition~2.1]{Arveson74}.
		
		A definition that also applies to non-bounded group representations was given in terms of approximate eigenvectors 
		by Lyubich \cite{Lyubich71} (see also \cite{LyMaFe73}). 
		This is close in spirit to property~\ref{prop:charspec:itm:approx} in \cref{prop:charspec} 
		and also to Proposition~\ref{charspec2} below.
		
		Those references focus on the case of group representations, while our main interest is in semigroup representations.
		
		\item 
		Wolff used non-standard analysis to study \notion{Riesz points} of the spectrum \cite[Sections~5 and~6]{Wolff1984}. 
		This is related to our Subsections~\ref{upsubsec}, \ref{subsec:quasi-compact} and~\ref{subsec:nisa}, 
		although the focus in \cite{Wolff1984} is also on group rather than semigroup representations.
		
		\item
		Spectral theory for positive group representations on Banach lattices 
		-- in other words, \notion{Perron--Frobenius theory} for group representations -- 
		was studied by Greiner and Groh in \cite{GrGr83} and by Wolff in \cite[Section~6]{Wolff1984}.
		A similar theme occurs for semigroups in Subsection~\ref{subsec:nisa}.
		
		\item 
		For a representation of a commutative Banach algebra $B$ in a Banach space $E$, 
		three different spectra $\Lambda_1$, $\Lambda_2$, and $\Lambda_3$ 
		are compared by Domar and Lindahl in \cite{DoLi75}. 
		If one lifts a representation $T \colon G \to \mathscr{L}(E)$ of a locally compact Abelian group $G$ 
		to a representation $L^1(G) \to \mathscr{L}(E)$ of the convolutional algebra $B \coloneqq L^1(G)$, 
		the spectrum $\Lambda_1$ from \cite{DoLi75} 
		agrees with the aforementioned one defined by Arveson \cite[Definition~2.1]{Arveson74}. 
		The second spectrum $\Lambda_2$ from \cite{DoLi75} is obtained 
		as a subset of the first one by imposing an additional boundedness condition. 
		The third spectrum $\Lambda_3$ is defined in terms of algebra characters on $B$ and approximate eigenvectors. 
		
		\item 
		The spectrum of bounded semigroup -- rather than group -- representations was studied 
		by Batty and Phóng \cite{BaPh1992} and by Huang \cite{Huang95}.
		Batty and Phóng were mainly interested in the long-term behaviour of representations with respect to the strong operator topology. 
		Huang has several notions and results about asymptotics with respect to the operator norm 
		that are closer to the contents of the present article; 
		see in particular our comments after \cref{def:pole-riesz} and \cref{chartotallyuniform}.
		
		The assumptions on the semigroup $S$ in \cite{BaPh1992, Huang95} are somewhat different from ours. 
		On the one hand, the authors of \cite{BaPh1992, Huang95} allow $S$ to carry a topology 
		and are thus able to take consequences of strong continuity of representations into account. 
		On the other hand, they assume $S$ to be embedded into a locally compact Abelian group $G$ and to satisfy $S-S = G$, 
		along with a topological and a measure theoretic assumption. 
		Under similar conditions, spectral theory was later studied also for unbounded semigroup representations 
		by Basit and Pryde in \cite{BaPr04}.
		
		In our case -- where $S$ carries the discrete topology -- it is easy to check that the assumptions from \cite{BaPh1992, Huang95}
		are satisfied if and only if $S$ embeds algebraically into a group, i.e., if and only if $S$ is cancellative 
		(see the discussion after Standing Assumption~\ref{assu:standing}). 
		In this respect, our setting is more general since we do not need $S$ to be cancellative 
		and hence, representations as the one in Example~\ref{exa:intersection-sg} below are covered by our theory. 
		
		The spectrum is defined in \cite{BaPh1992, Huang95} in terms of the Fourier transform on $G$. 
		If our semigroup $S$ is cancellative
		one can apply the definition of the unitary spectrum from \cite{BaPh1992} and \cite{Huang95} 
		(note that in the latter reference, the unitary spectrum is called \notion{peripheral spectrum}).
		It follows from \cref{prop:charspec}\ref{prop:charspec:itm:laplace} that this definition then coincides with ours.
	\end{enumerate}
\end{remark}

The simplest example in which our spectral theoretic notion can be applied is, of course, 
a representation of the additive semigroup $\N$:

\begin{example}
	Let $a \in A$ be a power-bounded element of a unital Banach algebra $A$ 
	and $T_a \colon \N \rightarrow A$ the induced semigroup representation from \cref{opvsrep}. 
	By \cref{prop:spectrum-of-element} 
	the canonical isomorphism $\unitaryDual{\N} \rightarrow \T$ 
	restricts to a homeomorphism $\sigma_{\unitary}(T_a) \rightarrow \sigma_{\unitary}(a)$.
\end{example}

Further and more interesting examples will be discussed throughout the article.

Note that the unitary spectrum of a bounded representation $T \colon S \rightarrow A$ 
can be computed in any closed unital subalgebra $B$ of $A$ containing $T(S)$ and does not depend on the choice of $B$.
We write $\mathrm{A}(T(S))$ for the smallest closed unital subalgebra containing $T(S)$ 
and note that $\mathrm{A}(T(S))$ is always commutative. 
The elements of the unitary spectrum $\sigma_{\unitary}(T)$ are in one-to-one correspondence 
with those algebra characters on $\mathrm{A}(T(S))$ that only take on unitary values on $T(S)$:

\begin{proposition}
	\label{homeomorph}
	For a bounded representation $T \colon S \rightarrow A$ in a unital Banach algebra $A$ the map
	\begin{align*}
		\big\{\psi \in \Gamma_{\mathrm{A}(T(S))}\mid |\psi(T_s)| = 1 \text{ for all } s \in S \big\} 
		& \rightarrow 
		\sigma_{\unitary}(T), \\ 
		\psi 
		& \mapsto 
		\psi \circ T
	\end{align*}
	is a homeomorphism. 
	In particular, $\sigma_{\unitary}(T)$ is a closed subset of $\unitaryDual{S}$.
\end{proposition}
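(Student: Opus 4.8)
The plan is to verify, in turn, that the map $\psi \mapsto \psi \circ T$ is well defined, bijective, and continuous, and then to upgrade continuity to a homeomorphism by a compactness argument. Write $B \coloneqq \mathrm{A}(T(S))$ and let $\Phi$ denote the domain $\{\psi \in \Gamma_B : |\psi(T_s)| = 1 \text{ for all } s \in S\}$. For well-definedness, note first that for $\psi \in \Gamma_B$ the composition $\psi \circ T$ is a semigroup character, since $\psi$ is multiplicative and $\psi(\one) = 1$; the condition $|\psi(T_s)| = 1$ then says precisely that $\psi \circ T$ is unitary, i.e.\ lies in $\unitaryDual{S}$. That $\psi \circ T$ actually belongs to $\sigma_{\unitary}(T)$ is exactly the content of condition~\ref{prop:charspec:itm:char-exists} in \cref{prop:charspec}, applied with the subalgebra $B$. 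Surjectivity runs in the opposite direction: given $\chi \in \sigma_{\unitary}(T)$, condition~\ref{prop:charspec:itm:char-all} of \cref{prop:charspec} (applied to $B$) furnishes some $\psi \in \Gamma_B$ with $\chi = \psi \circ T$, and since $\chi$ is unitary we have $|\psi(T_s)| = |\chi(s)| = 1$, so $\psi \in \Phi$.

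For injectivity, I would use that two algebra characters agreeing on $T(S)$ must agree on all of $B$: multiplicativity and linearity force them to agree on the unital algebra generated by $T(S)$, and boundedness of characters then lets this extend to the closure $B$ by density. Continuity is immediate from the definitions of the two topologies: if $\psi_j \to \psi$ in the weak*-topology of $\Gamma_B$, then in particular $\psi_j(T_s) \to \psi(T_s)$ for each $s \in S$, which is exactly pointwise convergence $\psi_j \circ T \to \psi \circ T$ in $\unitaryDual{S}$.

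It remains to promote this continuous bijection to a homeomorphism, and here I would invoke the standard fact that a continuous bijection from a compact space onto a Hausdorff space is automatically a homeomorphism. The target $\unitaryDual{S}$ carries the topology of pointwise convergence and is therefore Hausdorff, so its subspace $\sigma_{\unitary}(T)$ is Hausdorff as well. For the source, I would check that $\Phi$ is closed in the compact space $\Gamma_B$: each map $\psi \mapsto |\psi(T_s)|$ is weak*-continuous, so each set $\{|\psi(T_s)| = 1\}$ is closed, and $\Phi$ is the intersection of these over $s \in S$; being a closed subset of a compact space, $\Phi$ is compact. The compactness argument then gives the homeomorphism, and the final assertion follows at once: the image $\sigma_{\unitary}(T)$ is the continuous image of the compact set $\Phi$, hence compact, hence closed in the Hausdorff space $\unitaryDual{S}$.

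As for difficulty, none of the steps is genuinely hard; the only points requiring a little care are choosing the correct equivalent formulations from \cref{prop:charspec} (the ``for all subalgebras'' form~\ref{prop:charspec:itm:char-all} for surjectivity and the ``there exists'' form~\ref{prop:charspec:itm:char-exists} for well-definedness), and recognizing that the real work hidden in the word ``homeomorphism'' is carried by the compactness of $\Phi$ together with the Hausdorff property of $\unitaryDual{S}$, rather than by any explicit description of the inverse map.
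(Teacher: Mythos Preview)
Your proof is correct and follows essentially the same route as the paper's own argument: verify injectivity, surjectivity, and continuity, then invoke the compact-to-Hausdorff lemma. In fact you supply more detail than the paper does---the paper declares injectivity and continuity ``clear'' and does not explicitly check that the domain $\Phi$ is closed in $\Gamma_B$ (only that $\Gamma_B$ itself is compact), whereas you spell out both points.
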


\begin{proof}
	It is clear that the map is injective and continuous. 
	Moreover, it is surjective by definition of $\sigma_\unitary(T)$. 
	Since the Gelfand space $\Gamma_{\mathrm{A}(T(S))}$ is compact and $\sigma_\unitary(T) \subseteq \unitaryDual{S}$ is a Hausdorff space, 
	the map is a homeomorphism. 
	Therefore, $\sigma_{\unitary}(T)$ is also compact and thus closed in the Hausdorff space $\unitaryDual{S}$.
\end{proof}

A very similar result to \cref{homeomorph}, though under somewhat different assumptions on $S$, 
can be found in \cite[Proposition~2.4]{BaPh1992}.

Let $T \colon S \to A$ is a bounded representation in a unital Banach algebra $A$. 
If a semigroup character $\chi \in \unitaryDual{S}$ is in the unitary spectrum $\sigma_\unitary(T)$, 
then it follows from \cref{prop:charspec}\ref{prop:charspec:itm:ideal}  
that $\chi(s) \in \sigma(T(s))$ for each $s \in S$ 
(where $\sigma(T(s)) \subseteq \C$ denotes the usual spectrum of the element $T(s) \in A$).
It is natural to ask whether the converse is true: 
If $\chi \in \unitaryDual{S}$ satisfies $\chi(s) \in \sigma(T(s))$ for each $s \in S$, 
does it follow that $\chi \in \sigma_\unitary(T)$? 

The following example show that the answer is negative. 
Another counterexample -- where $S$ is even finite and $A$ is finite dimensional -- 
is given in Example~\ref{exa:klein-four} below.

\begin{example}
	\label{exa:functions-on-circle}
	Let $A$ be the unital commutative Banach algebra $\mathrm{C}(\T)$ (which is even a $\mathrm{C}^*$-algebra) 
	and let $S$ denote the additive semigroup $\N \times \N$. 
	Let the elements $f,g \in A$ be given by 
	\begin{align*}
		f(z) = z 
		\qquad \text{and} \qquad 
		g(z) = -z
	\end{align*}
	for all $z \in \T$ and define the bounded representation 
	\begin{align*}
		T \colon S     & \to A          , \\
		         (m,n) & \mapsto f^m g^n.
	\end{align*}
	Consider the constant semigroup character $\mathds{1}_S \in \unitaryDual{S}$. 
	For each $s = (m,n) \in S$ one has $\mathds{1}_S(s) = 1 \in \sigma(f^m g^n) = \sigma(T(s))$ 
	since $(f^m g^n)(z) = (-1)^n z^{m+n}$ for all $z \in \T$ and the range of $f^m g^n$ thus contains the point $1$.
	Yet, we now show that $\mathds{1}_S \not\in \sigma_\unitary(T)$. 
	
	Indeed, assume to the contrary that $\mathds{1}_S \in \sigma_\unitary(T)$. 
	By Proposition~\ref{prop:charspec}\ref{prop:charspec:itm:char-all} 
	there exists an algebra character $\psi \in \Gamma_{\mathrm{C}(\T)}$ such that $\mathds{1}_S = \psi \circ T$. 
	Since every algebra character on $\mathrm{C}(\T)$ is a point evaluation, 
	we can thus find a point $z_0 \in \T$ such that $1 = (f^m g^n)(z_0) = (-1)^n z_0^{m+n}$ for all $m,n \in \N$. 
	For $m=1$ and $n=0$ this gives $z_0 = 1$, but for $m=0$ und $n=1$ it gives $z_0 = -1$, 
	so we arrived at a contradiction.
\end{example}

In the next theorem we use \cref{homeomorph}
to characterize the stability of semigroup representations in terms of their unitary spectrum. 
Recall that the semigroup $S$ is a directed set endowed with the relation $\le$ from Standing Assumption~\ref{assu:standing}. 
Hence, every semigroup representation $T$ of $S$ is a net $(T_s)_{s \in S}$ 
and whenever we talk about convergence of $T$ we mean convergence of this net.

\begin{theorem}
	\label{stability}
	For a bounded representation $T \colon S \rightarrow A$ in a unital Banach algebra $A$ the following assertions are equivalent.	
	\begin{enumerate}[(a)]
		\item \label{stabilitya} 
		$\sigma_{\unitary}(T) = \emptyset$.
		
		\item \label{stabilityb} 
		There is an $s \in S$ with $\lim T_s^n = 0$ in $A$.
		
		\item \label{stabilityc} 
		$0 \in \overline{T(S)} \subseteq A$.
		
		\item \label{stability-conv}
		The net $(T_s)_{s \in S}$ converges to $0$ with respect to the operator norm.
		
		\item \label{stabilityd} 
		There is an $s \in S$ with $\|T_s\| < 1$.
	\end{enumerate}
\end{theorem}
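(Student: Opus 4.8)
The plan is to prove the elementary equivalences among \ref{stabilityb}--\ref{stabilityd} by direct norm estimates, to obtain \ref{stabilityd}$\Rightarrow$\ref{stabilitya} as a one-line consequence of \cref{prop:charspec}, and to reserve the real work for the reverse implication \ref{stabilitya}$\Rightarrow$\ref{stabilityb}, which I would deduce from the Gelfand-theoretic description in \cref{homeomorph} together with a compactness argument.

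First the cheap part. The implication \ref{stabilityd}$\Rightarrow$\ref{stabilityb} is immediate from $\norm{T_s^n}\le\norm{T_s}^n$, and \ref{stabilityb}$\Rightarrow$\ref{stabilityc} is immediate because $T_s^n=T_{ns}\in T(S)$; conversely \ref{stabilityc}$\Rightarrow$\ref{stabilityd} holds since $0\in\overline{T(S)}$ forces $\norm{T_s}<1$ for some $s$. To upgrade \ref{stabilityd} to the convergence statement \ref{stability-conv}, I would fix $s_0$ with $\norm{T_{s_0}}<1$ and put $M\coloneqq\sup_{s}\norm{T_s}$; every $s\ge ns_0$ factors as $s=ns_0+r$, so $T_s=T_{s_0}^nT_r$ and $\norm{T_s}\le\norm{T_{s_0}}^nM$. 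Given $\varepsilon>0$, choosing $n$ with $\norm{T_{s_0}}^nM<\varepsilon$ and setting $s^\dagger\coloneqq ns_0$ shows $\norm{T_s}<\varepsilon$ for all $s\ge s^\dagger$, i.e. $(T_s)\to 0$; and \ref{stability-conv}$\Rightarrow$\ref{stabilityc} is trivial. For \ref{stabilityd}$\Rightarrow$\ref{stabilitya} I would argue by contraposition: if some $\chi\in\sigma_\unitary(T)$ existed, then \cref{prop:charspec}\ref{prop:charspec:itm:laplace} applied with a single summand gives $1=\modulus{\chi(s)}\le\norm{T_s}$ for every $s$, contradicting \ref{stabilityd}.

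The crux is \ref{stabilitya}$\Rightarrow$\ref{stabilityb}. I would work in the commutative algebra $B\coloneqq\mathrm{A}(T(S))$ and invoke \cref{homeomorph}: the hypothesis $\sigma_\unitary(T)=\emptyset$ says precisely that no character $\psi\in\Gamma_B$ satisfies $\modulus{\psi(T_s)}=1$ for all $s\in S$. Since power-boundedness forces $\modulus{\psi(T_s)}\le 1$ for every $s$ and every $\psi$ (as $\psi(T_s)^n=\psi(T_{ns})$ stays bounded), this means that for each $\psi$ there is an $s$ with $\modulus{\psi(T_s)}<1$. The sets $U_s\coloneqq\{\psi\in\Gamma_B:\modulus{\psi(T_s)}<1\}$ are therefore an open cover of the compact Gelfand space $\Gamma_B$, so finitely many $U_{s_1},\dots,U_{s_n}$ already cover it.

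The decisive step is then to consolidate these finitely many elements into a single one: putting $s^*\coloneqq s_1+\dots+s_n$, multiplicativity gives $\modulus{\psi(T_{s^*})}=\prod_{k}\modulus{\psi(T_{s_k})}$ for every $\psi$, and since all factors are $\le 1$ while at least one is $<1$, we obtain $\modulus{\psi(T_{s^*})}<1$ for every $\psi\in\Gamma_B$. As $\psi\mapsto\modulus{\psi(T_{s^*})}$ is continuous on the compact space $\Gamma_B$, its maximum is attained and is strictly less than $1$; since $B$ is commutative this maximum equals $\spr(T_{s^*})$. Hence $\spr(T_{s^*})<1$, so $\norm{T_{s^*}^n}^{1/n}\to\spr(T_{s^*})<1$ and therefore $T_{s^*}^n\to 0$, which is exactly \ref{stabilityb}. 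I expect this consolidation via the semigroup sum $s^*$, turning a pointwise-in-$\psi$ strict inequality into a uniform spectral-radius bound, to be the main obstacle and the key idea of the proof; the remaining implications are routine.
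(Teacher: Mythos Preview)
Your proposal is correct and follows essentially the same route as the paper: the elementary chain among \ref{stabilityb}--\ref{stabilityd} and \ref{stability-conv} is handled by direct norm estimates, \ref{stabilityd}$\Rightarrow$\ref{stabilitya} via a single-term application of \cref{prop:charspec} (the paper uses \cref{homeomorph} instead, which is equivalent here), and the main implication \ref{stabilitya}$\Rightarrow$\ref{stabilityb} by the identical Gelfand-space compactness argument---finite cover, consolidation via $s^*=s_1+\dots+s_n$, and $\spr(T_{s^*})<1$. The only differences are cosmetic (you phrase the compactness step as an explicit open cover $U_s$, and your cycle through the easy implications is arranged slightly differently), so there is nothing to add.
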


\begin{proof}
	\ImpliesProof{stabilitya}{stabilityb}
	First note that, by the boundedness of $T$, 
	one has $\modulus{\psi(T_{s})} \le 1$ for every $\psi \in \Gamma_{\mathrm{A}(T(S))}$ and every $s \in S$. 
	Now assume that~\ref{stabilitya} holds. 
	Then it follows from \cref{homeomorph} and from the compactness of the Gelfand space $\Gamma_{\mathrm{A}(T(S))}$ 
	that we can find $s_1, \dots, s_m \in S$ such that $\min_{i=1, \dots, m} \modulus{\psi(T_{s_i})} < 1$ 
	for every $\psi \in \Gamma_{\mathrm{A}(T(S))}$. 
	For $s \coloneqq s_1 + \dots + s_m \in S$ we then have 
		\begin{align*}
			\lim_{n \rightarrow \infty} \|T_s^n\|^{\frac{1}{n}} 
			= 
			\spr(T_s)
			=
			\max \big\{ \modulus{\psi(T_s)} \mid \psi \in  \Gamma_{\mathrm{A}(T(S))} \big\} < 1,
		\end{align*}
	which implies $\lim_{n \rightarrow \infty} T_s^n = 0$ in $A$. 

	\ImpliesProof{stabilityb}{stabilityc}
	This implication is obvious. 
	
	\ImpliesProof{stabilityc}{stability-conv}
	As $T$ is bounded there exists a number $M \ge 0$ such that $\lVert T_s \rVert \le M$ for all $s \in S$. 
	Now let $\varepsilon > 0$. 
	By~\ref{stabilityc} there exists an $s_0 \in S$ such that $\lVert T_{s_0} \rVert \le \varepsilon$. 
	Hence one has $\lVert T_{s_0+r} \rVert \le M \varepsilon$ for all $r \in S$, 
	so $\lVert T_s \rVert \le M \varepsilon$ for all $s \ge s_0$.
	
	\ImpliesProof{stability-conv}{stabilityd}
	This implication is obvious.
	
	\ImpliesProof{stabilityd}{stabilitya}
	Assume that~\ref{stabilityd} holds and take $s \in S$ with $\|T_s\| < 1$. 
	Then $|\psi(T_s)| \leq \|T_s\| <1$ for every $\psi \in \Gamma_{\mathrm{A}(T(S))}$ 
	and hence $\sigma_\unitary(T) = \emptyset$ by \cref{homeomorph}.
\end{proof}

We now focus on bounded representations of $S$ on Banach spaces. 
In this case, the elements of the unitary spectrum can also be characterized as follows.

\begin{proposition}
	\label{charspec2}
	Let $E$ be a Banach space. 
	For a bounded representation $T \colon S \rightarrow \mathscr{L}(E)$ 
	and a unitary semigroup character $\chi \in \unitaryDual{S}$ the following are equivalent.
	\begin{enumerate}[(a)]
		\item \label{charspec2a} 
		$\chi \in \sigma_\unitary(T)$.
		
		\item \label{charspec2b} 
		There is a net $(x_j)_{j \in J}$ in $E$ with $\|x_j\| = 1$ for all $j \in J$ such that
		\begin{align*}
			\lim_{j \in J} \|\chi(s)x_j - T_sx_j\| = 0 \text{ for every } s \in S.
		\end{align*}

		\item\label{charspec2c} 
		For every finite subset $\emptyset \not= M \subseteq S$ and every $\varepsilon > 0$ there is a normalized vector $x \in E$ with
		\begin{align*}
			\max_{s \in M} \|\chi(s)x - T_{s}x\| \leq \varepsilon.
		\end{align*}
	\end{enumerate}
\end{proposition}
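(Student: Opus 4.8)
The plan is to prove the equivalences as a cyclic chain \ref{charspec2a} $\Rightarrow$ \ref{charspec2b} $\Rightarrow$ \ref{charspec2c} $\Rightarrow$ \ref{charspec2a}, exploiting the operator-algebraic characterizations already available from \cref{prop:charspec} applied to the unital Banach algebra $A = \mathscr{L}(E)$. The content of the proposition is the passage between approximate eigen\emph{elements} of the algebra (which \cref{prop:charspec} controls) and approximate eigen\emph{vectors} in $E$ (which the new conditions \ref{charspec2b} and \ref{charspec2c} describe); the remaining implications are soft reformulations between a convergent net and finite $\varepsilon$-approximations.

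For \ref{charspec2a} $\Rightarrow$ \ref{charspec2b} I would invoke condition~\ref{prop:charspec:itm:approx} of \cref{prop:charspec}: since $\chi \in \sigma_\unitary(T)$, there is a net $(c_j)$ of operators in $\mathscr{L}(E)$ with $\|c_j\| = 1$ for every $j$ and $(\chi(s) - T_s)c_j \to 0$ for every $s \in S$. To convert these operators into vectors, for each index $j$ I would choose a unit vector $y_j \in E$ that nearly norms $c_j$, say $\|c_j y_j\| \ge \tfrac{1}{2}$, which is possible precisely because $\|c_j\| = 1$, and then set $x_j \coloneqq c_j y_j / \|c_j y_j\|$. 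These satisfy $\|x_j\| = 1$, and for each fixed $s \in S$ the estimate $\|\chi(s) x_j - T_s x_j\| \le 2\,\|(\chi(s) - T_s)c_j\| \to 0$ exhibits $(x_j)$ as the required net. This is the step carrying the real content; the only point needing care is the uniform lower bound $\|c_j y_j\| \ge \tfrac{1}{2}$, which is what guarantees that dividing by $\|c_j y_j\|$ does not spoil the convergence.

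The implication \ref{charspec2b} $\Rightarrow$ \ref{charspec2c} is then routine. Given a nonempty finite set $M \subseteq S$ and $\varepsilon > 0$, for each $s \in M$ the convergence in \ref{charspec2b} yields an index beyond which $\|\chi(s) x_j - T_s x_j\| \le \varepsilon$; since $M$ is finite and the index set is directed, I can pick a single index $j$ working simultaneously for all $s \in M$, and $x \coloneqq x_j$ satisfies the requirement of \ref{charspec2c}.

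Finally, for \ref{charspec2c} $\Rightarrow$ \ref{charspec2a} I would verify condition~\ref{prop:charspec:itm:laplace} of \cref{prop:charspec}. Fixing $s_1, \dots, s_n \in S$ and scalars $\beta_1, \dots, \beta_n \in \C$, I would apply \ref{charspec2c} to $M = \{s_1, \dots, s_n\}$ and a small $\varepsilon$ to obtain a unit vector $x$ with $\|\chi(s_k) x - T_{s_k} x\| \le \varepsilon$ for all $k$. Testing the operator $\sum_{k=1}^n \beta_k T_{s_k}$ on $x$ and using the triangle inequality gives $\norm{\sum_{k=1}^n \beta_k T_{s_k}} \ge \modulus{\sum_{k=1}^n \beta_k \chi(s_k)} - \big(\sum_{k=1}^n \modulus{\beta_k}\big)\varepsilon$, so letting $\varepsilon \to 0$ produces exactly the inequality in~\ref{prop:charspec:itm:laplace}, whence \ref{charspec2a} by \cref{prop:charspec}. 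I do not anticipate any genuine obstacle beyond the norming-vector argument in the first implication.
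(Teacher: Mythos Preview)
Your argument is correct. The chain \ref{charspec2a} $\Rightarrow$ \ref{charspec2b} $\Rightarrow$ \ref{charspec2c} $\Rightarrow$ \ref{charspec2a} works as you describe, and the norming-vector step in the first implication is sound.

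The paper organizes the proof differently. For \ref{charspec2b} $\Rightarrow$ \ref{charspec2a} it goes back to condition~\ref{prop:charspec:itm:approx} of \cref{prop:charspec} rather than to~\ref{prop:charspec:itm:laplace}: one fixes any norm-one functional $x' \in E'$ and forms the rank-one operators $x' \otimes x_j \in \mathscr{L}(E)$, which are normalized and satisfy $(\chi(s)-T_s)(x' \otimes x_j) = x' \otimes (\chi(s)x_j - T_s x_j) \to 0$. The paper then handles \ref{charspec2c} $\Rightarrow$ \ref{charspec2b} separately by building an explicit net indexed over $\mathscr{P}_{\mathrm{fin}}(S) \times \N$. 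Your route avoids this net construction entirely by closing the cycle through the Laplace-type inequality~\ref{prop:charspec:itm:laplace}; this is a bit more economical, though it trades the transparent rank-one trick for an $\varepsilon$-limit argument. Either approach is perfectly adequate here.
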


A net $(x_j)_{j \in J}$ as in \cref{charspec2}\ref{charspec2b} 
is called an \notion{approximate eigenvector} for $\chi \in \sigma_\unitary(S)$.
If $S$ is cancellative the equivalence of properties~\ref{charspec2a} and~\ref{charspec2b} 
in the proposition also follows from \cite[Proposition~2.2]{BaPh1992}.

\begin{proof}[Proof of \cref{charspec2}]
	\ImpliesProof{charspec2a}{charspec2b}
	This implication is a straightforward consequence of property \ref{prop:charspec:itm:approx} in \cref{prop:charspec}.

	\ImpliesProof{charspec2b}{charspec2a}
	Let $(x_j)_{j \in J} \subseteq E$ be a an approximate eigenvector as in~\ref{charspec2b}.
	Choose a functional $x' \in E'$ of norm $1$.  
	Then the net $(x'\otimes x_j)_{j\in J} \subseteq \mathscr{L}(E)$ of rank-one operators given by 
	\begin{align*}
		x'\otimes x_j \colon y \mapsto \langle x', y \rangle x_j
	\end{align*}
	satisfies the property~\ref{prop:charspec:itm:approx} in \cref{prop:charspec}.

	\ImpliesProof{charspec2b}{charspec2c} 
	This implication is obvious.

	\ImpliesProof{charspec2c}{charspec2b}
	Assume that~\ref{charspec2c} holds. 
	We consider the set $\mathscr{P}_{\mathrm{fin}}(S)$ of non-empty finite subsets of $S$ ordered by set inclusion. 
	Endow $\N$ with its natural order and $J \coloneqq \mathscr{P}_{\mathrm{fin}}(S) \times \N$ with the product order. 
	By choosing for every $(M,n) \in J$ some $x_{M,n} \in E$ with $\|x_{M,n}\| = 1$ 
	and $\max_{s \in M} \|\chi(s)x_{n,M} - T_sx_{n,M}\| \leq \frac{1}{n}$ we obtain a net 
	with the properties required in~\ref{charspec2b}. 
\end{proof}

The following are some simple examples for the unitary spectrum of semigroup representations on operator spaces $\mathscr{L}(E)$.

\begin{examples}\label{examplesspec}
	\begin{enumerate}[(i)]
		\item \label{examplesspec1} 
		Let $E$ be a Banach space 
		and let $T \colon \R_{\geq 0} \rightarrow \mathscr{L}(E)$ be 
		a strongly continuous bounded semigroup representation -- i.e., a bounded $C_0$-semigroup -- 
		with generator $A$ which is eventually norm-continuous (see \cite[Subsection~II.4.c]{EN00}), 
		e.g., an analytic $C_0$-semigroup. 
		For every $r \in \mathrm{i}\R$, define the unitary semigroup character $\chi_r \in \unitaryDual{S}$ 
		by $\chi_r(t) \coloneqq \mathrm{exp}(rt)$ for all $t \geq 0$.
		We claim that 
		\begin{align*}
			\psi \colon 
			\sigma(A) \cap \mathrm{i}\R & \rightarrow \sigma_\unitary(T), 
			\\
			\qquad 
			r & \mapsto \chi_r
		\end{align*}
		is a homeomorphism.
		
		First, we show that $\psi$ is well-defined, i.e.\ that it indeed maps into $\sigma_\unitary(T)$. 
		So let $r \in \sigma(A) \cap \mathrm{i}\R$. 
		Then we find an approximate eigenvector $(x_j)_{j \in J}$ of $A$ for $r$ since, 
		by the boundedness of $T$, $r$ is in the boundary of $\sigma(A)$. 
		But then $(x_j)_{j \in J}$ is also an approximate eigenvector for $\chi_r$ (see the proof of \cite[Theorem IV.3.6]{EN00}), 
		hence $\chi_r \in \sigma_\unitary(T)$. 
		So $\psi$ is indeed well-defined.
		
		It is obvious that $\psi$ is continuous and injective. 
		Since $\sigma(A) \cap \mathrm{i}\R$ is compact by \cite[Theorem~II.4.18]{EN00}, 
		continuity of the inverse map is automatic once we have established that the map is surjective.

		To do so, take $\chi \in \sigma_\unitary(T)$.
		It follows from the eventual norm continuity of $T$ and from \cref{prop:charspec}\ref{prop:charspec:itm:char-exists}, 
		that $\chi$ is also eventually continuous.
		As $\chi$ is unitary, we conclude that $\psi$ is even continuous. 
		Thus, there exists $r \in \mathrm{i}\R$ such that $\psi = \psi_r$. 
		We need to show that $r \in \sigma(A) \cap \mathrm{i}\R$.
		
		To this end, let $c \coloneqq \max\{|z| \mid z \in \sigma(A) \cap \mathrm{i}\R\} +1$. 
		Since 
		\begin{align*}
			\exp\biggl(\frac{2\pi r}{c+|r|}\biggr) 
			= 
			\chi_r \biggl(\frac{2\pi }{c+|r|}\biggr) 
			\in 
			\sigma\biggl(T\biggl(\frac{2\pi}{c+|r|}\biggr)\biggr),
		\end{align*}
		we find by the spectral mapping theorem for eventually norm continuous semigroups 
		(see \cite[Theorem~IV.3.10]{EN00}) 
		some $s \in \sigma(A) \cap \mathrm{i}\R$ such that 
		\begin{align*}
			\exp\biggl(\frac{2\pi r}{c+|r|}\biggr) 
			= 
			\exp\biggl(\frac{2\pi s}{c+|r|}\biggr)
			.
		\end{align*}
		Hence $s-r = k\mathrm{i} (c+|r|)$ for some $k \in \Z$. If $k \neq 0$, then $|s| \geq |s-r| - |r|\geq c$, a contradiction. 
		Thus, $r = s \in \sigma(A) \cap \mathrm{i}\R$.

		\item \label{examplesspec2} 
		Assume that $S = G$ is an abelian group and 
		consider the regular representation $T \colon G\rightarrow \mathscr{L}(\ell^1(G))$ 
		on the space $\ell^1(G)$ of absolutely summable complex-valued functions on $G$ 
		defined by $T_s (a_t)_{t \in G} \coloneqq (a_{s+t})_{t \in G}$ 
		for all $(a_t)_{t \in G} \in \ell^1(G)$ and $s \in G$. 
		Then $\sigma_\unitary(T) = \unitaryDual{G}$. 
		
		To see this, note that $\ell^1(G)$ can be interpreted 
		as a closed commutative unital subalgebra of $\mathscr{L}(\ell^1(G))$ by means of convolution, 
		and that this subalgebra contains $T(S) = T(G)$. 
		Hence, the claim follows from \cref{prop:charspec}\ref{prop:charspec:itm:char-all}.
	\end{enumerate}
\end{examples}

We need the following compatibility properties of the unitary spectrum with standard constructions.

\begin{lemma}\label{compatibility}
	For a Banach spaced $E$ 
	and a bounded representation $T \colon S \rightarrow \mathscr{L}(E)$ 
	the following assertions hold.
	\begin{enumerate}[(i)]
		\item \label{compatibility1}
		$\sigma_\unitary(T|_F) \subseteq \sigma_\unitary(T)$ for every closed invariant vector subspace $F \subseteq E$.
		
		\item \label{compatibility2}
		$\sigma_\unitary(T') = \sigma_\unitary(T)$ for the dual representation $T'$ on $E'$.
		
		\item \label{compatibility3}
		$\sigma_\unitary(\chi T) = \chi \cdot \sigma_\unitary(T)$ for every unitary semigroup character $\chi \in \unitaryDual{S}$.
		
		\item \label{compatibility4}
		$\sigma_\unitary(T_1 \oplus T_2) = \sigma_\unitary(T_1) \cup \sigma_\unitary(T_2)$ 
		for every sum $T_1 \oplus T_2$ of bounded representations 
		$T_1 \colon S \rightarrow \mathscr{L}(E_1)$ and $T_2 \colon S \rightarrow \mathscr{L}(E_2)$ on two Banach spaces $E_1, E_2$.
	\end{enumerate}
\end{lemma}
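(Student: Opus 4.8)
The plan is to treat each of the four parts with whichever of the equivalent descriptions from \cref{prop:charspec} and \cref{charspec2} is most convenient. Part~\ref{compatibility1} is the most direct: if $\chi \in \sigma_\unitary(T|_F)$, then by \cref{charspec2}\ref{charspec2b} there is a normalized net $(x_j)$ \emph{inside $F$} with $\|\chi(s)x_j - (T_s|_F)x_j\| \to 0$ for every $s \in S$. Since $x_j \in F$ we have $(T_s|_F)x_j = T_s x_j$, so $(x_j)$ is an approximate eigenvector for $T$ on all of $E$, and hence $\chi \in \sigma_\unitary(T)$ by the same proposition.

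For parts~\ref{compatibility2} and~\ref{compatibility3} I would use the Laplace-type inequality \cref{prop:charspec}\ref{prop:charspec:itm:laplace}, which is phrased purely in terms of the norms of the finite linear combinations $\sum_k \beta_k T_{s_k}$. For~\ref{compatibility2}, the key observation is that $\sum_k \beta_k (T')_{s_k} = \big(\sum_k \beta_k T_{s_k}\big)'$ and an operator has the same norm as its adjoint; thus condition~\ref{prop:charspec:itm:laplace} holds for $T'$ precisely when it holds for $T$, which gives $\sigma_\unitary(T') = \sigma_\unitary(T)$. For~\ref{compatibility3}, given $\chi_0 \in \sigma_\unitary(T)$ and the fixed $\chi$, I would substitute the coefficients $\gamma_k \coloneqq \beta_k \chi(s_k)$ into the inequality for $\chi_0$ (which is valid for arbitrary coefficient tuples); since $(\chi T)_{s_k} = \chi(s_k)T_{s_k}$ this turns the defining inequality for $\chi_0$ with respect to $T$ into exactly the one for $\chi\chi_0$ with respect to $\chi T$, so $\chi\chi_0 \in \sigma_\unitary(\chi T)$. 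This proves $\chi \cdot \sigma_\unitary(T) \subseteq \sigma_\unitary(\chi T)$, and the reverse inclusion follows by applying the same statement with $\overline{\chi}$ in place of $\chi$ and $\chi T$ in place of $T$, using $\overline{\chi}\,(\chi T) = T$ in the group $\unitaryDual{S}$.

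Part~\ref{compatibility4} is where I expect the real work, and I would argue with approximate eigenvectors via \cref{charspec2}\ref{charspec2b}. The inclusion $\sigma_\unitary(T_1) \cup \sigma_\unitary(T_2) \subseteq \sigma_\unitary(T_1 \oplus T_2)$ is easy: an approximate eigenvector $(x_j)$ for $\chi$ and $T_1$ yields the approximate eigenvector $(x_j, 0)$ for $\chi$ and $T_1 \oplus T_2$ (after normalizing, for the usual sum norms), and symmetrically for $T_2$. The harder inclusion is $\sigma_\unitary(T_1 \oplus T_2) \subseteq \sigma_\unitary(T_1) \cup \sigma_\unitary(T_2)$. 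Here I would take a normalized approximate eigenvector $\big((x_j, y_j)\big)$ for $\chi$ and $T_1 \oplus T_2$; comparing components shows $\|\chi(s)x_j - (T_1)_s x_j\| \to 0$ and $\|\chi(s)y_j - (T_2)_s y_j\| \to 0$ for each $s$. Since $\|(x_j,y_j)\| = 1$, the two components cannot both tend to $0$, so after passing to a subnet one of them, say $(\|x_j\|)$, stays bounded below by some $\delta > 0$; normalizing $x_j / \|x_j\|$ then produces a genuine approximate eigenvector for $\chi$ and $T_1$, whence $\chi \in \sigma_\unitary(T_1)$ (and analogously $\chi \in \sigma_\unitary(T_2)$ in the other case).

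The main obstacle is exactly this last subnet argument in~\ref{compatibility4}: one must ensure that at least one component stays uniformly away from $0$ along a subnet, so that the normalization is harmless, and this is the point where the precise (monotone, $\ell^p$-type) choice of sum norm enters. Everything else reduces to bookkeeping with the characterizations already established in \cref{prop:charspec} and \cref{charspec2}.
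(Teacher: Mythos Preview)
Your proof is correct and follows essentially the same lines as the paper's, just selecting different (but equivalent) characterizations from \cref{prop:charspec} and \cref{charspec2} for parts~\ref{compatibility2} and~\ref{compatibility3}. Concerning the ``obstacle'' you flag in~\ref{compatibility4}: the choice of sum norm is a red herring; what actually makes the subnet step go through is the elementary observation (stated explicitly in the paper's proof) that whenever a directed set $J$ is a union $J_1 \cup J_2$, at least one of $J_1,J_2$ is cofinal in $J$---so with, say, the max norm and $J_1 \coloneqq \{j : \|x_j\| \ge \tfrac12\}$, $J_2 \coloneqq \{j : \|y_j\| \ge \tfrac12\}$, one immediately obtains the desired subnet along which a fixed component stays bounded below.
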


\begin{proof}
	\ref{compatibility1}, \ref{compatibility3}: 
	Those assertions can readily checked by using \cref{charspec2}\ref{charspec2b}.
	
	\ref{compatibility2}: 
	This is an easy consequence of the fact that the restriction of $\mathscr{L}(E) \rightarrow \mathscr{L}(E')$, $R \mapsto R'$ 
	to any commutative unital subalgebra of $\mathscr{L}(E)$ is an isometric unital algebra homomorphism. 
	
	\ref{compatibility4} 
	The inclusion $\supseteq$ is an immediate consequence of part~\ref{compatibility1}. 
	The converse inclusion $\subseteq$ also follows from \cref{charspec2}\ref{charspec2b} 
	by using the following observation:
	if a directed set $J$ is the disjoint union of two sets $J_1$ and $J_2$, 
	then at least one of the sets $J_1, J_2$ is majorizing in $J$.
\end{proof}

Similarly as for single operators, we can also consider the point spectrum of a bounded semigroup representation.

\begin{definition}
	Let $E$ be a Banach space.
	For a bounded representation $T \colon S \rightarrow \mathscr{L}(E)$ 
	we call a unitary semigroup character $\chi \in \unitaryDual{S}$ an \notion{eigenvalue} of $T$ 
	if there exists an \notion{eigenvector} with respect to $\chi$, i.e., 
	a non-zero vector $x \in E$ such that $T_sx=\chi(s)x$ for every $s \in S$. 
	The set $\sigma_\unitaryPoint(T)$ of all such eigenvalues $\chi \in \unitaryDual{S}$ 
	is called the \notion{unitary point spectrum} of $T$. 
	Moreover, we call
	\begin{align*}
		\ker(\chi - T) 
		\coloneqq 
		\bigcap_{s \in S} \ker(\chi(s) - T_s) 
		= 
		\{x \in E \mid T_sx = \chi(s)x \textrm{ for every } s \in S\}
	\end{align*}
	the \notion{eigenspace} of $T$ with respect to $\chi \in \unitaryDual{S}$. 
	The space
	\begin{align*}
		\fix(T) 
		\coloneqq 
		\ker(\mathds{1}_S - T) 
		= 
		\{x \in E \mid T_sx = x \textrm{ for every } s \in S\}
	\end{align*}
	is called the \notion{fixed space} of $T$.
\end{definition}

It follows from \cref{charspec2}\ref{charspec2b} that the unitary point spectrum of a bounded representation $T$ 
is contained in the unitary spectrum of $T$, i.e.\ $\sigma_\unitaryPoint(T) \subseteq \sigma_\unitary(T)$.

\begin{example}
	\label{exa:C_0-pnt-spec}
	Let $E$ be a Banach space.  
	Consider a bounded strongly continuous representation $T \colon \R_{\geq 0} \rightarrow \mathscr{L}(E)$ 
	which is eventually norm continuous and let $A$ denote its generator. 
	Then the map from Example \ref{examplesspec} \ref{examplesspec1} restricts to a homoemorphism
	\begin{align*}
		\sigma_{\mathrm{pnt}}(A) \cap \mathrm{i}\R \rightarrow \sigma_\unitaryPoint(T), \quad r \mapsto \chi_r,
	\end{align*}
	and $\ker(\chi_r - T) = \ker(r - A)$ for every $r \in \mathrm{i}\R$ (see \cite[Theorem IV.3.7 and Corollary IV.3.8]{EN00}).
\end{example}

\begin{example}
	\label{exa:klein-four}
	Consider the $2\times 2$-matrices 
	\begin{align*}
		I \coloneqq \left(\begin{array}{cc}
		1 & 0 \\ 
		0 & 1
		\end{array}\right) \quad \textrm{ and } \quad P \coloneqq \left(\begin{array}{cc}
		0 & 1 \\ 
		1 & 0
		\end{array}
		\right).
	\end{align*}
	If $S$ is the abelian multiplicative semigroup of linear operators on $\C^4$ defined by the permutation matrices
	\begin{align*}
		\left\{\left(\begin{array}{cc}
		I & 0 \\ 
		0 & I
		\end{array}\right), \left(\begin{array}{cc}
		P & 0 \\ 
		0 & I
		\end{array}\right),\left(\begin{array}{cc}
		I & 0 \\ 
		0 & P
		\end{array}\right), \left(\begin{array}{cc}
		P & 0 \\ 
		0 & P
		\end{array}\right)\right\},
	\end{align*}
	then $S$ is isomorphic to the Klein four group. 
	In particular, $\unitaryDual{S}$ has exactly four elements, 
	namely the semigroup characters $\mathds{1}_S$, $\chi$, $\tau$, $\det$ whose values are given in the following table:
	\begin{center}
		\begin{tabular}{c|cccc}
			& 
			$
				\left(\begin{array}{cc}
					I & 0 \\ 
					0 & I
				\end{array}\right)
			$
			&
			$
				\left(\begin{array}{cc}
					P & 0 \\ 
					0 & I
				\end{array}\right)
			$ 
			&
			$
				\left(\begin{array}{cc}
					I & 0 \\ 
					0 & P
				\end{array}\right)
			$ 
			&
			$
				\left(\begin{array}{cc}
					P & 0 \\ 
					0 & P
				\end{array}\right)
			$ 
			\\ 
			\hline 
			$\mathds{1}_S$ & $\phantom{-}1$ & $\phantom{-}1$ & $\phantom{-}1$ & $\phantom{-}1$ \\ 
			$\chi$         & $\phantom{-}1$ & $-1$ & $\phantom{-}1$ & $-1$ \\ 
			$\tau$         & $\phantom{-}1$ & $\phantom{-}1$ & $-1$ & $-1$ \\ 
			$\det$         & $\phantom{-}1$ & $-1$ & $-1$ & $\phantom{-}1$ 
		\end{tabular}
	\end{center}
	Consider the identity map $T = \mathrm{id} \colon S \rightarrow \mathscr{L}(\C^4)$ as a bounded representation of $S$. 
	We claim that $\sigma_\unitary(T) = \sigma_\unitaryPoint(T) = \{\mathds{1}_S, \chi, \tau\}$.
	
	Indeed, one can readily check that 
	$\{\mathds{1}_S, \chi, \tau\} \subseteq \sigma_\unitaryPoint(T) \subseteq \sigma_\unitary(T)$. 
	On the other hand, observe that
	\begin{align*}
		\left(\begin{array}{cc}
			I & 0 \\ 
			0 & I
		\end{array}\right) 
		+ 
		\left(\begin{array}{cc}
			P & 0 \\ 
			0 & P
		\end{array}\right) 
		= 
		\left(\begin{array}{cc}
			P & 0 \\ 
			0 & I
		\end{array}\right) 
		+ 
		\left(\begin{array}{cc}
			I & 0 \\ 
			0 & P
		\end{array}\right)
		,
	\end{align*}
	but 
	\begin{align*}
		\det
		\left(\begin{array}{cc}
			I & 0 \\ 
			0 & I
		\end{array}\right) 
		+  
		\det
		\left(\begin{array}{cc}
			P & 0 \\ 
			0 & P
		\end{array}\right) 
		= 
		2 
		\neq 
		-2 
		= 
		\det
		\left(\begin{array}{cc}
			P & 0 \\ 
			0 & I
		\end{array}\right) 
		+ 
		\det
		\left(\begin{array}{cc}
			I & 0 \\ 
			0 & P
		\end{array}\right)
		,
	\end{align*}
	so the mapping $\det$ cannot be extended to an algebra character in $\Gamma_{\mathrm{A}(T(S))}$ 
	and thus, it follows from \cref{homeomorph} that $\det \notin \sigma_\unitary(T)$
	 
	Observe however, that $\det(A) \in \sigma_{\mathrm{p}}(A) = \sigma(A)$ for every $A \in S$, 
	so this is another counterexample to the question mentioned before \cref{exa:functions-on-circle}.
\end{example}

In the spectral theory of operators, the poles of the resolvent play an important role. 
Recall here that for a power-bounded operator $R$ on a Banach space $E$ a number $\lambda_0 \in \T$ is a \notion{pole of the resolvent} 
if there is a neighborhood $U$ of $\lambda$ such that the pointed neighbourhood $U\setminus\{\lambda\}$ 
is contained in the resolvent set $\C \setminus \sigma(R)$ 
and such that the limit $\lim_{\mu \to \lambda}(\mu-\lambda)(\mu - R)^{-1}$ exists in $\mathscr{L}(E)$; 
see \cite[Definition~VII.3.15]{DuSc1966}. 
(Actually, our definition only covers poles of order at most one. 
However, by \cite[Lemma~VIII.8.1]{DuSc1966} there are no poles of higher order in $\T$ since $T$ is power-bounded.)  
In the situation of semigroup representations, we do not have a resolvent map at our disposal. 
We therefore use the following characterization of poles, based on spectral decomposition, 
as the basis for a generalization (this is an easy consequence of \cite[Theorem~VII.3.18]{DuSc1966}): 
For a power-bounded operator $R \in \mathscr{L}(E)$ on a Banach space $E$ a number $\lambda \in \T$ is a pole of the resolvent 
if and only if there is a projection $P \in \mathscr{L}(E)$ onto $\ker(\lambda -R)$ 
that satisfies $PR = RP$ and $1\notin \sigma_{\unitary}(R|_{\ker(P)})$.

\begin{definition}
	\label{def:pole-riesz}
	Let $E$ be Banach space and let $T \colon S \rightarrow \mathscr{L}(E)$ be a bounded representation. 
	A semigroup character $\chi \in \unitaryDual{S}$ is a \notion{pole} of $T$ 
	if there is a projection $P \in \mathscr{L}(E)$ onto $\ker(\chi - T)$ 
	such that $PT_s=T_sP$ for all $s \in S$ and $\chi \notin \sigma_{\unitary}(T|_{\mathrm{ker}(P)})$. 
	It is a \notion{Riesz point} of $T$ if, in addition, $\dim \ker(\chi - T) < \infty$.
\end{definition}

Essentially the same definition of a pole is given in \cite[Definition~3.1]{Huang95}.
Note that if $\chi, \tau \in \unitaryDual{S}$ and $\chi$ is a pole or a Riesz point of $T$, 
then $\tau \cdot \chi$ is a pole or a Riesz point of $\tau T$ with the same projection $P$. 
This readily follows from \cref{def:pole-riesz} and \cref{compatibility}\ref{compatibility3}.
It follows from \cref{charunierg}\ref{charuniergii} that the projection $P$ in \cref{def:pole-riesz} is uniquely determined.

Poles and Riesz points will be essential for characterizing uniformly mean ergodic 
and quasi-compact bounded semigroup representations in Section~\ref{secuniform} below.
Let us point out, though, that care is needed regarding the relation to $C_0$-semigroup theory: 
if $S = \big([0,\infty),+\big)$ and $T \colon S \to \mathscr{L}(E)$ is a bounded and strongly continuous representation 
-- i.e., a bounded $C_0$-semigroup -- then poles of the resolvent of the generator of $T$ 
that are located in the imaginary axis 
do not, in general, translate to poles in the sense of Definition~\ref{def:pole-riesz}; 
see \cref{rem:rotation-sg} below for details and for a counterexample.
The situation is, however, better for $C_0$-semigroups that are eventually norm continuous:

\begin{example}\label{exppole}
	Consider, as in \cref{examplesspec}\ref{examplesspec1}, 
	a bounded strongly continuous representation $T \colon \R_{\geq 0} \rightarrow \mathscr{L}(E)$ which is eventually norm continuous. 
	Let $\sigma_{\mathrm{pole}}(A)$ denote the set of poles of its generator $A$ within the spectrum $\sigma(A)$  
	and let $\sigma_{\unitary,\mathrm{pole}}(T)$ denote the set of poles of $T$ within the unitary spectrum $\sigma_{\unitary}(T)$.
	Then $\sigma_{\mathrm{pole}}(A) \cap \mathrm{i}\R$ is a finite set by \cite[Theorem~II.4.18]{EN00} 
	and the poles in this set are of order one by the boundedness of $T$ and \cite[Theorem IV.3.6]{EN00}. 
	
	We use the same notation as in \cref{examplesspec}\ref{examplesspec1}: 
	for every $r \in \mathrm{i}\R$ let the unitary semigroup character $\chi_r \in \unitaryDual{S}$ be defined by 
	$\chi_r(t) = \exp(rt)$ for all $t \geq 0$. 
	Then the map 
	\begin{align*}
		\sigma_{\mathrm{pole}}(A) \cap \mathrm{i}\R \rightarrow \sigma_{\unitary,\mathrm{pole}}(T), \quad r \mapsto \chi_r
	\end{align*}
	is a bijection. 
	To see this, use the homeomorphism from \cref{examplesspec}\ref{examplesspec1} 
	and the spectral decomposition of the generator and the semigroup (see \cite[Section A-III.3]{Nage1986}).
	It follows from Example \ref{exa:C_0-pnt-spec} that this bijection maps Riesz points to Riesz points.
\end{example}

The following lemma is an immediate consequence of the compactness of the unitary spectrum (\cref{homeomorph}) 
and the compatibility with sums of representations (\cref{compatibility}).

\begin{lemma}\label{isolated}
	Let $E$ be a Banach space and let $T \colon S \rightarrow \mathscr{L}(E)$ be a bounded representation. 
	If a semigroup character $\chi \in \unitaryDual{S}$ is a pole of $T$, 
	then either $\chi \notin \sigma_{\unitary}(T)$ or 
	$\chi$ is an isolated point of $\sigma_{\unitary}(T)$.
\end{lemma}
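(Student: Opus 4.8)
The plan is to exploit the invariant splitting of $E$ furnished by the pole projection and then read off the spectrum from the compatibility lemma. Let $P \in \mathscr{L}(E)$ be the projection onto $\ker(\chi - T)$ provided by \cref{def:pole-riesz}. Since $P$ commutes with every $T_s$, both $\im P = \ker(\chi - T)$ and $\ker P$ are closed invariant subspaces, and under the canonical identification $E = \im P \oplus \ker P$ the representation $T$ becomes the sum $T|_{\im P} \oplus T|_{\ker P}$. Applying \cref{compatibility}\ref{compatibility4} then yields
\[
	\sigma_\unitary(T) = \sigma_\unitary(T|_{\im P}) \cup \sigma_\unitary(T|_{\ker P}).
\]

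Next I would identify the first term on the right-hand side. On $\im P = \ker(\chi - T)$ the representation acts as the scalar representation $s \mapsto \chi(s)\id$, that is, as $\chi \cdot R$ where $R$ is the trivial representation $s \mapsto \id_{\im P}$. Using \cref{charspec2}\ref{charspec2b} one sees that the trivial representation on a nonzero space has unitary spectrum exactly $\{\mathds{1}_S\}$, because the approximate-eigenvector condition $\|\tau(s)x_j - x_j\| \to 0$ forces $|\tau(s)-1| = 0$ for every $s$. Hence \cref{compatibility}\ref{compatibility3} gives $\sigma_\unitary(T|_{\im P}) = \chi \cdot \{\mathds{1}_S\} = \{\chi\}$ when $\ker(\chi - T) \neq \{0\}$, and $\sigma_\unitary(T|_{\im P}) = \emptyset$ otherwise. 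On the other hand, the defining property of a pole is precisely that $\chi \notin \sigma_\unitary(T|_{\ker P})$; since this set is closed in the compact group $\unitaryDual{S}$ by \cref{homeomorph}, there is an open neighbourhood $U$ of $\chi$ with $U \cap \sigma_\unitary(T|_{\ker P}) = \emptyset$.

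Finally I would combine these observations into the two alternatives of the statement. If $\ker(\chi - T) = \{0\}$, then $\ker P = E$ and the displayed identity reduces to $\sigma_\unitary(T) = \sigma_\unitary(T|_{\ker P})$, which does not contain $\chi$; this gives the first alternative. Otherwise $\sigma_\unitary(T) = \{\chi\} \cup \sigma_\unitary(T|_{\ker P})$, and intersecting with $U$ yields $\sigma_\unitary(T) \cap U = \{\chi\}$, so $\chi$ is an isolated point of $\sigma_\unitary(T)$; this gives the second alternative.

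I do not expect a genuine obstacle here, as the lemma is essentially bookkeeping built on the additivity of the unitary spectrum over invariant sums together with its closedness. The only step requiring a small verification is the computation $\sigma_\unitary(\chi \cdot R) = \{\chi\}$ for the scalar representation on $\im P$, which is handled by the approximate-eigenvector criterion as indicated above.
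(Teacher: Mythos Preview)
Your argument is correct and follows exactly the approach indicated by the paper, which merely says the lemma is an immediate consequence of the compactness of the unitary spectrum (\cref{homeomorph}) and its compatibility with sums (\cref{compatibility}\ref{compatibility4}). You have simply supplied the details the paper leaves implicit---namely the computation $\sigma_\unitary(T|_{\im P}) \subseteq \{\chi\}$ and the separation of the two alternatives---so there is nothing substantively different here.
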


\subsection{Representations on ultrapowers}
	\label{upsubsec}

In this section, we consider an important tool for the spectral theory of representations: the ultrapower of a Banach space. 
We briefly recall this construction 
(see for instance \cite[Paragraph V.1]{Schaefer1970}, \cite{Hein1980}, and  \cite[Chapter 8]{DiJaTo1995} for more details).

For a non-empty set $J$ fix an ultrafilter $p$ on $J$. 
Furthermore, denote by $\ell^{\infty}(J,E)$ the set of all bounded functions from \color{red} $J$ \color{black} to $E$. 
The set 
\begin{align*}
	c_{0,p} \coloneqq \bigl\{(x_j)_{j \in J} \in \ell^{\infty}(J,E) \mid \lim_{p} \|x_j\| = 0\bigr\}
\end{align*} 
of all functions from $J$ to $E$ which converge to zero along $p$, is a closed subspace of $\ell^{\infty}(J,E)$. 
Therefore, the quotient space
\begin{align*}
	E^{p} \coloneqq \ell^{\infty}(J,E)/c_{0, p}
\end{align*}
equipped with the quotient norm is a Banach space, which is called the \notion{ultrapower} of $E$ along $p$. 
We introduce the following notation.
\begin{enumerate}[(i)]
	\item 
	For every element $(x_j)_{j \in J} \in \ell^{\infty}(J, E)$ we denote the corresponding equivalence class in $E^{p}$ by $(x_j)^p_{j \in J}$. 
	Then $\|(x_j)_{j \in J}^p\| = \lim_{p} \|x_j\|$ for every $(x_j)_{j \in J} \in \ell^\infty(J,E)$.

	\item 
	We write $x^{p}$ for the equivalence class which contains the constant net $(x)_{j \in J}$ for $x \in E$ 
	and observe that $E \rightarrow E^p, \, x \mapsto x^p$ is a linear isometry.

	\item 
	For every $R \in \mathscr{L}(E)$ we obtain a bounded operator $R^p \in \mathscr{L}(E^p)$ 
	by setting $R^p(x_j)_{j \in J}^p \coloneqq (Rx_j)_{j \in J}^p$ for every $(x_j)_{j \in J}^p\in E^p$. 
	The map $\mathscr{L}(E) \rightarrow \mathscr{L}(E^p), \, R \mapsto R^p$ is then an isometric unital algebra homomorphism.
	
	\item 
	For every representation $T \colon S \rightarrow \mathscr{L}(E)$ we write $T^p \colon S \rightarrow \mathscr{L}(E^p)$ 
	for the induced representation on $E^p$ defined by $(T^p)_s \coloneqq (T_s)^p$ for all $s \in S$.
\end{enumerate}

We prove the following version of a result of Groh (\cite[Proposition~2.2]{Groh1983};  
see also \cite[Proposition~5.2]{Wolff1984}) for bounded group representations.

\begin{theorem}\label{thm:groh}
	Let $E$ be a Banach space.
	For a bounded representation $T \colon S \rightarrow \mathscr{L}(E)$ and a semigroup character $\chi \in \unitaryDual{S}$ 
	the following assertions are equivalent:
	\begin{enumerate}[(a)]
		\item\label{thm:groha} 
		$\chi$ is a Riesz point of $T$.
		
		\item\label{thm:grohb} 
		$\dim \ker(\chi - T^{p}) < \infty$ for each set $J \not= \emptyset$ and each ultrafilter $p$ on $J$.
	\end{enumerate}
	If~\ref{thm:groha} and~\ref{thm:grohb} hold, then $\ker(\chi - T^p) = \{x^p \mid x \in \ker(\chi - T)\}$ 
	for each ultrafilter $p$ on each set $J \not= \emptyset$.
\end{theorem}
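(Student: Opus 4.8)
The plan is to prove the equivalence by showing both directions, with the key insight being that the point spectrum and eigenspaces of $T$ on the ultrapower $E^p$ relate directly to those of $T$ itself, while Riesz points are characterized by the pole/projection structure of \cref{def:pole-riesz}. First I would establish the easy containment $\{x^p \mid x \in \ker(\chi - T)\} \subseteq \ker(\chi - T^p)$, which is immediate: if $T_s x = \chi(s) x$ for all $s$, then $(T_s)^p x^p = (T_s x)^p = \chi(s) x^p$. Since $x \mapsto x^p$ is an isometry, this gives $\dim \ker(\chi - T) \le \dim \ker(\chi - T^p)$, so \ref{thm:grohb} trivially implies the finite-dimensionality part of \ref{thm:groha}.

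\emph{Proof of \ref{thm:groha} $\Rightarrow$ \ref{thm:grohb} and the final formula.} Assuming $\chi$ is a Riesz point, I would use the projection $P$ from \cref{def:pole-riesz}, decomposing $E = \ker(\chi - T) \oplus \ker(P)$ into $T$-invariant closed subspaces, with $\dim \ker(\chi - T) < \infty$ and $\chi \notin \sigma_\unitary(T|_{\ker P})$. Passing to the ultrapower, $P^p$ is a projection on $E^p$ commuting with each $(T_s)^p$, and $E^p = \ran(P^p) \oplus \ker(P^p)$. On the range, since $\ker(\chi - T)$ is finite-dimensional, its ultrapower is canonically isomorphic to itself (finite-dimensional spaces are unaffected by the ultrapower construction), so $\ran(P^p) = \{x^p \mid x \in \ker(\chi-T)\}$ has the same finite dimension. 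The crucial step is to show $\ker(\chi - T^p) \subseteq \ran(P^p)$, equivalently that $\chi \notin \sigma_\unitaryPoint((T|_{\ker P})^p)$. Here I would invoke \cref{stability}: the condition $\chi \notin \sigma_\unitary(T|_{\ker P})$ means $\overline\chi \cdot T|_{\ker P}$ has empty unitary spectrum, hence by \cref{stability}\ref{stabilityd} there is $s_0 \in S$ with $\|(T|_{\ker P})_{s_0}\| < 1$; but then $\|((T|_{\ker P})_{s_0})^p\| < 1$ by isometry of $R \mapsto R^p$, forcing $\chi \notin \sigma_\unitary((T|_{\ker P})^p)$ and a fortiori excluding eigenvectors. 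This pins down $\ker(\chi - T^p) = \ran(P^p) = \{x^p \mid x \in \ker(\chi-T)\}$, proving both the dimension bound and the final formula.

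\emph{Proof of \ref{thm:grohb} $\Rightarrow$ \ref{thm:groha}.} This is the harder direction, where I must manufacture the projection $P$ from the mere finiteness of ultrapower eigenspaces. The strategy is to first show $\chi$ is a pole. The main obstacle is producing the spectral decomposition without a resolvent: I would argue that $\dim \ker(\chi - T^p) < \infty$ for all $p$ forces $\chi$ to be an isolated point of $\sigma_\unitary(T)$ and forces a spectral gap. Concretely, if $\chi$ were not a Riesz point, one could build an approximate eigenvector net $(x_j)$ for $\chi$ (via \cref{charspec2}\ref{charspec2b}) that, combined with non-closedness or infinite-dimensionality phenomena, yields linearly independent eigenvectors in some $E^p$ — contradicting \ref{thm:grohb}. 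More carefully, following Groh's original approach, I would use the net $(x_j)$ indexed by $J = \mathscr{P}_\mathrm{fin}(S) \times \N$ to produce elements of $\ker(\chi - T^p)$ and show that the absence of a finite-dimensional reducing decomposition allows infinitely many independent such classes. Constructing the commuting projection $P$ then amounts to transferring the finite-rank spectral projection, obtained in the ultrapower where the eigenspace is finite-dimensional and isolated, back down to $E$; the finite-dimensionality across \emph{all} ultrafilters $p$ is exactly what guarantees both the uniform spectral gap (so that $\chi \notin \sigma_\unitary(T|_{\ker P})$) and the finite rank of $P$. I expect the delicate bookkeeping in this reverse direction — ensuring the projection lands on $E$ itself and commutes with every $T_s$ — to be the principal difficulty.
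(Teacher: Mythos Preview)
Your argument for \ref{thm:groha}$\Rightarrow$\ref{thm:grohb} contains a genuine error. The condition $\chi \notin \sigma_\unitary(T|_{\ker P})$ does \emph{not} say that $\overline{\chi}\,T|_{\ker P}$ has empty unitary spectrum; by \cref{compatibility}\ref{compatibility3} it only says $\mathds{1}_S \notin \sigma_\unitary(\overline{\chi}\,T|_{\ker P})$. So you cannot invoke \cref{stability}\ref{stabilityd} to obtain an $s_0$ with $\lVert (T|_{\ker P})_{s_0}\rVert < 1$. Fortunately the detour through \cref{stability} is unnecessary: if $0 \ne z = (y_j)^p \in \ker(P^p)$ were an eigenvector for $\chi$, then with $y_j \in \ker P$ (which one may arrange since $(Py_j)^p = P^p z = 0$) one has $\lim_p \lVert (\chi(s)-T_s)y_j\rVert = 0$ for each $s$ while $\lim_p \lVert y_j\rVert > 0$, and this gives directly, via \cref{charspec2}, that $\chi \in \sigma_\unitary(T|_{\ker P})$ --- a contradiction. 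This is precisely how the paper handles it (\cref{fixedspaces}).

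Your sketch of \ref{thm:grohb}$\Rightarrow$\ref{thm:groha} is heading in the right general direction but leaves the two real difficulties unaddressed, and your plan to ``transfer the finite-rank spectral projection from the ultrapower back to $E$'' is not how the paper proceeds. The paper constructs the projection $P$ \emph{directly on $E$}: since $\fix(\overline{\chi}T)$ is finite-dimensional (taking $J$ a singleton), one picks a basis, uses an ergodic net for the dual representation together with weak*-compactness to manufacture dual vectors in $\fix((\overline{\chi}T)')$, and assembles a bounded projection $P$ onto $\fix(\overline{\chi}T)$ satisfying $PT_s = T_sP$ (\cref{invarproj}). The second step is to show $\chi \notin \sigma_\unitary(T|_{\ker P})$. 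Here the paper argues by contradiction via \cref{subnetapprox}: given a normalized approximate eigenvector $(x_j)_{j\in J}$ in $\ker P$, choose an ultrafilter $p$ on $J$ containing all tails. If $(x_j)$ had no convergent subnet, a diagonal construction produces an infinite $\varepsilon$-separated sequence in the unit sphere of $\fix((\overline{\chi}T)^p)$, contradicting~\ref{thm:grohb}. Hence some subnet converges, necessarily to a nonzero element of $\fix(\overline{\chi}T)\cap \ker P = \{0\}$ --- again a contradiction. Your proposal gestures at ``infinitely many independent eigenvectors in some $E^p$'' but does not supply either the construction of $P$ or this separation argument, and those are exactly the substantive parts.
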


To prepare the proof we first show several auxiliary results.

\begin{lemma}\label{fixedspaces}
	Let $p$ be an ultrafilter on a set $J \not= \emptyset$.  
	Let $E$ be a Banach space 
	and $T \colon S \rightarrow \mathscr{L}(E)$ be a bounded representation. 
	If a semigroup character $\chi \in \unitaryDual{S}$ is a pole of $T$, then $\ker(\chi - T^p) = (\ker(\chi - T))^p$.
\end{lemma}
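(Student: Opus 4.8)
The plan is to prove the two inclusions of $\ker(\chi - T^p) = (\ker(\chi - T))^p$ separately, where $(\ker(\chi-T))^p$ denotes the ultrapower of the subspace $F_0 := \ker(\chi - T)$, realized inside $E^p$ as $\{(y_j)^p_{j\in J} \mid (y_j)_{j\in J} \in \ell^\infty(J,F_0)\}$. The inclusion $(\ker(\chi-T))^p \subseteq \ker(\chi - T^p)$ is immediate and does not even use the pole hypothesis: if $(y_j)_{j\in J} \in \ell^\infty(J,F_0)$, then $(T^p)_s (y_j)^p_{j\in J} = (T_s y_j)^p_{j\in J} = (\chi(s) y_j)^p_{j\in J} = \chi(s)(y_j)^p_{j\in J}$ for every $s \in S$, so the class lies in $\ker(\chi - T^p)$.

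For the reverse inclusion I would exploit the projection $P$ supplied by the definition of a pole (\cref{def:pole-riesz}), which projects onto $\ker(\chi - T)$, commutes with all $T_s$, and satisfies $\chi \notin \sigma_\unitary(T|_{\ker P})$. Since $R \mapsto R^p$ is a unital algebra homomorphism, $P^p$ is again a projection and $P T_s = T_s P$ lifts to $P^p (T^p)_s = (T^p)_s P^p$ for every $s$. Given $\xi = (x_j)^p_{j\in J} \in \ker(\chi - T^p)$, I decompose $\xi = P^p \xi + (\one - P^p)\xi$. The first summand equals $(P x_j)^p_{j\in J}$ with $P x_j \in \ran P = \ker(\chi - T)$, hence already lies in $(\ker(\chi - T))^p$. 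It therefore remains to show that $\eta := (\one - P^p)\xi$ vanishes.

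To treat $\eta$, set $F := \ker P$; as $P$ commutes with every $T_s$, the space $F$ is $T$-invariant. The commutation $P^p(T^p)_s = (T^p)_s P^p$ together with $\xi \in \ker(\chi - T^p)$ gives $\eta \in \ker(\chi - T^p)$, while $\eta = ((\one - P)x_j)^p_{j\in J}$ with $(\one - P)x_j \in F$ shows that $\eta$ lies in the ultrapower $F^p \subseteq E^p$, on which the ambient representation restricts to $(T|_F)^p$. The decisive input is $\chi \notin \sigma_\unitary(T|_F)$: negating \cref{prop:charspec}\ref{prop:charspec:itm:ideal} in the algebra $\mathscr{L}(F)$ produces $a_1, \dots, a_n \in \mathscr{L}(F)$ and $s_1, \dots, s_n \in S$ with $\sum_{k=1}^n a_k(\chi(s_k) - (T|_F)_{s_k}) = \one_F$. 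Applying the homomorphism $R \mapsto R^p$ lifts this to $\sum_{k=1}^n a_k^p(\chi(s_k) - (T|_F)^p_{s_k}) = \one_{F^p}$, and evaluating at $\eta$—which is annihilated by each $\chi(s_k) - (T^p)_{s_k}$—forces $\eta = 0$. Hence $\xi = P^p \xi \in (\ker(\chi - T))^p$, completing the inclusion $\ker(\chi - T^p) \subseteq (\ker(\chi - T))^p$.

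The step requiring the most care is the identification $T^p|_{F^p} = (T|_F)^p$ and the observation that the \emph{finite} left-ideal relation witnessing $\chi \notin \sigma_\unitary(T|_F)$ survives intact under $R \mapsto R^p$; this is precisely why the algebraic ideal characterization of the unitary spectrum is more convenient here than the approximate-eigenvector description, since a finite algebraic identity transports trivially through a homomorphism whereas a net of approximate eigenvectors would not. The remaining facts—that $P^p$ is a projection commuting with $T^p$, that $\ran P = \ker(\chi - T)$, and that $\eta \in F^p$—are routine and I would verify them in passing.
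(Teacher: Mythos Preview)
Your proof is correct and follows the same overall strategy as the paper: decompose any $\xi \in \ker(\chi - T^p)$ via the projection $P$ from the pole definition and show that the $(\one - P^p)$-component vanishes using $\chi \notin \sigma_\unitary(T|_{\ker P})$. The only difference is tactical: the paper argues by contradiction using the approximate-eigenvector characterization (\cref{charspec2}\ref{charspec2b})---if $\lim_p \lVert (\one - P)x_j\rVert > 0$ one obtains an approximate eigenvector for $T|_{\ker P}$ at $\chi$---whereas you use the ideal characterization (\cref{prop:charspec}\ref{prop:charspec:itm:ideal}) to kill $\eta$ directly via a finite algebraic identity lifted through $R \mapsto R^p$; your route is arguably a bit more direct, but the two are essentially equivalent.
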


\begin{proof}
	By the definition of a pole of $T$ (\cref{def:pole-riesz}) there exists 
	a projection $P \in \mathscr{L}(E)$ onto $\ker(\chi -T)$ 
	such that $PT_s=T_sP$ for all $s\in S$ and $\chi \notin \sigma_{\unitary}(T|_{\ker(P)})$. 
	
	The inclusion $(\ker(\chi - T))^p \subseteq \ker(\chi - T^p)$ is straightforward. 
	To prove the converse inclusion, take $(x_j)_{j \in J} \in \ell^{\infty}(J,E)$ with $(x_j)_{j \in J}^p \in \ker(\chi - T^p)$. 
	We define $y_j \coloneqq (\mathrm{Id}_E-P)x_j$ for every $j \in J$. 
	If $\lim_{p }\Vert y_j\Vert=0$, then $(x_j)_{j \in J}^p \in (\ker(\chi - T))^p $ as desired. 
	However, if $\lim_{p }\Vert y_j\Vert>0$, then $\chi \in \sigma_{\unitary}(T|_{\ker(P)})$ since
	\begin{align*}
		\lim_{p}\Vert (T|_{\mathrm{kern}(P)})_sy_j-\chi(s)y_j\Vert 
		\leq 
		\|\mathrm{Id}_E - P\| \cdot \lim_{p}\Vert T_sx_j-\chi(s)x_j\Vert 
		= 
		0.
	\end{align*}
	This contradicts the assumption that $\chi$ is a pole of $T$.
\end{proof}

\begin{lemma}\label{invarproj}
	Let $E$ be a Banach space 
	and $T \colon S \rightarrow \mathscr{L}(E)$ a bounded representation with $\dim \fix(T)<\infty$. 
	Then there is a projection $P \in \mathscr{L}(E)$ onto $\fix(T)$ that satisfies $PT_s =T_sP = P$ for all $s \in S$.
\end{lemma}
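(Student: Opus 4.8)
The plan is to reduce the statement to a separation property of the dual representation and then to write down $P$ explicitly. Set $F \coloneqq \fix(T)$ and $M \coloneqq \sup_s \|T_s\| < \infty$, and note first that the two required identities have a transparent meaning. If $P$ is any bounded projection with $\ran P \subseteq F$, then $T_sP = P$ holds automatically because $T_s$ fixes $F$ pointwise, while $PT_s = P$ holds precisely when $\ker P \supseteq \ran(\mathrm{Id}_E - T_s)$ for every $s$. Hence it suffices to produce a projection of the special form $P = \sum_{i=1}^n \langle x_i', \cdot\,\rangle\, e_i$, where $e_1, \dots, e_n$ is a basis of $F$ and $x_1', \dots, x_n' \in \fix(T')$ satisfy $\langle x_i', e_k\rangle = \delta_{ik}$. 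For such $P$ one checks directly that $P^2 = P$ and $\ran P = F$, that $T_sP = P$ since each $e_i \in F$, and that $PT_s = P$ since $\langle x_i', T_sx\rangle = \langle T_s'x_i', x\rangle = \langle x_i', x\rangle$.

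The real content is therefore the claim that $\fix(T')$ separates the points of the finite-dimensional space $F$. Once this is known, the restriction map $\fix(T') \to F'$, $x' \mapsto x'|_F$, has point-separating image, which by finite-dimensionality of $F$ is all of $F'$; choosing preimages of the dual basis of $(e_i)$ then yields the required functionals $x_i'$. To prove the separation I would invoke the ergodic net $(c_j) \subseteq \overline{\co}\, T(S)$ provided by \cref{ergodicnet}. Two observations drive the argument: every element of $\overline{\co}\, T(S)$ fixes $F$ pointwise, so $c_jx = x$ for all $x \in F$; and, since $S$ is commutative, each $c_j$ commutes with every $T_s$, so the defining property $(\one - T_s)c_j \to 0$ also gives $c_j(\one - T_s) \to 0$ in operator norm.

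Now I would take $0 \neq x \in F$ and, by Hahn--Banach, some $y' \in E'$ with $\langle y', x\rangle = 1$, and consider the bounded net $z_j' \coloneqq (c_j)'y' \in E'$, which satisfies $\|z_j'\| \le M\|y'\|$. From $c_jx = x$ one gets $\langle z_j', x\rangle = \langle y', c_jx\rangle = 1$ for every $j$, and from $c_j(\one - T_s) \to 0$ one gets $\|(\mathrm{Id}_{E'} - T_s')z_j'\| = \|\big(c_j(\one - T_s)\big)'y'\| \le \|c_j(\one - T_s)\|\,\|y'\| \to 0$ for each $s$. By Banach--Alaoglu the net $(z_j')$ has a weak*-cluster point $x' \in E'$. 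Evaluating against $x$ shows $\langle x', x\rangle = 1 \neq 0$, while for each $s$ and each $z \in E$ the number $\langle (\mathrm{Id}_{E'} - T_s')x', z\rangle$ is a cluster value of $\langle (\mathrm{Id}_{E'} - T_s')z_j', z\rangle \to 0$, so $T_s'x' = x'$, i.e.\ $x' \in \fix(T')$. This establishes separation and completes the construction.

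The main obstacle is exactly this separation step: the ergodic net does not converge in operator norm (that would amount to uniform mean ergodicity, which is \emph{not} assumed here), so one cannot simply pass to its limit in $\mathscr{L}(E)$. The remedy is to transport the net to the dual space, where boundedness together with weak*-compactness extracts a genuinely $T'$-fixed functional; the two quantitative inputs $c_jx = x$ and $c_j(\one - T_s) \to 0$ guarantee, respectively, that the limit pairs nontrivially with $x$ and is annihilated by $\mathrm{Id}_{E'} - T_s'$.
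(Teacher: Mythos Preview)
Your proof is correct and follows essentially the same route as the paper's: both arguments use an ergodic net together with weak*-compactness (Banach--Alaoglu) to produce functionals in $\fix(T')$ that act as a dual basis on $\fix(T)$, and then define $P$ by the rank-$n$ formula $P = \sum_i \langle y_i', \cdot\rangle\, e_i$. The only organisational difference is that the paper applies \cref{ergodicnet} directly to the dual representation $T'$, whereas you take an ergodic net $(c_j)$ for $T$, dualise, and use commutativity to turn $(\one - T_s)c_j \to 0$ into $c_j(\one - T_s) \to 0$; this amounts to the same thing, since $(c_j')$ is then an ergodic net for $T'$.
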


\begin{proof}
	Pick a basis $x_1, \dots, x_n$ of $\fix(T)$. 
	By the Hahn-Banach theorem we find $x_1', \dots, x_n' \in E'$ with $x_k'(x_\ell) = \delta_{k\ell}$ for all $k,\ell \in \{1, \dots, n\}$. 
	Now take an ergodic net $(A_j)_{j \in J}$ for \color{red} $T'$ \color{black} (see \cref{ergodicnet}). 
	For every $k \in \{1, \dots, n\}$ the net $(A_j x_k')_{j \in J}$ is bounded 
	and thus has, by the Banach-Alaoglu theorem, a subnet that is weak* convergent to a point $y_k' \in E'$. 
	The map
	\begin{align*}
		P \colon E \rightarrow \fix(T), \quad x \mapsto \sum_{k=1}^n \langle y_k', x \rangle x_k
	\end{align*}
	is a bounded projection with range $\fix(T)$ and thus satisfies $T_s P = P$ for every $s \in S$. 
	Since $(A_j)_{j \in J}$ is an ergodic net, one can readily check that $y_k' \in \fix(T')$ for every $k \in \{1, \dots, n\}$. 
	Hence, one also has $P T_s = P$ for every $s \in S$.
\end{proof}

\begin{lemma}\label{nets}
	Let $(x_j)_{j \in J}$ be a net in a complete metric space $(X,d)$ 
	and assume that for every $\varepsilon > 0$ there is a non-empty finite subset $F \subseteq X$ 
	and an index $j_0 \in J$ such that
	\begin{align*}
		\min_{x \in F} d(x_j,x)\leq \varepsilon \textrm{ for every } j \geq j_0.
	\end{align*}
	Then $(x_j)_{j \in J}$ has a subnet that converges in $X$.
\end{lemma}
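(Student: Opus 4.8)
The plan is to reduce the statement to the existence of a \emph{cluster point} of the net. In an arbitrary topological space a point $y$ is a cluster point of a net if and only if some subnet converges to $y$, so it suffices to produce a point $y \in X$ such that $(x_j)_{j \in J}$ is frequently in every ball around $y$; here I call the net \emph{frequently} in a set $C$ if $\{j : x_j \in C\}$ is cofinal in $J$, and \emph{eventually} in $C$ if it lies in $C$ for all $j$ beyond some index. To locate such a $y$ I would exploit completeness through Cantor's intersection theorem: I will construct a decreasing sequence of nonempty closed sets $C_1 \supseteq C_2 \supseteq \cdots$ whose diameters tend to $0$ and in each of which the net is frequently.

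The construction goes by induction, starting from $C_0 \coloneqq X$, in which the net is trivially frequently. Two elementary observations drive the inductive step. First, if $(x_j)$ is eventually in a set $G$ and frequently in a set $C$, then it is frequently in $C \cap G$; this uses that $J$ is directed, so given any $j_0$ one first passes to an index above both $j_0$ and the eventuality index for $G$, and then invokes frequentness in $C$. Second, if $(x_j)$ is frequently in $C$ and $C$ is covered by finitely many sets $A_1, \dots, A_k$, then it is frequently in $C \cap A_i$ for some $i$ (otherwise, choosing for each $i$ an index past which $x_j \notin C \cap A_i$ and taking an index above all of them, directedness again yields a contradiction). Now, given $C_{n-1}$, I apply the hypothesis with $\varepsilon = 1/n$ to obtain a finite set $F_n$ and an index beyond which the net lies in $G_n \coloneqq \bigcup_{x \in F_n} \overline{B}(x, 1/n)$. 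The first observation makes the net frequently in $C_{n-1} \cap G_n$, and since this equals the finite union $\bigcup_{x \in F_n}\bigl(C_{n-1} \cap \overline{B}(x,1/n)\bigr)$, the second observation provides $p_n \in F_n$ so that the net is frequently in $C_n \coloneqq C_{n-1} \cap \overline{B}(p_n, 1/n)$. By construction $C_n$ is closed and nested inside $C_{n-1}$, it is nonempty (it contains some $x_j$), and it has diameter at most $2/n$.

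Finally, completeness and Cantor's theorem give $\bigcap_{n} C_n = \{y\}$ for a single point $y \in X$. To see that $y$ is a cluster point, fix $\varepsilon > 0$ and an index $j_0$, choose $n$ with $2/n < \varepsilon$, and note that since $y \in C_n$ and the diameter of $C_n$ is at most $2/n$, we have $C_n \subseteq \overline{B}(y,2/n)$; frequentness of the net in $C_n$ then yields some $j \geq j_0$ with $x_j \in \overline{B}(y,\varepsilon)$. Thus $y$ is a cluster point, and extracting a subnet converging to $y$ finishes the proof. I expect the only delicate point to be the bookkeeping that keeps the relevant index sets cofinal throughout the intersection arguments, where the directedness of $J$ is used repeatedly; once the distinction between \emph{frequently} and \emph{eventually} is handled carefully, the remaining steps are routine.
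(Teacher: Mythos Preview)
Your proof is correct. The two observations about ``frequently'' and ``eventually'' are stated and used properly, the inductive construction of the nested closed sets $C_n$ goes through, and Cantor's intersection theorem together with the diameter bound legitimately produces a cluster point.

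However, your route differs from the paper's. The paper first passes to a \emph{universal} subnet of $(x_j)_{j\in J}$ and then argues directly that this subnet is Cauchy: given $\varepsilon>0$, the hypothesis forces the tail of the net into a finite union of $\varepsilon$-balls, and universality then pins the net into a single one of these balls, giving the Cauchy condition immediately; completeness finishes it. This is shorter and avoids building any explicit limit point, but it invokes the existence of universal subnets (hence the ultrafilter lemma) as a black box. Your argument is more hands-on: rather than appealing to universality, you locate the cluster point constructively via a nested-ball scheme, using only countable choice and Cantor's theorem. What you gain is a more elementary, self-contained proof that makes the role of completeness explicit through the intersection of shrinking closed sets; what the paper's approach gains is brevity and a cleaner reduction to the Cauchy criterion.
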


\begin{proof}
	After passing to a subnet we may assume that $(x_j)_{j \in J}$ is a universal net,  
	which means that for every subset $A \subseteq X$ the net $(x_j)_{j \in J}$ is eventually contained in $A$ 
	or is eventually contained in $X\setminus A$. 
	Now take $\varepsilon > 0$. 
	We choose $F \subseteq E$ finite and $j_0 \in J$ with 
	\begin{align*}
		x_j \in \bigcup_{x \in F} \{y \in X \mid d(x,y) \leq \varepsilon\} \textrm{ for every } j \geq j_0.
	\end{align*}
	Since $(x_j)_{j \in J}$ is universal, 
	we then find $x \in F$ and $j_1 \in I$ such that $d(x_j,x) \leq \varepsilon$ for all $j \geq j_1$ 
	and hence $d(x_j,x_k) \leq 2\varepsilon$ for all $j,k \geq j_1$. 
	This shows that $(x_j)_{j \in J}$ is a Cauchy net and thus converges in $X$.
\end{proof}

\begin{lemma}\label{subnetapprox}
	Let $E$ be a Banach space 
	and let $T \colon S \rightarrow \mathscr{L}(E)$ be a bounded representation, 
	let $\mathds{1}_S \in \sigma_{\unitary}(T)$, 
	and let $(x_j)_{j \in J}$ be an approximate eigenvector of $T$ for $\mathds{1}_S$. 
	Let $p$ be an ultrafilter on $J$ that contains all the tails $\{j \in J \mid j \geq j_0\}$ for $j_0 \in J$. 
	If $\dim \fix(T^p) < \infty$,
	then $(x_j)_{j \in J}$ has a subnet that converges to a point in $\fix(T)$. 
\end{lemma}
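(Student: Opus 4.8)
The plan is to apply \cref{nets} to the net $(x_j)_{j \in J}$, viewed in the complete metric space $(E, \norm{\cdot})$, and afterwards to identify the limit of the subnet it produces. The second part is routine: if a subnet $(x_{j_\alpha})_\alpha$ converges to some $x \in E$, then $\norm{x} = \lim_\alpha \norm{x_{j_\alpha}} = 1$, and for every $s \in S$ the continuity of $T_s$ together with $T_s x_{j_\alpha} - x_{j_\alpha} \to 0$ (which holds along the subnet since it holds along the full net) gives $T_s x = x$; hence $0 \neq x \in \fix(T)$. Thus everything reduces to verifying the hypothesis of \cref{nets}: for each $\varepsilon > 0$ there must be a non-empty finite set $F \subseteq E$ and an index $j_0 \in J$ with $\min_{y \in F}\norm{x_j - y} \le \varepsilon$ for all $j \ge j_0$.

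I would prove this total boundedness condition by contradiction, and this is the only point at which $\dim \fix(T^p) < \infty$ is used. Suppose the condition fails for some $\varepsilon > 0$; then for every finite $F \subseteq E$ and every $j_0 \in J$ there is an index $j \ge j_0$ with $\dist(x_j, F) > \varepsilon$. The idea is to turn this failure into infinitely many pairwise $\varepsilon$-separated elements of $\fix(T^p)$. For each $j \in J$ I build recursively a sequence $(i_k(j))_{k \in \N}$ of indices $\ge j$: set $i_1(j) \coloneqq j$, and, having chosen $i_1(j), \dots, i_m(j)$, apply the failure hypothesis to $F = \{x_{i_1(j)}, \dots, x_{i_m(j)}\}$ and to the threshold $j$ to obtain $i_{m+1}(j) \ge j$ with $\norm{x_{i_{m+1}(j)} - x_{i_l(j)}} > \varepsilon$ for all $l \le m$. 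Writing $\phi_k(j) \coloneqq i_k(j)$, I then define $\eta_k \coloneqq (x_{\phi_k(j)})_{j \in J}^p \in E^p$ for each $k \in \N$.

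The three relevant properties of these elements are quick to check. First, $\norm{\eta_k} = \lim_p \norm{x_{\phi_k(j)}} = 1$. Second, given $s \in S$ and $\delta > 0$, pick $j_0$ with $\norm{T_s x_i - x_i} < \delta$ for all $i \ge j_0$; since $\phi_k(j) \ge j$, the set of indices $j$ with $\norm{T_s x_{\phi_k(j)} - x_{\phi_k(j)}} < \delta$ contains the tail $\{j \ge j_0\}$, which lies in $p$ by the hypothesis on $p$. As $\delta$ was arbitrary, $\lim_p \norm{T_s x_{\phi_k(j)} - x_{\phi_k(j)}} = 0$ for every $s$, so $\eta_k \in \fix(T^p)$. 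Third, for $k \ne l$ one has $\norm{\eta_k - \eta_l} = \lim_p \norm{x_{\phi_k(j)} - x_{\phi_l(j)}} \ge \varepsilon$ by construction. Hence $\{\eta_k : k \in \N\}$ is an infinite $\varepsilon$-separated subset of the closed unit ball of $\fix(T^p)$, contradicting the compactness of that ball, which holds because $\fix(T^p)$ is finite-dimensional. This contradiction establishes the total boundedness condition, \cref{nets} supplies a convergent subnet, and the first paragraph finishes the proof.

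I expect the construction in the second paragraph to be the main obstacle: a priori it is unclear how finite-dimensionality of the fixed space of the ultrapower should constrain an approximate eigenvector indexed by an arbitrary (possibly uncountable) directed set. The decisive point is to carry out the separation fibrewise in $j$, choosing all indices $\phi_k(j) \ge j$, so that each diagonal class $\eta_k$ automatically inherits the almost-invariance of $(x_j)$ — and therefore lands in $\fix(T^p)$ — while the pairwise separation, being uniform in $j$, survives the limit along $p$.
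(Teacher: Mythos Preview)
Your proof is correct and follows essentially the same strategy as the paper: both argue by contradiction, extract an $\varepsilon>0$ from the failure of the total-boundedness hypothesis of \cref{nets}, and then build an $\varepsilon$-separated sequence in the unit sphere of $\fix(T^p)$ by choosing, fibrewise in $j$, indices $\ge j$ whose corresponding vectors are $\varepsilon$-far from the previously chosen ones. The only cosmetic difference is that the paper organizes the recursion in the outer index $n$ (constructing $x^{n+1}\in\ell^\infty(J,E)$ from $x^0,\dots,x^n$) while you organize it in the fibre $j$ (constructing the whole sequence $(i_k(j))_k$ for each $j$ separately); the resulting families and the verification that each diagonal class lies in $\fix(T^p)$ are identical.
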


\begin{proof}
	Let $(x_j)_{j \in J}$ be an approximate eigenvector of $T$ for the spectral element $\mathds{1}_S$. 
	If $(x_j)_{j \in J}$ has a convergent subnet, then the limit is clearly in $\fix(T)$, 
	so assume now for a contradiction that $(x_j)_{j \in J}$ does not have a convergent subnet. 
	By \cref{nets} we then find an $\varepsilon > 0$ such that for every non-empty finite subset $F \subseteq E$ and every $j_0 \in J$ 
	there is some $j \geq j_0$ that satisfies $\min_{x \in F} \|x_j - x\| \geq  \varepsilon$.
	Now we recursively construct a sequence $(x^n)_{n \in \N} = ((x^n_j)_{j \in J})_{n \in \N}$ in $\ell^\infty(J,E)$ 
	that satisfies the following properties: 
	\begin{enumerate}[(i)]
		\item 
		$(x^n)^p \in \fix(T^p)$ with $\|(x^n)^p\| = 1$ for every $n \in \N$;
		
		\item 
		$\|(x^n)^p - (x^m)^p\| \geq \varepsilon$ for all distinct $m,n \in \N$.
	\end{enumerate}
	As the unit sphere of the finite-dimensional space $\fix(T^p)$ is totally bounded, this yields the desired contradiction.
	
	For $n=0$ we take $x^0 \coloneqq (x_j)_{j \in J}$; 
	note that $(x^0)^p \in \fix(T^p)$ by the choice of the ultrafilter $p$.
	Now assume that $x^0, \dots, x^n \in \ell^\infty(J,E)$ have already been constructed for some $n \in \N$. 
	For every $j \in J$ we find some $\phi(j) \in J$ with $\phi(j) \geq j$ such that
	\begin{align*}
		\|x_{\phi(j)} - x_j^k\| \geq \varepsilon \textrm{ for every } k \in \{1, \dots, n\}.
	\end{align*}
	Then, $x^{n+1}\coloneqq (x_{\varphi(j)})_{j \in J}$ is a subnet of $(x_j)_{j \in J}$ 
	and thus also an approximate eigenvector for $T$ with respect to $\mathds{1}_S$. 
	Hence, $(x^{n+1})^p \in \fix(T^p)$ by the choice of the ultrafilter $p$. 
	Moreover, one clearly has $\|(x^{n+1})^p\| = 1$. 
	For all $k \in \{1, \dots, n\}$ the construction of $x^{n+1}$ gives $\|x^{n+1}_j - x_j^k\| \geq \varepsilon$  
	for all $j \in J$ and thus $\|(x^{n+1})^p - (x^k)^p\| \geq \varepsilon$. 
\end{proof}

We finally prove \cref{thm:groh}.

\begin{proof}[Proof of \cref{thm:groh}.]
	Since $\ker(\chi - T)=\mathrm{fix}(\overline{\chi} T)$, 
	\cref{compatibility}\ref{compatibility3} shows that is suffices to prove the result for $\chi = \mathds{1}_S$.

	\ImpliesProof{thm:groha}{thm:grohb} 
	Let $\mathds{1}_S$ be a Riesz point of $T$, 
	let $J \not= \emptyset$, and let $p$ be an ultrafilter on $J$.
	Since $\mathds{1}_S$ is a pole, \cref{fixedspaces} gives $\mathrm{fix}(T^{p}) = \mathrm{fix}(T)^{p}$. 
	As $\mathrm{fix}(T)$ is finite-dimensional, it is isomorphic to $\mathrm{fix}(T)^{p}$ 
	and thus, the latter space is finite-dimensional, too. 
	
	\ImpliesProof{thm:grohb}{thm:groha}
	Assumption~\ref{thm:grohb} implies, in particular, that $\mathrm{fix}(T)$ is finite-dimensional. 
	By \cref{invarproj} we thus find a projection $P \in \mathscr{L}(E)$ onto $\fix(T)$ satisfying $PT_s = T_sP = P$ for all $s \in S$. 
	We show that $\mathds{1}_S \notin \sigma_{\unitary}(T|_{\ker(P)})$ which proves that $\mathds{1}_S$ is a Riesz point of $T$. 
	
	Assuming the contrary, we find an approximate eigenvector $(x_j)_{j \in J_S}$ of $T|_{\ker(P)}$ with respect to $\mathds{1}_S$. 
	Let $p$ be an ultrafilter on $J$ that contains all the tails $\{j \in J \mid j \geq j_0\}$ for $j_0 \in J$.
	Since $\dim \mathrm{fix}(T^{p}) < \infty$ according to~\ref{thm:grohb}, 
	\cref{subnetapprox} shows that $(x_j)_{j \in J}$ has a subnet that converges to a point $x\in \ker(P) \cap \fix(T)$. 
	Clearly, $x \neq 0$, which contradicts the properties of $P$.
\end{proof}

\section{Uniform ergodic theorems}\label{secuniform}

Now we apply the techniques developed so far 
to generalize classical uniform ergodic theorems for power-bounded operators to bounded repesentations of abelian semigroups.

\subsection{Uniformly mean ergodic representations}\label{subsec:erg}

We start with the concept of uniformly mean ergodic semigroups, 
which is discussed in detail in \cite[Chapter~W]{DNP1987}, \cite[Section~V.4]{EN00} and \cite[Paragraph~2.2]{Kren1985}).

\begin{definition}
	\label{def:unif-mean-ergodic}
	Let $E$ be a Banach space. 
	A bounded representation $T \colon S \to \mathscr{L}(E)$ is \notion{uniformly mean ergodic} 
	if the operator norm closed convex hull $\overline{\mathrm{co}}\, T(S)\subseteq \mathscr{L}(E)$ has a zero element $P$, 
	meaning that $RP=PR=P$ for every $R \in \overline{\mathrm{co}}\, T(S)$. 
	In this case the zero element $P$ (which is clearly unique) is called the \notion{mean ergodic projection} of $T$.
\end{definition}

In the case where $S = \big([0,\infty),+\big)$ and $T \colon S \to \mathscr{L}(E)$ is a $C_0$-semigroup, 
one has to be extremely careful not to confuse uniform mean ergodicity as given by Definition~\ref{def:unif-mean-ergodic} 
with the common definition of uniform mean ergodicity in $C_0$-semigroup theory; 
we discuss this distinction in detail in Remark~\ref{rem:rotation-sg} below. 

For $S = (\N,+)$ and $(T_1^s)_{s \in \N}$ though, there is no such problem 
and uniform mean ergodicity as defined in Definition~\ref{def:unif-mean-ergodic} can be checked to be equivalent 
to the common definition that the Cesàro means $\frac{1}{n} \sum_{k=0}^{n-1} T_1^k$ converge with respect to the operator norm 
as $n \to \infty$.

\begin{lemma}\label{lemmeanergproj}
	Let $E$ be a Banach space and let $T \colon S \to \mathscr{L}(E)$ be a bounded representation.
	If $T$ is uniformly mean ergodic, then its mean ergodic projection $P$ 
	is a projection onto the fixed space of $T$.
\end{lemma}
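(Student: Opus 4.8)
The plan is to verify two things: that $P$ is genuinely a projection, and that its range coincides with $\fix(T)$. The latter I would establish by proving the two inclusions $\ran(P) \subseteq \fix(T)$ and $\fix(T) \subseteq \ran(P)$ separately. Throughout, the essential input is the defining property of the zero element, namely $RP = PR = P$ for every $R \in \overline{\mathrm{co}}\, T(S)$, together with the fact that $P$ itself lies in $\overline{\mathrm{co}}\, T(S)$.

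First I would observe that $P$ is a projection: since $P \in \overline{\mathrm{co}}\, T(S)$, applying the zero-element property with $R = P$ gives $P^2 = PP = P$, so $P$ is idempotent and hence $\ran(P) = \{x \in E \mid Px = x\}$. For the inclusion $\ran(P) \subseteq \fix(T)$, I would take $x \in \ran(P)$, so that $Px = x$, and note that each $T_s$ belongs to $T(S) \subseteq \overline{\mathrm{co}}\, T(S)$; hence $T_s P = P$, which yields $T_s x = T_s P x = P x = x$ for every $s \in S$, so $x \in \fix(T)$. For the reverse inclusion $\fix(T) \subseteq \ran(P)$, I would take $x \in \fix(T)$ and first check that every finite convex combination $R = \sum_{k=1}^n \mu_k T_{s_k}$ satisfies $Rx = \sum_{k=1}^n \mu_k T_{s_k} x = \big(\sum_{k=1}^n \mu_k\big) x = x$; passing to the operator-norm closure, continuity of $R \mapsto Rx$ gives $Rx = x$ for all $R \in \overline{\mathrm{co}}\, T(S)$, and in particular $Px = x$, so $x \in \ran(P)$.

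Combining the two inclusions gives $\ran(P) = \fix(T)$, and since $P$ was already shown to be idempotent, it is a projection onto the fixed space, as claimed.

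There is no serious obstacle here; the argument is essentially a direct unwinding of the definition of the mean ergodic projection. The only point requiring a modicum of care is the passage from convex combinations to their closure in the second inclusion, where one must invoke the norm-continuity of the evaluation map $R \mapsto Rx$ to transfer the fixed-point property from $\mathrm{co}\, T(S)$ to $\overline{\mathrm{co}}\, T(S)$.
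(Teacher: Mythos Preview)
Your proof is correct and follows essentially the same approach as the paper's: both establish $P^2=P$ from the zero-element property applied to $R=P$, derive $\ran(P)\subseteq\fix(T)$ from $T_sP=P$, and obtain $\fix(T)\subseteq\ran(P)$ from $P\in\overline{\mathrm{co}}\,T(S)$. You have simply spelled out in detail the closure argument that the paper leaves implicit.
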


\begin{proof}
	Since $P$ is a zero element of $\overline{\mathrm{co}}\, T(S)$, we have $P^2=P$. 
	Moreover, $T_s P = P$ for all $s \in S$ implies that $\mathrm{rg}(P) \subseteq \fix(T)$ 
	and conversely, $P \in \overline{\mathrm{co}}\,T(S)$ implies $\mathrm{fix}(T) \subseteq \mathrm{rg}(P)$.
\end{proof}

The following result provides several characterizations of uniformly mean ergodic representations; 
we refer to \cite[Theorem~W.3]{DNP1987} and \cite[Paragraph 2.2]{Kren1985} for similar characterizations in the case of single operators. 
For a Banach space $E$, a representation $T \colon S \rightarrow \mathscr{L}(E)$, and a semigroup character $\chi \in \dual{S}$ we write
\begin{align*}
	\mathrm{rg}(\chi - T) 
	\coloneqq 
	\sum_{s \in S} \mathrm{rg}(\chi(s) - T_s)
	,
\end{align*}
where the sum denotes the subspace that consist of all finite sums of elements of the spaces $\mathrm{rg}(\chi(s) - T_s)$ for $s \in S$.

\begin{theorem}\label{charunierg}
	Let $E$ be a Banach space. 
	For a bounded representation $T \colon S \rightarrow \mathscr{L}(E)$ the following conditions are equivalent.
	\begin{enumerate}[(a)]
		\item\label{charunierga} 
		$T$ is uniformly mean ergodic.
		
		\item\label{charuniergb} 
		There exists an ergodic net for $T$ that converges in operator norm.
	
		\item\label{charuniergc} 
		Every ergodic net for $T$ converges in operator norm.
		
		\item\label{charuniergd} 
		The semigroup character $\mathds{1}_S \in \unitaryDual{S}$ is a pole of $T$.
		
		\item\label{charunierge} 
		$\mathrm{rg}(\mathds{1}_S - T)$ is closed in $E$ 
		and there are $s_1, \dots, s_n \in S$ such that
		\begin{align*}
			\mathrm{rg}(\mathds{1}_S - T) = \sum_{k=1}^n \mathrm{rg}(1 - T_{s_k})
		\end{align*}
	\end{enumerate}
	If the equivalent conditions~{\upshape\ref{charunierga}}--{\upshape\ref{charunierge}} are satisfied, 
	then the following assertions hold.
	\begin{enumerate}[(i)]
		\item\label{charuniergi} 
		Every ergodic net for $T$ converges in operator norm to the mean ergodic projection.
		
		\item\label{charuniergii} 
		The projection $P \in \mathscr{L}(E)$ that exists according to \cref{def:pole-riesz} 
		since $\mathds{1}_S \in \unitaryDual{S}$ is a pole, 
		is uniquely determined and coincides with the mean ergodic projection. 
		
		\item\label{charuniergiii} 
		The space $E$ can be decomposed as $E = \fix T \oplus \mathrm{rg}(\mathds{1}_S - T)$.
	\end{enumerate}
\end{theorem}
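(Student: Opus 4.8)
The plan is to establish the cycle of implications (a)$\Rightarrow$(d)$\Rightarrow$(c)$\Rightarrow$(b)$\Rightarrow$(a), which settles the equivalence of (a)--(d), and then to incorporate (e) by proving (a)$\Rightarrow$(e) and (e)$\Rightarrow$(b); the three supplementary assertions (i)--(iii) will drop out of the arguments along the way. Throughout, the central object is an ergodic net $(c_j)$ for $T$, which exists by \cref{ergodicnet} and whose defining property is that $(\one - T_s)c_j \to 0$ for every $s \in S$. Two elementary facts will be used repeatedly: since $S$ is commutative, every $c_j \in \overline{\co}\,T(S)$ commutes with each $T_s$; and whenever the pole projection $P$ or the mean ergodic projection exists it satisfies $PT_s = T_sP = P$, so that $Pc_j = c_jP = P$ for every $j$.

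I would dispatch the ergodic-net implications as follows. For (c)$\Rightarrow$(b) I only invoke the existence of an ergodic net. For (b)$\Rightarrow$(a), if an ergodic net $(c_j)$ converges in norm to some $P$, then $P \in \overline{\co}\,T(S)$; the relation $(\one - T_s)c_j \to 0$ forces $T_sP = P$, while commutativity gives $c_jT_s = T_sc_j$ and hence $PT_s = P$, so $P$ is the zero element and $T$ is uniformly mean ergodic with mean ergodic projection $P$, which also yields assertion (i). For (a)$\Rightarrow$(d), I take the mean ergodic projection $P$, which projects onto $\fix(T) = \ker(\mathds{1}_S - T)$ by \cref{lemmeanergproj} and commutes with every $T_s$; it remains to check $\mathds{1}_S \notin \sigma_{\unitary}(T|_{\ker P})$. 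Assuming otherwise, \cref{charspec2} gives a normalized approximate eigenvector $(x_i) \subseteq \ker P$ with $\|T_sx_i - x_i\| \to 0$ for every $s$; approximating $P$ by a finite convex combination $c = \sum_k \mu_k T_{s_k}$ and using $Px_i = 0$, one gets $\|x_i\| \le \|x_i - cx_i\| + \|c - P\|$, and letting $i$ run for the fixed finite family $\{s_k\}$ contradicts $\|x_i\| = 1$. The heart of the cycle is (d)$\Rightarrow$(c): writing $R := T|_{\ker P}$, the pole hypothesis gives $\mathds{1}_S \notin \sigma_{\unitary}(R)$, so by \cref{prop:charspec} there are $a_1,\dots,a_n$ and $s_1,\dots,s_n$ with $\sum_k a_k(\one - R_{s_k}) = \one$; for any ergodic net $(c_j)$ the restriction $d_j := c_j|_{\ker P}$ is an ergodic net for $R$, and the identity gives $d_j = \sum_k a_k(\one - R_{s_k})d_j \to 0$. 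Since $c_j - P = (c_j - P)(\one - P)$ acts on $\ker P$ as $d_j$, one concludes $\|c_j - P\| \le \|\one - P\|\,\|d_j\| \to 0$, so every ergodic net converges to $P$; with (b)$\Rightarrow$(a) this identifies $P$ as the unique mean ergodic projection, giving (i) and (ii).

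To incorporate (e), for (a)$\Rightarrow$(e) I would first note $\mathrm{rg}(\one - T_s) \subseteq \ker P$ for every $s$, whence $\overline{\mathrm{rg}(\mathds{1}_S - T)} \subseteq \ker P$, while $x = \lim_j(\one - c_j)x$ for $x \in \ker P$ gives the reverse inclusion; thus $\ker P = \overline{\mathrm{rg}(\mathds{1}_S - T)}$ and $E = \fix(T) \oplus \overline{\mathrm{rg}(\mathds{1}_S - T)}$. Applying the right-ideal form of \cref{prop:charspec} to $R$ produces $t_1,\dots,t_n$ with $\ker P = \sum_k \mathrm{rg}(\one - R_{t_k}) = \sum_k \mathrm{rg}(\one - T_{t_k})$ (using that $\one - T_{t_k}$ vanishes on $\fix(T)$), and since this already lies inside $\mathrm{rg}(\mathds{1}_S - T) \subseteq \ker P$, all these spaces coincide; this proves (e) and assertion (iii). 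For the closing implication (e)$\Rightarrow$(b): with $F := \mathrm{rg}(\mathds{1}_S - T) = \sum_{k=1}^n \mathrm{rg}(\one - T_{s_k})$ closed, the open mapping theorem applied to $(y_k) \mapsto \sum_k(\one - T_{s_k})y_k$ yields a constant $C$ with $\|c_j z\| \le C\big(\max_k\|(\one - T_{s_k})c_j\|\big)\|z\|$ for all $z \in F$, so $\|c_j|_F\| \to 0$; combining this with $(\one - c_i)x \in F$, $c_ic_j = c_jc_i$, and the telescoping identity $(c_j - c_i)x = c_j(\one - c_i)x - c_i(\one - c_j)x$ shows $(c_j)$ is norm-Cauchy, hence convergent.

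The main obstacle I anticipate is the step (e)$\Rightarrow$(b): the mere closedness of $\mathrm{rg}(\mathds{1}_S - T)$ is not enough, and it is precisely the finite generation in (e) that lets the open mapping theorem deliver a bound on $\|c_jz\|$ uniform over the unit ball of $F$; converting this into norm convergence of the whole net then relies on the commutativity-driven telescoping estimate. A second delicate point is the interchange of limits in (a)$\Rightarrow$(d), where one must freeze a finite convex combination approximating $P$ before letting the approximate-eigenvector index run.
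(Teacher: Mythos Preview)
Your proof is correct. The overall architecture is close to the paper's, but two steps diverge in interesting ways.

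For (a)$\Rightarrow$(d) you argue via approximate eigenvectors (\cref{charspec2}) and the fact that $P$ lies in $\overline{\co}\,T(S)$, whereas the paper uses the algebra-character description of $\sigma_\unitary$ (\cref{homeomorph}): if $\mathds{1}_S$ were in $\sigma_\unitary(T|_{\ker P})$, the corresponding character $\psi$ would satisfy $\psi(R|_{\ker P})=1$ for every $R\in\overline{\co}\,T(S)$, hence $\psi(0)=\psi(P|_{\ker P})=1$. Both arguments exploit $P\in\overline{\co}\,T(S)$; yours is slightly more hands-on, the paper's slightly shorter.

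The more substantial difference is in closing the loop from (e). The paper proves (e)$\Rightarrow$(c) by first getting $C_j|_{E_0}\to 0$ via the open mapping theorem (as you do), but then bootstraps: it feeds this back through the already-established implications (b)$\Leftrightarrow$(a)$\Rightarrow$(d) applied to $T|_{E_0}$ to conclude $\mathds{1}_S\notin\sigma_\unitary(T|_{E_0})$, then uses the left-ideal criterion to manufacture a projection $Q$ onto $E_0$ and shows $C_j\to\one-Q$ directly. Your telescoping/Cauchy argument $(c_j-c_i)=c_j(\one-c_i)-c_i(\one-c_j)$ is more self-contained and avoids this recursion; it trades the explicit identification of the limit for a cleaner convergence proof. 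Conversely, the paper's route has the advantage of producing the limiting projection $\one-Q$ explicitly from (e) alone, without appealing to completeness. Your separate direct implication (d)$\Rightarrow$(c) via the left-ideal identity on $\ker P$ is also not in the paper's chain (the paper goes (d)$\Rightarrow$(e)$\Rightarrow$(c)), and it makes the derivation of (i) and (ii) particularly transparent.
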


For the proof we use the following lemma which is a direct consequence 
of \cref{prop:charspec}\ref{prop:charspec:itm:ideal-right}.

\begin{lemma}\label{rangesum}
	Let $E$ be a Banach space, let $T \colon S \rightarrow \mathscr{L}(E)$ be a bounded representation, 
	and assume that $\mathds{1}_S \notin \sigma_{\unitary}(T)$. 
	Then there are $s_1, \dots, s_n \in S$ with
	\begin{align*}
		E = \sum_{k=1}^n \mathrm{rg}(1 - T_{s_k}). 
	\end{align*}
\end{lemma}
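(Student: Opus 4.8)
The plan is to read off the conclusion directly from the contrapositive of condition~\ref{prop:charspec:itm:ideal-right} in \cref{prop:charspec}. By \cref{def:spectrum}, the character $\mathds{1}_S$ lies in $\sigma_{\unitary}(T)$ precisely when the equivalent conditions of \cref{prop:charspec} hold for $\chi = \mathds{1}_S$. Since we are assuming $\mathds{1}_S \notin \sigma_{\unitary}(T)$, all of these conditions fail; in particular, the negation of~\ref{prop:charspec:itm:ideal-right} holds. Reading off that negation, the right ideal in $A = \mathscr{L}(E)$ generated by $\{\mathds{1}_S(s) - T_s : s \in S\} = \{1 - T_s : s \in S\}$ is \emph{all} of $\mathscr{L}(E)$: there exist operators $a_1, \dots, a_n \in \mathscr{L}(E)$ and elements $s_1, \dots, s_n \in S$ with
\begin{align*}
	\sum_{k=1}^n (1 - T_{s_k}) a_k = \mathrm{Id}_E.
\end{align*}

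The second step is to turn this operator identity into the desired range decomposition by evaluating it at an arbitrary vector. For each $x \in E$ one obtains
\begin{align*}
	x = \sum_{k=1}^n (1 - T_{s_k})(a_k x),
\end{align*}
and each summand $(1 - T_{s_k})(a_k x)$ lies in $\mathrm{rg}(1 - T_{s_k})$. Hence $x \in \sum_{k=1}^n \mathrm{rg}(1 - T_{s_k})$. Since $x$ was arbitrary, this gives the inclusion $E \subseteq \sum_{k=1}^n \mathrm{rg}(1 - T_{s_k})$; the reverse inclusion is trivial because each $\mathrm{rg}(1 - T_{s_k}) \subseteq E$. Together these yield $E = \sum_{k=1}^n \mathrm{rg}(1 - T_{s_k})$ with exactly the $s_1, \dots, s_n$ produced above, which is the claim.

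I do not anticipate any genuine obstacle here: the entire content is packaged in the equivalence of \cref{prop:charspec}, and the only point requiring care is the bookkeeping of the logical direction. One must note that, because $\sigma_{\unitary}(T)$ is \emph{defined} by the conditions of \cref{prop:charspec} holding, the hypothesis $\mathds{1}_S \notin \sigma_{\unitary}(T)$ is used as the failure of those conditions, and one then extracts the correct negation of the right-ideal statement~\ref{prop:charspec:itm:ideal-right} (namely that the generated right ideal equals the whole algebra, so that $\one = \mathrm{Id}_E$ is attainable as such a finite right-combination). Everything after that is the one-line evaluation at a vector.
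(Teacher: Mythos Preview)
Your proof is correct and follows exactly the approach indicated in the paper, which simply states that the lemma is a direct consequence of \cref{prop:charspec}\ref{prop:charspec:itm:ideal-right}. You have spelled out precisely the intended argument: negate condition~\ref{prop:charspec:itm:ideal-right} to obtain the identity $\sum_{k=1}^n (1-T_{s_k})a_k = \mathrm{Id}_E$, then evaluate at each $x \in E$.
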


\begin{proof}[Proof of \cref{charunierg}.]
	The equivalence of assertions \ref{charunierga}, \ref{charuniergb} and \ref{charuniergc} is already known 
	(see, e.g., \cite[Theorem 2.2]{Kren1985}), but we include short arguments for them for the reader's convenience. 
	
	\ImpliesProof{charuniergc}{charuniergb}
	This implication holds since there exists an ergodic net for $T$ according to \cref{ergodicnet}.
	
	\ImpliesProof{charuniergb}{charunierga} 
	This follows from the fact that the limit of a convergent ergodic net 
	is a zero element for $\overline{\mathrm{co}}\, T(S)$, which is easy to check.
	
	\ImpliesProof{charunierga}{charuniergc}
	Let $P \in \overline{\mathrm{co}}\, T(S)$ be the mean ergodic projection and assume that $(C_j)_{j \in J}$ is an ergodic net for $T$. 
	Since $\lim_{j}(1 - T_s)C_j \rightarrow 0$ for every $s \in S$, 
	we also obtain $\lim_{j} (1 - R)C_j = 0$ for every $R \in \overline{\mathrm{co}}\, T(S)$ by approximation. 
	In particular, $\lim_{j} (1 - P)C_j = 0$. 
	On the other hand, $PC_j = P$ for every $j \in J$, and thus $\lim_{j} C_j = P$. 
	
	\ImpliesProof{charunierga}{charuniergd}
	Let $T$ be uniformly mean ergodic with mean ergodic projection $P \in \overline{\mathrm{co}}\, T(S)$. 
	We show that $\mathds{1}_S \notin \sigma_{\unitary}(T|_{\ker(P)})$ which implies that $\mathds{1}_S$ is a pole of $T$. 
	Assume conversely that $\mathds{1}_S \in \sigma(T|_{\ker(P)})$. 
	According to \cref{homeomorph} we can thus find an algebra character $\psi \in \Gamma_{\mathrm{A}(T|_{\ker(P)}(S))}$ 
	that satisfies $\psi(T_s|_{\ker(P)}) = 1$ for all $s \in S$. 
	Then also $\psi(R|_{\ker(P)}) = 1$ for every $R \in \overline{\mathrm{co}}\, T(S)$, 
	hence $\psi(0_{\ker(P)}) = \psi(P|_{\ker(P)}) =1$, a contradiction.

	\ImpliesProof{charuniergd}{charunierge} 
	Let $\mathds{1}_S \in \unitaryDual{S}$ be a pole of $T$, 
	which means that there exists a projection $P \in \mathscr{L}(E)$ onto $\fix(T)$ 
	that satisfies $PT_s = T_sP$ for all $s \in S$ and $\mathds{1}_S \notin \sigma_{\unitary}(T|_{\ker(P)})$. 
	By \cref{rangesum} we then find $s_1, \dots, s_n \in S$ 
	with $\ker(P) \subseteq \sum_{k=1}^n \mathrm{rg}(1-T_{s_k}) \subseteq \mathrm{rg}(\mathds{1}_S - T) $. 
	On the other hand, we have
	\begin{align*}
		P(1 - T_s) = P - PT_s = 0
	\end{align*}
	for each $s \in S$, hence $\mathrm{rg}(\mathds{1}_S - T) \subseteq \ker(P)$. 
	Consequently, $\ker(P) = \mathrm{rg}(\mathds{1}_S - T) = \sum_{k=1}^n\mathrm{rg}(1-T_{s_k})$. 
	This shows~\ref{charunierge} since $\ker P$ is closed. 
	
	\ImpliesProof{charunierge}{charuniergc}
	Assume that $E_0 \coloneqq \mathrm{rg}(\mathds{1}_S - T)$ is closed 
	and that there are $s_1, \dots, s_n \in S$ with $E_0 = \sum_{k=1}^n \mathrm{rg}(1 - T_{s_k})$. 
	Let $(C_j)_{j \in J}$ be an ergodic net for $T$. 
	The surjective bounded linear map
	\begin{align*}
		E^n \rightarrow E_0,\quad (x_1, \dots, x_n) \mapsto \sum_{k=1}^n (1-T_{s_k})x_j
	\end{align*}
	is open by the open mapping theorem where the norm on $E^n$ 
	is given by $\|(x_1, \dots, x_n)\| \coloneqq \max_{j =1, \dots, n} \|x_j\|$ for $(x_1, \dots, x_n) \in E^n$. 
	Thus, we can choose $c >0$ such that for every $y \in E_0$ of norm $\|y\| \le 1$
	we find $(x_1, \dots, x_n)$ such that $y=\sum_{k=1}^n(1-T_{s_k})x_k$ and $\max_{k=1, \dots, n} \|x_k\| \leq c$. 
	For each such $y$ this gives
	\begin{align*}
		\|C_j y\| 
		= 
		\left\|\sum_{k=1}^n C_j(1 - T_{s_k})x_k\right\| 
		\leq 
		c n \; \max_{k=1, \dots,n}\| C_j(1 - T_{s_k})\| 
	\end{align*}
	for every $j \in J$.
	Thus, $\|C_j|_{E_0}\| \leq c n \max_{k=1,\dots, n}\| C_j(1 - T_{s_k})\|$ for each $j \in J$ 
	and therefore $\lim_j {C_j}|_{E_0} = 0$. 
	This implies that $\fix(T|_{E_0}) = 0$ and, 
	by the equivalence of \ref{charunierga} and \ref{charuniergb}, 
	it also gives that the restricted representation $T|_{E_0}$ is uniformly mean ergodic. 
	Hence, we conclude from the implication ``\ref{charunierga} $\Rightarrow$ \ref{charuniergd}'' proved above 
	that $\mathds{1}_S \notin \sigma_{\unitary}(T|_{E_0})$. 
	Thus, using \cref{prop:charspec}\ref{prop:charspec:itm:ideal} 
	we find operators $R_1, \dots, R_m \in \mathrm{A}(T|_{E_{0}}(S)) \subseteq \mathscr{L}(E_0)$ 
	as well as $s_1, \dots, s_m \in S$ such that  
	\begin{align*}
		\mathrm{Id}_{E_0} = \sum_{k=1}^m R_k (1 - T_{s_k}|_{E_0}).
	\end{align*}
	Hence, the operator $Q \coloneqq \sum_{k=1}^m R_k (1 - T_{s_k}) \in \mathscr{L}(E)$ 
	-- which is well-defined and maps into $E_0$ as all the $1 - T_{s_k}$ map into $E_0$ -- 
	is a projection with range $E_0$. 
	For each $j \in J$ one has $\mathrm{rg}(1 - C_j) \subseteq E_0$ since $E_0$ is closed,  
	and thus $(1 - C_j) = Q(1 - C_j)$.
	This yields
	\begin{align*}
		C_j - (1 - Q) = Q C_j  = \sum_{k=1}^m R_k (1 - T_{s_k})C_j \rightarrow 0.
	\end{align*}
	so~\ref{charuniergc} holds. 
	
	Now assume that the equivalent conditions~\ref{charunierga}--\ref{charunierge} hold. 
	We prove~\ref{charuniergi},~\ref{charuniergii} and \ref{charuniergiii}.
	
	\ref{charuniergi}\color{red}$\colon$\color{black}
	This follows from the proof of the implication~``\ref{charunierga} $\Rightarrow$ \ref{charuniergc}'' above. 
	
	\ref{charuniergii}~and~\ref{charuniergiii}\color{red}$\colon$\color{black}
	Let $P$ be as in Definition~\ref{def:pole-riesz}, 
	i.e., $P$ is a projection onto $\ker(\one_S - T) = \fix(T)$, 
	it commutes with all the operators $T_s$ for $s \in S$, 
	and $\one_S \not\in \sigma_{\unitary}(T|_{\ker P})$. 
	The proof of the implication~``\ref{charuniergd} $\Rightarrow$ \ref{charunierge}'' 
	shows that $\ker P = \mathrm{rg}(\mathds{1}_S - T)$, 
	which proves~\ref{charuniergiii}. 
	
	Now take an ergodic net $(C_j)_{j \in J}$ for $T$. 
	Then each operator $C_j$ acts as the identity on $\fix T = \mathrm{rg}(P)$. 
	On the other hand, the proof of the implication~``\ref{charunierge} $\Rightarrow$ \ref{charuniergc}'' 
	shows that $(C_j)_{j \in J}$ converges to $0$ in operator norm on $\mathrm{rg}(\mathds{1}_S - T) = \ker P$.
	Hence, $(C_j)_{j \in J}$ converges in operator norm to $P$. 
	But we already know from~\ref{charuniergi} that $(C_j)_{j \in J}$ converges to the mean ergodic projection.
\end{proof}

\begin{corollary}
	\label{cor:conv-mean-ergodic}
	Let $E$ be a Banach space and let $T \colon S \rightarrow \mathscr{L}(E)$ be a bounded representation. 
	If $(T_s)_{s \in S}$ converges in operator norm to a an operator $P \in \mathscr{L}(E)$, 
	then $T$ is uniformly mean ergodic, $P$ is the mean ergodic projection of $T$, 
	and every ergodic net for $T$ converges in operator norm to $P$.
\end{corollary}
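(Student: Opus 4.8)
The plan is to exhibit the convergent net $(T_s)_{s \in S}$ itself as an ergodic net for $T$ and then to read off everything from \cref{charunierg}. Recall that an ergodic net is a net in $\overline{\co}\, T(S)$ along which $(\one - T_t)(\,\cdot\,)$ tends to $0$ for every $t \in S$. Since each $T_s$ lies in $T(S) \subseteq \overline{\co}\, T(S)$, the net $(T_s)_{s \in S}$ is automatically located in the right set, so the only thing that needs checking is the ergodicity condition.

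The key preparatory observation concerns reindexing. For a fixed $t \in S$ and the order $\le$ from \cref{assu:standing}, one has $t + s \ge s_0$ whenever $s \ge s_0$: indeed $s \ge s_0$ means $s_0 + r = s$ for some $r \in S$, whence $t + s = s_0 + (t + r) \ge s_0$. Consequently, since $(T_s)_{s \in S}$ converges in operator norm to $P$, the reindexed net $(T_{t+s})_{s \in S}$ converges to $P$ as well: given $\varepsilon > 0$, choose $s_0$ with $\|T_s - P\| \le \varepsilon$ for all $s \ge s_0$; then $t + s \ge s_0$, and hence $\|T_{t+s} - P\| \le \varepsilon$, for all $s \ge s_0$. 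With this at hand the ergodicity condition follows immediately: for every $t \in S$,
\begin{align*}
	(\one - T_t) T_s = T_s - T_{t+s},
\end{align*}
and as both $(T_s)_{s \in S}$ and $(T_{t+s})_{s \in S}$ converge in operator norm to $P$, this difference tends to $0$. Thus $(T_s)_{s \in S}$ is an ergodic net for $T$ that converges in operator norm.

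This places us in condition~\ref{charuniergb} of \cref{charunierg}, so by the equivalence with~\ref{charunierga} the representation $T$ is uniformly mean ergodic. Assertion~\ref{charuniergi} of the same theorem then says that \emph{every} ergodic net for $T$ converges in operator norm to the mean ergodic projection; applying this to our ergodic net $(T_s)_{s \in S}$ and using uniqueness of operator-norm limits forces the mean ergodic projection to equal $P$. This simultaneously identifies $P$ as the mean ergodic projection and establishes that every ergodic net converges to $P$, which is exactly the three claims of the corollary. The only point requiring any care is the reindexing step of the second paragraph, where one must use the semigroup structure of $\le$ to guarantee that $(T_{t+s})_{s}$ really converges to the same limit $P$; once that is in place, the statement is a direct consequence of the already-proved equivalences in \cref{charunierg}.
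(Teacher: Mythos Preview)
Your proof is correct. The paper takes a slightly different (and marginally more direct) route: rather than exhibiting $(T_s)_{s\in S}$ as a convergent ergodic net and invoking \cref{charunierg}\ref{charuniergb}, it verifies Definition~\ref{def:unif-mean-ergodic} head-on by checking that $P$ is a zero element of $\overline{\co}\,T(S)$, i.e., that $PT_s = T_sP = P$ for all $s \in S$ (and hence $PR = RP = P$ for every $R \in \overline{\co}\,T(S)$ by linearity and continuity). The underlying computation is identical to yours---your reindexing observation that $(T_{t+s})_s$ also converges to $P$ is exactly what gives $T_tP = \lim_s T_tT_s = \lim_s T_{t+s} = P$---so the two arguments differ only in which characterization of uniform mean ergodicity they target. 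Your route has the minor advantage that the identification of $P$ with the mean ergodic projection drops out of \cref{charunierg}\ref{charuniergi} without separately checking $P^2 = P$; the paper's route has the minor advantage of not having to name the ergodic-net condition at all.
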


\begin{proof}
	It is easy to check that $P$ is a projection and that $P T_s = T_s P = P$ for all $s \in S$. 
	This implies that the assumptions of Definition~\ref{def:unif-mean-ergodic} are satisfied, 
	i.e., $T$ is uniformly mean ergodic with mean ergodic projection $P$. 
	\cref{charunierg}\ref{charuniergi} thus gives that every ergodic net for $T$ converges in operator norm to $P$.
\end{proof}

The following example shows that in the situation of \cref{charunierg} the condition that $\mathrm{rg}(\mathds{1}_S - T)$ is closed 
does not imply uniform mean ergodicity of $T$ -- so the second part of condition~\ref{charunierge} cannot be dropped.

\begin{example}
	\label{exa:intersection-sg}
	Consider an uncountable set $X$ and the set $\mathcal{P}_{\mathrm{coc}}(X)$ of all cocountable subsets of $X$. 
	Equipped with intersection, this becomes an abelian semigroup. 
	Now consider the Banach space $\ell^1(X)$ of all absolutely summable sequences indexed over $X$. 
	For every $M \in \mathcal{P}_{\mathrm{coc}}(X)$ we consider the multiplication operator 
	$T_M \in \mathscr{L}(\ell^1(X))$ defined by $T_Ma \coloneqq \mathbbm{1}_M a$ 
	for all $a \in \ell^1(X)$. 
	Then $\mathcal{P}_{\mathrm{coc}}(X) \rightarrow \mathscr{L}(\ell^1(X)), \, M \mapsto T_M$ 
	is a bounded representation of $\mathcal{P}_{\mathrm{coc}}(X)$. 
	Moreover, $\mathrm{rg}(\mathds{1}_S - T) = \ell^1(X)$ since every $a \in  \ell^1(X)$ vanishes outside of a countable subset of $X$. 
	In particular, $\mathrm{rg}(\mathds{1}_S - T)$ is closed. 
	However, $\sum_{k=1}^n \mathrm{rg}(1 - T_{M_k}) \neq \ell^1(X)$ 
	for all $M_1, \dots, M_n \in \mathcal{P}_{\mathrm{coc}}(X)$ since $X$ is uncountable.
\end{example}

\subsection{Totally uniformly mean ergodic representations}\label{subsec:total}

For a power-bounded operator $R \in \mathscr{L}(E)$ it is known that $T$ is uniformly almost periodic, 
i.e., the set $\{R^n \mid n \in \N\}$ is a relatively compact subset of $\mathscr{L}(E)$, 
if and only if $T$ is totally uniformly mean ergodic, 
i.e., all rotated operators $\lambda T$ for $\lambda \in \T$ are uniformly mean ergodic 
(see \cite[Lemma~2.12 and Proposition~2.13]{DoGl2021}). 
In Theorem~\ref{chartotallyuniform} below we generalize this result to bounded representations of abelian semigroups.

\begin{definition}
	\label{def:totally-erg}
	Let $E$ be a Banach space. 
	A bounded representation $T \colon S \to \mathscr{L}(E)$ is called \notion{totally uniformly mean ergodic} 
	if $\chi T$ is uniformly mean ergodic for every $\chi \in \unitaryDual{S}$.
\end{definition}

In general we cannot expect that a uniformly mean ergodic representation $T \colon S \rightarrow \mathscr{L}(E)$ 
of an abelian semigroup $S$ has a relatively compact range. 
Instead we use the \notion{semigroup at infinity}.
This notion was introduced in \cite{GlHa2019} for the strong operator topology. 
In \cite{DoGl2021} the same concept was studied for the operator norm topology, in which case it is defined as follows.

\begin{definition}
	\label{def:asymp}
	Let $E$ be a Banach space and let $T \colon S \to \mathscr{L}(E)$ be a represenation. 
	\begin{align*}
		\mathscr{T}_\infty 
		\coloneqq 
		\bigcap_{s_0 \in S} \overline{ \{T_s \mid s \ge s_0\} }
		,
	\end{align*}
	where the closure in taken in the operator norm, is called the \notion{semigroup at infinity} of $T$.
\end{definition}

One can check that the semigroup at infinity consists precisely of the limits of all convergent subnets of $(T_s)_{s \in S}$.
It is easy to see that the semigroup at infinity is a subsemigroup of the multiplicative semigroup $\mathscr{L}(E)$. 

We will also use the notion of a semigroup ideal in the next theorem. 
Recall here that a subset $I$ of a commutative semigroup $(C,+)$ is called 
an \notion{ideal} if $I$ is non-empty and satisfies $c + d \in I$ for all $c \in C$ and $d \in I$.

\begin{theorem}\label{chartotallyuniform}
	Let $E$ be a Banach space.
	For a bounded representation $T \colon S \rightarrow \mathscr{L}(E)$ the following assertions are equivalent.
	\begin{enumerate}[(a)]
		\item\label{chartotallyuniform-sg-infty} 
		The semigroup at infinity $\mathscr{T}_\infty$ of $T$ is non-empty and operator norm compact. 
		
		\item\label{chartotallyuniform-subnet} 
		Every subnet of $(T_s)_{s \in S}$ has a subnet that converges in operator norm. 
		
		\item\label{chartotallyuniform-ideal} 
		The semigroup $\overline{T(S)}$ contains an ideal that is compact in operator norm.
		
		\item\label{chartotallyuniform-constr}
		There exists a non-empty set $C \subseteq \mathscr{L}(E)$ 
		that is relatively compact in operator norm and that satisfies $\operatorname{dist}(T_s,C) \overset{s}{\to} 0$.
		
		\item\label{chartotallyuniform-erg} 
		The representation $T$ is totally uniformly mean ergodic.
		
		\item\label{chartotallyuniform-poles}
		The unitary spectrum $\sigma_{\unitary}(T)$ consists of poles of $T$.
		
		\item\label{chartotallyuniform-decomp} 
		There exists a decomposition $E = E_{\mathrm{r}} \oplus E_{\mathrm{s}}$ of $E$ into closed $T$-invariant subspaces such that
		\begin{itemize}
			\item 
			$E_{\mathrm{r}} = \bigoplus_{k=1}^n \ker(\chi_k - T)$ for some $\chi_1, \dots, \chi_n \in \unitaryDual{S}$, and
			
			\item 
			the net $\big( T|_{E_{\mathrm{s}}} \big)_{s \in S}$ converges to $0$ with respect to the operator norm.
		\end{itemize}
	\end{enumerate}
\end{theorem}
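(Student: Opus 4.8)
The plan is to prove this chain of equivalences by combining a cycle of implications among the "topological/dynamical" conditions \ref{chartotallyuniform-sg-infty}--\ref{chartotallyuniform-constr} with a separate analysis linking these to the "ergodic/spectral" conditions \ref{chartotallyuniform-erg}--\ref{chartotallyuniform-decomp}. First I would establish the equivalence of \ref{chartotallyuniform-sg-infty}, \ref{chartotallyuniform-subnet}, \ref{chartotallyuniform-ideal}, and \ref{chartotallyuniform-constr}, which are all ways of expressing that the orbit net $(T_s)_{s \in S}$ is eventually relatively compact. The implication \ref{chartotallyuniform-subnet}$\Rightarrow$\ref{chartotallyuniform-sg-infty} uses that $\mathscr{T}_\infty$ is exactly the set of subnet limits together with the directedness of $S$: each tail $\{T_s \mid s \ge s_0\}$ is relatively compact, and the intersection of these nested compacta is non-empty and compact. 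For \ref{chartotallyuniform-sg-infty}$\Rightarrow$\ref{chartotallyuniform-constr} one takes $C \coloneqq \mathscr{T}_\infty$ and argues that $\operatorname{dist}(T_s, \mathscr{T}_\infty) \to 0$: otherwise there is a subnet staying away from $\mathscr{T}_\infty$, which by compactness of a tail would have a convergent sub-subnet whose limit lies in $\mathscr{T}_\infty$, a contradiction. The implication \ref{chartotallyuniform-constr}$\Rightarrow$\ref{chartotallyuniform-subnet} is a routine total-boundedness argument, and \ref{chartotallyuniform-ideal} fits into this cycle since $\mathscr{T}_\infty$ is itself an ideal of $\overline{T(S)}$.

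The bridge to the ergodic conditions runs through \ref{chartotallyuniform-erg}$\Leftrightarrow$\ref{chartotallyuniform-poles}. By Theorem~\ref{charunierg}, $\chi T$ is uniformly mean ergodic if and only if $\mathds{1}_S$ is a pole of $\chi T$; using the rotation compatibility (the remark after \cref{def:pole-riesz} together with \cref{compatibility}\ref{compatibility3}), $\mathds{1}_S$ is a pole of $\chi T$ precisely when $\overline{\chi}\cdot\mathds{1}_S = \overline{\chi}$ is a pole of $T$. Since $\chi$ ranges over all of $\unitaryDual{S}$, which is a group, total uniform mean ergodicity is equivalent to every $\tau \in \unitaryDual{S}$ being a pole of $T$. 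Finally, \cref{isolated} shows that a pole either lies outside $\sigma_\unitary(T)$ or is isolated in it, so requiring every character to be a pole is the same as requiring every element of $\sigma_\unitary(T)$ to be a pole, which is exactly \ref{chartotallyuniform-poles}.

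The final and most substantial link is \ref{chartotallyuniform-poles}$\Leftrightarrow$\ref{chartotallyuniform-decomp} together with closing the loop to compactness, say via \ref{chartotallyuniform-decomp}$\Rightarrow$\ref{chartotallyuniform-constr} and \ref{chartotallyuniform-sg-infty}$\Rightarrow$\ref{chartotallyuniform-poles}. For \ref{chartotallyuniform-poles}$\Rightarrow$\ref{chartotallyuniform-decomp} I would use that $\sigma_\unitary(T)$ is a compact subset of $\unitaryDual{S}$ (\cref{homeomorph}) all of whose points are poles, hence isolated by \cref{isolated}; compactness then forces $\sigma_\unitary(T) = \{\chi_1,\dots,\chi_n\}$ to be finite. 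Each pole $\chi_k$ supplies a commuting projection $P_k$ onto $\ker(\chi_k - T)$, and one checks these projections are mutually orthogonal (their ranges are eigenspaces for distinct characters) so that $P \coloneqq \sum_k P_k$ is a projection; setting $E_\mathrm{r} \coloneqq \operatorname{rg}(P) = \bigoplus_k \ker(\chi_k - T)$ and $E_\mathrm{s} \coloneqq \ker(P)$ gives the decomposition. The key point is that $T|_{E_\mathrm{s}}$ has empty unitary spectrum, so by \cref{stability}\ref{stability-conv} the net $(T_s|_{E_\mathrm{s}})$ converges to $0$ in operator norm. For \ref{chartotallyuniform-decomp}$\Rightarrow$\ref{chartotallyuniform-constr} the orbit on $E_\mathrm{r}$ lives in a finite union of rotation orbits on finite-dimensional eigenspaces (hence relatively compact), while the orbit on $E_\mathrm{s}$ tends to $0$; combining these yields a relatively compact $C$ with $\operatorname{dist}(T_s, C) \to 0$. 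The hardest part will be this last equivalence: verifying that the spectral decomposition assembles correctly from the individual pole projections --- in particular that the $P_k$ commute and have independent ranges summing to a genuine complemented decomposition, and that the semisimple part has empty unitary spectrum so that \cref{stability} applies --- is where the real work lies, since everything else is either routine compactness manipulation or a direct appeal to Theorems~\ref{charunierg} and~\ref{stability}.
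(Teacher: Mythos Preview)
Your cycle among \ref{chartotallyuniform-sg-infty}--\ref{chartotallyuniform-constr}, the equivalence \ref{chartotallyuniform-erg}$\Leftrightarrow$\ref{chartotallyuniform-poles}, and the implications \ref{chartotallyuniform-poles}$\Rightarrow$\ref{chartotallyuniform-decomp} and \ref{chartotallyuniform-decomp}$\Rightarrow$\ref{chartotallyuniform-constr} are all essentially correct and match the paper's approach closely. But there is one real gap: you list ``\ref{chartotallyuniform-sg-infty}$\Rightarrow$\ref{chartotallyuniform-poles}'' as part of closing the loop yet give no argument for it, and your closing sentence implicitly files it under ``routine compactness manipulation or a direct appeal to Theorems~\ref{charunierg} and~\ref{stability}''. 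It is neither. This is precisely the one place where the compactness hypothesis must be converted into an ergodic or spectral conclusion, and nothing in the toolkit you have laid out performs that conversion.

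The paper handles this via \ref{chartotallyuniform-sg-infty}$\Rightarrow$\ref{chartotallyuniform-erg} and invokes a non-trivial structure theorem for the semigroup at infinity (\cite[Theorem~2.4]{DoGl2021}): when $\mathscr{T}_\infty$ is non-empty and compact, it contains a projection $P_\infty$ commuting with every $T_s$ such that $T_s|_{\ker P_\infty} \to 0$ in operator norm. Given this, any ergodic net $(C_j)$ satisfies $C_j(1-P_\infty) \to 0$ by \cref{cor:conv-mean-ergodic}, while $(C_j P_\infty)$ lives in the compact ideal $\mathscr{T}_\infty$ and hence has a convergent subnet; thus $(C_j)$ has a convergent subnet, which by Theorem~\ref{charunierg} gives uniform mean ergodicity. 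The same argument applies to each rotation $\chi T$ since condition~\ref{chartotallyuniform-subnet} is manifestly rotation-invariant. Without something playing the role of this $P_\infty$ --- essentially a minimal idempotent in the compact semigroup $\mathscr{T}_\infty$ --- you have no mechanism to pass from ``subnets of $(T_s)$ converge'' to ``ergodic nets converge'', and the two halves of your argument remain disconnected.
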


Assertion~\ref{chartotallyuniform-subnet} in the theorem can, alternatively, be formulated 
in terms of universal nets, see \cite[Proposition~2.8(iii)]{DoGl2021}; 
moreover, the assertion is equivalent to what is called \notion{asymptotic compactness} in \cite[Definition~3.2]{Huang95}.
If $S$ contains a majorizing sequence, assertion~\ref{chartotallyuniform-subnet} 
can also be formulated in terms of sequences, see \cite[Proposition~2.8(iv)]{DoGl2021}. 
In the case where $S$ is cancellative, the equivalence of~\ref{chartotallyuniform-subnet} 
and~\ref{chartotallyuniform-poles} in the theorem is a special case of \cite[Proposition~G]{Huang95}.

Assertion~\ref{chartotallyuniform-constr} in the theorem is inspired by the notion of 
\notion{constrictive} semigroups that can be used to study the long-term behaviour of operator semigroups 
in the strong topology. 
We refer to \cite[Section~1.3]{Em07} for a detailed treatment of constrictive semigroups. 
It is, however, worth pointing out explicitly that the fixed space of constrictive semigroups is always finite-dimensional 
(\cite[Theorem~1.3.3]{Em07}), while this need not be the case 
if property~\ref{chartotallyuniform-constr} in \cref{chartotallyuniform} is satisfied
(consider for instance the representation that maps every $s \in S$ to the identity operator 
on an infinite-dimensional space $E$).

\begin{proof}[Proof of \cref{chartotallyuniform}]
	\EquivalentProof{chartotallyuniform-sg-infty}{chartotallyuniform-subnet} 
	This equivalence is proved in \cite[Proposition~2.8(i) and~(ii)]{DoGl2021}.
	
	\ImpliesProof{chartotallyuniform-sg-infty}{chartotallyuniform-ideal} 
	It is straightforward to check that the set $\mathscr{T}_\infty$, 
	which is non-empty and compact by~\ref{chartotallyuniform-sg-infty}, is an ideal in $\overline{T(S)}$.
	
	\ImpliesProof{chartotallyuniform-ideal}{chartotallyuniform-constr}
	Assume that $\overline{T(S)}$ contains a compact ideal $I$ 
	and let $R \in I$. 
	Fix $\varepsilon > 0$ and set $M \coloneqq \sup_{s \in S} \lVert T_s \rVert < \infty$. 
	There exists an element $s_0 \in S$ such that $\lVert T_{s_0} - R \rVert \le \varepsilon$. 
	Hence, for every $r \in S$ one thus has $\lVert T_{s_0+r} - R T_r \rVert \le M \varepsilon$. 
	Since $I$ is an ideal, the operator $R T_r$ is in $I$, so it follows that $\operatorname{dist}(T_s, I) \le M \varepsilon$ 
	for every $s \ge s_0$.
	
	\ImpliesProof{chartotallyuniform-constr}{chartotallyuniform-subnet}
	Let $C \subseteq \mathscr{L}(E)$ be as in~\ref{chartotallyuniform-constr}.
	After replacing $C$ with its closure we may, and shall, assume that $C$ is closed in operator norm.
	Consider a subnet $(T_{s_j})_{j \in J}$ of $(T_s)_{s \in S}$. 
	For every $j \in J$ we can choose an element $C_j \in C$ such that $\lVert T_j - C_j \rVert \le 2 \operatorname{dist}(T_{s_j}, C)$; 
	this is clear if the distance of $T_{s_j}$ to $C$ is non-zero and if the distance is $0$, it follows from the closedness of $C$.
	The net $(C_j)_{j \in J}$ in the compact set $C$ has a convergent subnet $(C_{j_k})_{k \in K}$. 
	Since $\operatorname{dist}(T_{s_j}, C) \to 0$, we conclude that $\big(T_{s_{j_k}}\big)_{k \in K}$ also converges.
	
	\ImpliesProof{chartotallyuniform-sg-infty}{chartotallyuniform-erg}
	Assume that~\ref{chartotallyuniform-sg-infty} holds. 
	We first show that $T$ is uniformly mean ergodic.
	As shown in \cite[Theorem~2.4]{DoGl2021}, it follows from~\ref{chartotallyuniform-sg-infty} 
	that there exists a projection $P_\infty \in \mathscr{T}_\infty$ 
	that commutes with all the operators $T_s$ and that satisfies $\lim_s T_s|_{\ker P_\infty} = 0$ in operator norm. 
	Take an ergodic net $(C_j)_{j \in J}$ of $T$. 
	It follows from \cref{cor:conv-mean-ergodic} that $\lim_j C_j|_{\ker P_\infty} = 0$, 
	so $\lim_j C_j(1-P_\infty) = 0$. 
	On the other hand, the net $\big( C_j P_\infty \big)_{j \in J}$ 
	is contained in the compact ideal $\mathscr{T}_\infty$ of the semigroup $\overline{T(S)}$ 
	and thus has a convergent subnet. 
	Hence, $(C_j)_{j \in J}$ also has a convergent subnet. 
	Since this subnet is an ergodic net, too, it follows 
	from \cref{charunierg}\ref{charunierga} and~\ref{charuniergb} that $T$ is uniformly mean ergodic.
	
	Now let $\chi \in \unitaryDual{S}$. 
	By using the equivalence of~\ref{chartotallyuniform-sg-infty} and~\ref{chartotallyuniform-subnet} 
	one easily checks that $\chi T$ also satisfies~\ref{chartotallyuniform-sg-infty}. 
	Hence, as we have just shown, $\chi T$ is uniformly mean ergodic, too.
	
	\ImpliesProof{chartotallyuniform-erg}{chartotallyuniform-poles}
	This implication follows from \cref{compatibility}\ref{compatibility3} 
	and \cref{charunierg}\ref{charunierga} and~\ref{charuniergd}.
	
	\ImpliesProof{chartotallyuniform-poles}{chartotallyuniform-decomp}
	Assume that the unitary spectrum $\sigma_{\unitary}(T)$ consist of poles only. 
	By \cref{homeomorph}, $\sigma_{\unitary}(T)$ is compact, 
	and each pole is isolated in it due to \cref{isolated}. 
	Hence, $\sigma_{\unitary}(T)$ is finite. 
	We write $P_{\overline{\chi}} \in \overline{\mathrm{co}}(\overline{\chi} T(S)) \subseteq \mathrm{A}(T(S))$ 
	for the mean ergodic projection of $\overline{\chi}T$ for all $\chi \in \sigma_{\unitary}(T)$. 
	Then,
	\begin{align*}
		\mathrm{ran}(P_{\overline{\chi}}P_{\overline{\tau}}) 
		= 
		\mathrm{ran}(P_{\overline{\tau}}P_{\overline{\chi}}) 
		\subseteq 
		\ker(\chi - T) \cap \ker(\tau -T) = \{0\}
	\end{align*}
	for $\chi, \tau \in \sigma_{\unitary}(T)$ with $\chi \neq \tau$. 
	Thus, we obtain a projection $P \coloneqq \sum_{\chi \in \sigma_{\unitary}(T)} P_{\overline{\chi}}$ 
	onto $E_{\mathrm{r}} \coloneqq \bigoplus_{\chi \in \sigma_{\unitary}(T)} \ker(\chi -T)$ with $PT_s=T_sP$ for all $s \in S$. 
	Moreover, $\sigma_{\unitary}(T|_{\ker(P)}) \subseteq \sigma_{\unitary}(T|_{\ker(P_{\overline{\chi}})})$ 
	for every $\chi \in \sigma_{\unitary}(T)$ by \cref{compatibility}\ref{compatibility1} 
	and hence $\sigma_{\unitary}(T|_{\ker(P)}) = \emptyset$. 
	By \cref{stability} we obtain that $0 \in \overline{T|_{\ker(P)}(S)}$, 
	which proves~\ref{chartotallyuniform-decomp} with $E_{\mathrm{s}} \coloneqq \ker(P)$. 
	
	\ImpliesProof{chartotallyuniform-decomp}{chartotallyuniform-subnet}
	Consider a subnet $(T_{s_j})_{j \in J}$. 
	For each $k \in \{1, \dots, n\}$, the representation $T$ acts on $\ker(\chi_k - T)$ as the multiplication with $\chi_k$. 
	Since $\T^n$ is compact, we conclude that there exists 
	a subnet of $(T_{s_j})_{j \in J}$ that converges on $E_{\mathrm{r}}$. 
	Since the same subnet converges to $0$ on $E_{\mathrm{s}}$, assertion~\ref{chartotallyuniform-subnet} holds.
\end{proof}

The equivalent assertions in \cref{chartotallyuniform} are very useful in the following sense: 
if the semigroup at infinity $\mathscr{T}_\infty$ is non-empty and compact 
-- i.e., if the equivalent assertion~\ref{chartotallyuniform-sg-infty} is satisfied -- 
one can use the results from \cite[Sections~3 and~4]{DoGl2021} to derive sufficient conditions 
for the operator norm convergence of the net $(T_s)_{s \in S}$.

\subsection{Quasi-compact representations}\label{subsec:quasi-compact}

We now discuss quasi-compactness of semigroup representations, 
a concept that is closely related to total uniform mean ergodicity. 
We start with the following direct consequence of \cref{stability}. 
Here, for a Banach space $E$ we write $\mathscr{K}(E) \subseteq \mathscr{L}(E)$ for the closed ideal of compact operators on $E$.

\begin{proposition}\label{charquasicomp}
	Let $E$ a Banach space. 
	Let $T \colon S \rightarrow \mathscr{L}(E)$ be a bounded representation 
	and $T_{/\mathscr{K}(E)} \colon S \rightarrow \mathscr{L}(E)/\mathscr{K}(E)$ 
	the induced representation in $\mathscr{L}(E)/\mathscr{K}(E)$. 
	Then the following assertions are equivalent.
	\begin{enumerate}[(a)]
		\item 
		$\sigma_{\unitary}(T_{/\mathscr{K}(E)}) = \emptyset$.
		
		\item 
		There is $s \in S$ and $K \in \mathscr{K}(E)$ with $\|T_s - K \| < 1$.
	\end{enumerate}
\end{proposition}

\begin{definition}
	Let $E$ be a Banach space. 
	A bounded representation $T \colon S \rightarrow \mathscr{L}(E)$ is \notion{quasi-compact} 
	if it satisfies the equivalent conditions of \cref{charquasicomp}.
\end{definition}

Quasi-compact operators are discussed, e.g., in \cite[Section VIII.8]{DuSc1966} and \cite[Chapter~W]{DNP1987}, 
and quasi-compact strongly continuous one-parameter semigroups in \cite[Section V.3]{EN00}. 
Quasi-compact representation of abelian semigroups have already been studied in \cite[Subsection~2.4]{DoGl2021}.
We prove the following spectral characterization; 
see \cite[Theorem~W.10]{DNP1987} for a similar result for powers of a single operator.

\begin{theorem}\label{charquasi}
	Let $E$ be a Banach space. 
	For a bounded representation $T \colon S \rightarrow \mathscr{L}(E)$ the following assertions are equivalent. 
	\begin{enumerate}
		\item\label{charquasia} 
		$T$ is quasi-compact.
		
		\item\label{charquasib} 
		$\sigma_{\unitary}(T)$ consists of Riesz points.
	\end{enumerate}
\end{theorem}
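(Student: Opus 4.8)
The plan is to prove the two implications separately, leaning on the ultrapower characterization of Riesz points from \cref{thm:groh} and the structural dichotomy from \cref{chartotallyuniform}. Before starting I would reformulate quasi-compactness in the form I actually intend to use: applying \cref{stability} to the induced representation $T_{/\mathscr{K}(E)}$ in the Calkin algebra $\mathscr{L}(E)/\mathscr{K}(E)$ shows that $T$ is quasi-compact if and only if there exist $s_0 \in S$ and $K \in \mathscr{K}(E)$ with $\|T_{s_0}-K\|<1$.

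For the implication \ref{charquasia}$\Rightarrow$\ref{charquasib}, I would fix such $s_0$ and $K$, put $\delta := \|T_{s_0}-K\| < 1$, take an arbitrary $\chi \in \sigma_{\unitary}(T)$ and an arbitrary ultrafilter $p$ on a set $J \neq \emptyset$, and aim to show $\dim \ker(\chi - T^p) < \infty$; \cref{thm:groh} then upgrades this to \emph{$\chi$ is a Riesz point}. Two ingredients are needed. First, $K^p$ is again compact: since $R \mapsto R^p$ is isometric, $K^p$ sends the unit ball of $E^p$ into (an arbitrarily small enlargement of) the isometric copy of the relatively compact set $\overline{K(B)}$, $B$ being the unit ball of $E$ — this is the standard fact that ultrapowers of compact operators are compact, cf.\ \cite{Hein1980}. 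Second, on the eigenspace $V := \ker(\chi - T^p)$ the operator $(T^p)_{s_0} = (T_{s_0})^p$ acts as multiplication by the unimodular scalar $\chi(s_0)$. Using the isometry $\|(T_{s_0})^p - K^p\| = \delta$, every $v \in V$ then satisfies
\begin{align*}
	\|K^p v\| \ge \|(T_{s_0})^p v\| - \|((T_{s_0})^p - K^p)v\| \ge \bigl(|\chi(s_0)| - \delta\bigr)\|v\| = (1-\delta)\|v\|,
\end{align*}
so $K^p|_V$ is bounded below. A compact operator that is bounded below has finite-dimensional domain — the identity on $V$ factors as the composition of $K^p|_V$ with its bounded inverse and is hence compact — which gives $\dim V < \infty$.

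For the converse \ref{charquasib}$\Rightarrow$\ref{charquasia}, I would observe that Riesz points are in particular poles, so the implication \ref{chartotallyuniform-poles}$\Rightarrow$\ref{chartotallyuniform-decomp} of \cref{chartotallyuniform} furnishes a decomposition $E = E_{\mathrm{r}} \oplus E_{\mathrm{s}}$ into closed $T$-invariant subspaces with $E_{\mathrm{r}} = \bigoplus_{k=1}^n \ker(\chi_k - T)$ and $\|T_s|_{E_{\mathrm{s}}}\| \to 0$. Each summand $\ker(\chi_k - T)$ is a nonzero eigenspace, so $\chi_k \in \sigma_{\unitary}(T)$ is, by hypothesis, a Riesz point and hence $\ker(\chi_k - T)$ is finite-dimensional; thus $E_{\mathrm{r}}$ is finite-dimensional and the projection $P$ onto $E_{\mathrm{r}}$ along $E_{\mathrm{s}}$ is finite-rank, hence compact. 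Since $\mathrm{Id}_E - P$ maps onto the invariant subspace $E_{\mathrm{s}}$, I would pick $s_0$ with $\|T_{s_0}|_{E_{\mathrm{s}}}\|\,\|\mathrm{Id}_E - P\| < 1$ and conclude
\begin{align*}
	\|T_{s_0} - T_{s_0}P\| = \|T_{s_0}(\mathrm{Id}_E - P)\| \le \|T_{s_0}|_{E_{\mathrm{s}}}\|\,\|\mathrm{Id}_E - P\| < 1,
\end{align*}
with $T_{s_0}P$ compact, which is exactly the defining condition of quasi-compactness from \cref{charquasicomp}.

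The two displayed estimates are routine. The step I expect to require the most care is the compactness of $K^p$ together with the passage through \cref{thm:groh}: one must be sure that in the Banach-space setting ``compact plus bounded below'' genuinely forces finite dimension, and that $(T_{s_0})^p$ really acts as the scalar $\chi(s_0)$ on $\ker(\chi - T^p)$ (this is immediate from $(T^p)_{s_0} = (T_{s_0})^p$ and the definition of the eigenspace). The only subtlety in the converse is the factor $\|\mathrm{Id}_E - P\|$, which is harmless precisely because $\|T_s|_{E_{\mathrm{s}}}\|$ tends to $0$.
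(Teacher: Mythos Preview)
Your proof is correct and follows essentially the same route as the paper: both directions use \cref{thm:groh}, \cref{chartotallyuniform}, and the compactness of $K^p$ (the paper's \cref{compult}), with the only cosmetic difference that the paper packages the step ``$\dim\ker(\chi-T^p)<\infty$'' into a separate \cref{unimodeig} citing Dunford--Schwartz, whereas you supply the equivalent inline argument via ``compact and bounded below implies finite-dimensional domain''. One tiny wording quibble: in \ref{charquasib}$\Rightarrow$\ref{charquasia} the summands $\ker(\chi_k-T)$ furnished by \cref{chartotallyuniform}\ref{chartotallyuniform-decomp} are not asserted to be nonzero, but this is harmless since a zero summand is trivially finite-dimensional.
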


\cref{charquasi} and
\cref{chartotallyuniform}\ref{chartotallyuniform-erg} and~\ref{chartotallyuniform-poles} 
readily give the following consequence.

\begin{corollary}\label{qctu}
	Let $E$ be a Banach space.
	Every bounded and quasi-compact representation $T \colon S \rightarrow \mathscr{L}(E)$ 
	is totally uniformly mean ergodic with $\dim \ker(\chi-T) < \infty$ for all $\chi \in \sigma_{\unitary}(S)$.
\end{corollary}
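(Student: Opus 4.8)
The plan is to deduce the statement directly from the two results flagged just before it, namely \cref{charquasi} and the equivalence of conditions~\ref{chartotallyuniform-erg} and~\ref{chartotallyuniform-poles} in \cref{chartotallyuniform}. The entire substance of the argument has already been carried out in proving those theorems, so what remains is essentially a matter of chasing the relevant definitions.

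First I would invoke \cref{charquasi}: since $T$ is quasi-compact, its unitary spectrum $\sigma_{\unitary}(T)$ consists of Riesz points. By the very definition of a Riesz point (\cref{def:pole-riesz}), each $\chi \in \sigma_{\unitary}(T)$ is in particular a pole of $T$ and in addition satisfies $\dim \ker(\chi - T) < \infty$. The second half of the claimed conclusion -- finite-dimensionality of all the eigenspaces $\ker(\chi - T)$ for $\chi \in \sigma_{\unitary}(T)$ -- is therefore already settled.

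For the first half I would observe that the fact just extracted -- that every element of $\sigma_{\unitary}(T)$ is a pole of $T$ -- is precisely assertion~\ref{chartotallyuniform-poles} of \cref{chartotallyuniform}. Applying the equivalence of~\ref{chartotallyuniform-poles} with~\ref{chartotallyuniform-erg} in that theorem then yields that $T$ is totally uniformly mean ergodic, completing the proof.

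Since both ingredients are already available, there is no genuine obstacle here; the only point requiring a moment's care is to read off from \cref{def:pole-riesz} that a Riesz point is, by definition, a pole that additionally has finite-dimensional kernel, so that the single hypothesis of quasi-compactness feeds simultaneously into both halves of the conclusion.
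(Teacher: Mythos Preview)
Your proposal is correct and follows exactly the argument the paper intends: invoke \cref{charquasi} to conclude that $\sigma_{\unitary}(T)$ consists of Riesz points, read off finite-dimensionality of the eigenspaces from \cref{def:pole-riesz}, and then apply the equivalence \ref{chartotallyuniform-poles}$\Leftrightarrow$\ref{chartotallyuniform-erg} in \cref{chartotallyuniform} to obtain total uniform mean ergodicity. There is nothing to add.
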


For the proof of \cref{charquasi} we again need some auxiliary results.

\begin{lemma}\label{unimodeig}
	Let $E$ be a Banach space.
	For every quasi-compact bounded representation $T \colon S \rightarrow \mathscr{L}(E)$ 
	and $\chi \in \unitaryDual{S}$ we have $\dim \ker(\chi - T) < \infty$.
\end{lemma}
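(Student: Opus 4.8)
The plan is to show that the closed unit ball of $F \coloneqq \ker(\chi - T)$ is relatively compact, which forces $\dim F < \infty$ by Riesz's lemma. Note first that $F$ is closed, being an intersection of kernels of bounded operators, and hence is a Banach space in its own right.

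First I would invoke quasi-compactness via \cref{charquasicomp} to fix some $s_0 \in S$ and a compact operator $K \in \mathscr{K}(E)$ with $\norm{T_{s_0} - K} < 1$. The crucial observation is that on $F$ the operator $T_{s_0}$ acts as multiplication by the scalar $\chi(s_0)$, which lies on the unit circle $\T$ and is therefore invertible. Rotating by $\overline{\chi(s_0)}$, I set
\[
	K' \coloneqq \overline{\chi(s_0)}\, K \in \mathscr{K}(E), \qquad S_0 \coloneqq \overline{\chi(s_0)}\,(T_{s_0} - K) \in \mathscr{L}(E),
\]
so that $K' + S_0 = \overline{\chi(s_0)}\, T_{s_0}$ and $\norm{S_0} = \norm{T_{s_0} - K} < 1$. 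For every $x \in F$ one then has $\overline{\chi(s_0)}\, T_{s_0} x = x$, i.e.\ $x = K'x + S_0 x$.

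The key step is to rewrite this as $(\mathrm{Id}_E - S_0) x = K' x$ for all $x \in F$. Since $\norm{S_0} < 1$, the operator $\mathrm{Id}_E - S_0$ is invertible by a Neumann series argument, so $x = (\mathrm{Id}_E - S_0)^{-1} K' x$ for every $x \in F$. The operator $L \coloneqq (\mathrm{Id}_E - S_0)^{-1} K'$ is compact, being the composition of a bounded operator with a compact one. As $L$ restricts to the identity on $F$, the closed unit ball $B_F$ of $F$ satisfies $B_F \subseteq L(B_E)$, and the right-hand side is relatively compact. Hence $B_F$ is relatively compact, and since $F$ is complete this makes $\overline{B_F}$ compact; Riesz's lemma then yields $\dim F < \infty$.

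The argument is short, and the only point requiring care is the reduction carried out in the first two paragraphs: one must notice that restricting to the eigenspace turns the abstract quasi-compactness hypothesis (phrased for a single $T_{s_0}$) into a statement about the identity operator on $F$, after which the standard ``compact plus strict contraction'' trick applies verbatim. Everything else is routine.
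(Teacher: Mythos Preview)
Your proof is correct and follows essentially the same route as the paper: both arguments reduce to the single operator $T_{s_0}$ obtained from the definition of quasi-compactness and exploit that on $\ker(\chi - T)$ this operator acts as the unimodular scalar $\chi(s_0)$. The only difference is packaging: the paper invokes the classical single-operator fact that a power-bounded quasi-compact operator has finite-dimensional eigenspaces for every $\lambda \in \T$ (citing Dunford--Schwartz, Lemma~VIII.8.2) together with the inclusion $\ker(\chi - T) \subseteq \ker(\chi(s_0) - T_{s_0})$, whereas you unpack that lemma directly via the Neumann-series/compact-perturbation trick $(\mathrm{Id}_E - S_0)^{-1}K'$. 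Your version is self-contained; the paper's is shorter by deferring to the literature. One cosmetic point: the symbol $S_0$ for your auxiliary operator clashes with the ambient semigroup $S$; a different letter would read more cleanly.
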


\begin{proof}
	There is some $s \in S$ such that $T_s \in \mathscr{L}(E)$ is a quasi-compact operator. 
	Hence, $\dim \ker(\lambda - T) < \infty$ for every $\lambda \in \T$ 
	(see \cite[Lemma~VIII.8.2]{DuSc1966} or \cite[Lemma~W.7]{DNP1987}). 
	Since $\ker(\chi - T) \subseteq \ker(\chi(s) - T_s)$ for every $\chi \in \unitaryDual{S}$, we obtain the claim.
\end{proof}

\begin{lemma}\label{compult}
	Let $E$ a Banach space, $J \neq \emptyset$ a set and $p$ an ultrafilter on $J$. 
	If $K\in \mathscr{L}(E)$ is compact, then $K^{p} \in \mathscr{L}(E^p)$ is compact, too.
\end{lemma}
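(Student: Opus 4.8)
The plan is to prove compactness of $K^p$ directly from the definition, by showing that $K^p$ maps the closed unit ball $B_{E^p}$ of $E^p$ into a totally bounded subset of $E^p$; since $E^p$ is itself a Banach space, total boundedness of the image then yields relative compactness of $K^p(B_{E^p})$, which is precisely the statement that $K^p$ is compact. The point to keep in mind throughout is that on a general Banach space a compact operator need \emph{not} be a norm limit of finite-rank operators, so I deliberately avoid any finite-rank approximation of $K$ and work instead with the total boundedness of $K(B_E)$.

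First I would record a convenient choice of representatives. Given $\xi \in B_{E^p}$, write $\xi = (x_j)^p_{j \in J}$ with $(x_j)_{j \in J} \in \ell^\infty(J,E)$; since $\norm{\xi} = \lim_p \norm{x_j} \le 1$, the set $\{j \in J : \norm{x_j} \le 2\}$ lies in $p$, and replacing $x_j$ by $0$ off this set changes neither the class of $\xi$ nor the value of $K^p\xi$. Hence every element of $B_{E^p}$ admits a representative with $\sup_{j} \norm{x_j} \le 2$, and I would fix such a representative from now on.

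Next comes the covering argument. Fix $\varepsilon > 0$. Since $K$ is compact, the set $K(2 B_E)$ is totally bounded, so there are finitely many points $y_1, \dots, y_m \in E$ with $K(2B_E) \subseteq \bigcup_{i=1}^m \{y \in E : \norm{y - y_i} < \varepsilon\}$. For each $j \in J$ I would let $i(j)$ be the least index with $\norm{K x_j - y_{i(j)}} < \varepsilon$; the preimages $A_i \coloneqq \{j : i(j) = i\}$ then partition $J$ into finitely many pieces. Because $p$ is an ultrafilter, exactly one of them, say $A_{i_0}$, belongs to $p$, and on $A_{i_0}$ one has $\norm{Kx_j - y_{i_0}} < \varepsilon$. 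Consequently $\norm{K^p \xi - y_{i_0}^p} = \lim_p \norm{K x_j - y_{i_0}} \le \varepsilon$, where $y_{i_0}^p$ denotes the image of $y_{i_0}$ under the canonical embedding $E \to E^p$.

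Since the finite set $\{y_1^p, \dots, y_m^p\}$ depends only on $\varepsilon$ and $K$ and not on $\xi$, this shows $K^p(B_{E^p}) \subseteq \bigcup_{i=1}^m \{\eta \in E^p : \norm{\eta - y_i^p} \le \varepsilon\}$. As $\varepsilon > 0$ was arbitrary, $K^p(B_{E^p})$ is totally bounded and hence relatively compact in the Banach space $E^p$, so $K^p$ is compact. The only genuinely delicate step is the passage from the fibrewise estimate to the estimate for $K^p\xi$, which is exactly where the ultrafilter enters, selecting a single ``limit piece'' $A_{i_0}$; everything else is bookkeeping with the norm formula $\norm{(x_j)^p} = \lim_p \norm{x_j}$.
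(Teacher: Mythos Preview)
Your proof is correct. The covering argument combined with the ultrafilter's finite-partition property is clean, and your care in avoiding finite-rank approximation is well placed.

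The paper's proof reaches the same conclusion by a slightly more direct route. Rather than verifying total boundedness of $K^p(B_{E^p})$ via an $\varepsilon$-cover, it observes that for any representative $(x_j)_{j\in J}$ of an element of the open unit ball of $E^p$ with $\sup_j\norm{x_j}<1$, the net $(Kx_j)_{j\in J}$ lies in the compact set $\overline{K B_E}$, so the ultrafilter limit $y \coloneqq \lim_p Kx_j$ exists in $\overline{KB_E}$; then $(Kx_j)^p = y^p$ lies in the image of the canonical isometry $\iota\colon E\to E^p$. Thus $K^p$ maps the open unit ball of $E^p$ into the compact set $\iota(\overline{KB_E})$, and compactness of $K^p$ follows at once. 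In effect, the paper uses the existence of ultrafilter limits in compact sets as a packaged fact, whereas your covering argument essentially reproves that fact in context. The paper's version is shorter and yields the extra information that the range of $K^p$ on the unit ball sits inside the embedded copy of $E$; your version is more self-contained and makes the role of the ultrafilter completely explicit.
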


\begin{proof}
	We write $\iota \colon E \rightarrow E^p$ for the canonical embedding, 
	$B$ for the open unit ball in $E$, and $B_p$ for the open unit ball in $E^p$. 
	We claim that the compact operator $K$ satisfies
	\begin{align*}
		K^p B_p \subseteq \iota \bigl(\overline{KB}\bigr).
	\end{align*}
	This shows that $K^p B_p$ is relatively compact in $E^p$, and hence $K^p$ is compact.
	
	So take a family $(x_j)_{j \in J} \in \ell^{\infty}(J,E)$ with $(x_j)_{j \in J}^p \in B_p$. 
	By definition of the quotient norm on $E^p$ we may assume that $\|(x_j)_{j \in J}\| < 1$. 
	By compactness of $K$, the limit $y \coloneqq \lim_{j \to p} Kx_j$ exists in $\overline{KB}$. 
	Therefore, $K^{p}(x_j)_{j \in J}^p = y^p = \iota(y)$. 
	This shows the claimed inclusion.
\end{proof}

\begin{proof}[Proof of \cref{charquasi}.]
	\ImpliesProof{charquasib}{charquasia}
	Assume first that (b) holds. 
	Take the decomposition $E = E_{\mathrm{r}} \oplus E_{\mathrm{s}}$ 
	from \cref{chartotallyuniform}\ref{chartotallyuniform-decomp}. 
	Then $E_{\mathrm{r}}$ is finite dimensional. 
	Let $P \in \mathscr{L}(E)$ denote the projection onto $E_{\mathrm{r}}$ along $E_{\mathrm{s}}$. 
	Then the net $(T_s (1-P))_{s \in S}$ converges to $0$ in operator norm. 
	Since $P$ has finite rank the operator $T_sP$ compact for each $s$, 
	which proves~\ref{charquasia}.
	
	\ImpliesProof{charquasia}{charquasib}
	Assume that $T$ is quasi-compact 
	and consider a non-empty set $J$ and an ultrafilter $p$ on $J$. 
	By \cref{compult} the representation $T^p \colon S \rightarrow \mathscr{L}(E^p)$ is also quasi-compact 
	and hence $\dim \ker(\chi - T^p) < \infty$ for every $\chi \in \unitaryDual{S}$ by \cref{unimodeig}. 
	But then \cref{thm:groh} implies that $\sigma_{\unitary}(T)$ consists of Riesz points.
\end{proof}

\subsection{The Niiro--Sawashima Theorem for representations}\label{subsec:nisa}

In this last subsection we focus on representations as positive operators on Banach lattices. 
For the theory of Banach lattices and positive operators we refer for instance 
to the classical monographs \cite{Me91, Schaefer1970} and, for a more introductory treatment, to \cite{Za97}. 
A result of Lotz and Schaefer (see \cite[Theorem~2]{LoSc1968} or \cite[Theorem V.5.5]{Schaefer1970}), 
based on ealier work by Niiro and Sawashima \cite[Main Theorem and Theorem~9.2]{NiSa1966}, 
asserts the following for a positive operator $T \in \mathscr{L}(E)$ on a Banach lattice $E$ 
with spectral radius $1$ and finite-dimensional spectral space for the spectral value $1$s: 
if $1$ is a pole of the resolvent of $T$, then all spectral values of modulus $1$ are also poles.
This was used by Lin to prove the following \cite[Theorem~1]{Lin78}: 
if a positive operator $T$ on a Banach lattice is power-bounded 
-- or, more generally, satisfies $\norm{T^n}/n \to 0$ as $n \to \infty$, -- 
then $T$ is quasi-compact if and only if the Cesàro means of its powers converge in operator norm to a finite-rank projection.

In \cref{nisa} we prove an analogue of those results 
for a \notion{positive} bounded representation $T \colon S \rightarrow \mathscr{L}(E)$ 
on a Banach lattice $E$, where \notion{positive} means that $T_s \in \mathscr{L}(E)$ is a positive operator for every $s\in S$. 
The following proposition is a key ingredient towards the theorem.

Since we deal exclusively with complex Banach spaces throughout the article, 
all Banach lattices in the sequel are complex, too.

\begin{proposition}\label{dominationfixspace}
	Let $E$ be a Banach lattice and let $T \colon S \rightarrow \mathscr{L}(E)$ be a representation 
	which is bounded, positive, and uniformly mean ergodic, and satisfies $\dim \fix(T) < \infty$. 
	Then 
	\begin{align*}
		\dim \ker(\chi - T) \leq \dim \fix(T)
	\end{align*}
	for every $\chi \in \unitaryDual{S}$.
\end{proposition}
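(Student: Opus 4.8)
The plan is to exploit positivity through the mean ergodic projection. Since $T$ is uniformly mean ergodic, its mean ergodic projection $P \in \overline{\mathrm{co}}\,T(S)$ is a limit of convex combinations of the positive operators $T_s$, hence $P \geq 0$, and by \cref{lemmeanergproj} it projects onto $\fix(T)$. For any $x \in \ker(\chi - T)$ and any $s \in S$ one has $|x| = |\chi(s)x| = |T_s x| \leq T_s|x|$; applying convex combinations and passing to the limit gives $|x| \le P|x| \in \fix(T)$, so every eigenvector lies in the order ideal generated by $\fix(T)$. To bound $\dim\ker(\chi - T)$ it then suffices to bound the dimension of an arbitrary finite-dimensional subspace $V \subseteq \ker(\chi - T)$: fixing a basis $x_1,\dots,x_N$ of $V$ and setting $u \coloneqq P(|x_1|+\dots+|x_N|) \in \fix(T)$, I obtain a positive fixed vector with $|x_i| \le u$ for all $i$.

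I would then pass to the principal ideal $E_u$. With the order-unit norm $E_u$ is an $AM$-space with unit $u$, hence lattice-isometric by Kakutani's theorem to some $C(K)$ with $u$ corresponding to the constant function $\mathbbm 1$. Because $T_s u = u$ and $T_s \ge 0$, each $T_s$ leaves $E_u$ invariant and restricts there to a positive, unit-preserving contraction; so on $C(K)$ we have a bounded representation by Markov operators, still uniformly mean ergodic, with $\fix(T|_{E_u}) = \fix(T)\cap E_u$ of dimension $\le \dim\fix(T)$, and each $x_i$ becomes a function in $C(K)$. A Markov--Kakutani argument applied to the commuting adjoints on the probability measures of $K$ produces a $T$-invariant probability measure $\mu$, which I take of maximal support (possible since the positive invariant functionals span the finite-dimensional space of $T$-invariant functionals on $E_u$); this arranges that no nonzero positive fixed vector is $\mu$-null. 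Using $\langle \mu, |x_i|\rangle = \langle \mu, P|x_i|\rangle > 0$, the map $\ker(\chi - T)\cap E_u \to L^2(\mu)$ is injective.

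On $L^2(\mu)$ the operators $\hat T_s$ are commuting \emph{bistochastic} contractions (here $T_s\mathbbm 1 = \mathbbm 1$ and invariance of $\mu$ give $\hat T_s\mathbbm 1 = \hat T_s^{\,*}\mathbbm 1 = \mathbbm 1$). For a unimodular eigenfunction $\hat T_s f = \chi(s) f$ one then has $\hat T_s^{\,*} f = \overline{\chi(s)}\,f$ and, by the Kadison--Schwarz inequality, $|f|^2 = |\hat T_s f|^2 \le \hat T_s|f|^2$; integrating against the invariant $\mu$ forces equality, so $|f|^2$ is fixed and $f$ lies in the multiplicative domain of each $\hat T_s$. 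Choi's theorem then makes the $\hat T_s$ multiplicative on the subalgebra generated by the unimodular eigenfunctions, identifying the reversible part as the $L^2$ of a compact group rotation. In that picture the $\chi$-eigenspace is either empty or a coset-translate of the fixed space, so $\dim\ker(\chi - \hat T) \le \dim\fix(\hat T)$; combined with the injectivity above and the identification $\dim\fix(\hat T) = \dim\fix(T|_{E_u}) \le \dim\fix(T)$, this yields $\dim V \le \dim\fix(T)$.

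The main obstacle is the structural step on $L^2(\mu)$: proving that unimodular eigenfunctions have constant modulus and multiply correctly, so that the reversible part carries a genuine group-rotation structure. This is exactly the Niiro--Sawashima / de Leeuw--Glicksberg mechanism, and its delicate point here is that the $T_s$ are merely positive operators, not lattice or algebra homomorphisms; the passage through the multiplicative domain is what repairs this. A secondary care-point is the comparison $\dim\fix(\hat T) \le \dim\fix(T)$ between the $L^2$-fixed space and the original finite-dimensional fixed space, which must be argued via the mean ergodic projection (the easy inequality being the reverse one) rather than taken for granted.
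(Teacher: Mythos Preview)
Your approach is genuinely different from the paper's. You try to reduce to Markov operators on $C(K)$ via Kakutani's representation of the principal ideal $E_u$, and then to $L^2(\mu)$ for an invariant measure. The paper instead stays abstract: it first treats the case where each $T_s$ is a lattice homomorphism directly (then $x \in \ker(\chi - T)$ gives $T_s|x| = |T_s x| = |x|$, so $|x| \in \fix(T)$, and a short linear-algebra lemma finishes), and reduces the general positive case to this one by passing to the bidual and invoking a structural lemma (\cref{bina}). That lemma uses the minimal idempotent in the weak${}^*$-operator closure of $T'(S)$ to produce an invariant sublattice $F \subseteq E''$, containing all unimodular eigenspaces, on which $T''$ acts by lattice isomorphisms.

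The gap in your plan is the one you yourself flag as a ``secondary care-point'', and it is not secondary: the inequality $\dim\fix(\hat T) \le \dim\fix(T)$ is unproved and does not follow from what you have set up. On $L^2(\mu)$ the fixed space of $\hat T$ is indexed by the ergodic components of $\mu$, i.e., by linearly independent invariant probability measures on $K$. But invariant measures are functionals on $C(K)$ with respect to the order-unit norm, which is strictly stronger than the $E$-norm on $E_u$; they need not extend to elements of $E'$, so there is no a priori bound on their number coming from $\dim\fix(T') = \dim\fix(T)$. The same issue already appears one step earlier: the completion $C(K)$ of $(E_u, \lVert\cdot\rVert_u)$ can acquire fixed vectors that do not lie in $E$, so neither ``$\fix(T|_{C(K)})$ is finite-dimensional'' nor ``the invariant functionals on $E_u$ span a finite-dimensional space'' is justified. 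Arguing ``via the mean ergodic projection'' does not close this: although $P$ extends to a positive projection $\tilde P$ on $C(K)$ (since $Pu = u$ and $P \ge 0$), you only get $\mathrm{ran}(\tilde P) \subseteq \fix(T|_{C(K)})$, not the reverse inclusion, because the convergence of ergodic nets to $P$ takes place in $\mathscr{L}(E)$, not in $\mathscr{L}(C(K))$.

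The paper's route sidesteps exactly this difficulty: the sublattice $F$ produced by \cref{bina} sits inside $E''$ with an equivalent norm, so $\fix(T''|_F) \subseteq \fix(T'')$ automatically, and uniform mean ergodicity transfers to $T'$ and $T''$ because $R \mapsto R'$ is an isometry. Your Kadison--Schwarz and multiplicative-domain idea is morally the same mechanism that makes \cref{bina} work --- both manufacture lattice-homomorphism behaviour out of mere positivity --- but carrying it out on $C(K)$ and $L^2(\mu)$ forces you through norm comparisons that the abstract enveloping-semigroup argument avoids.
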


The proof of the proposition uses the following two lemmas.
The first one, \cref{dom}, is a classical auxiliary result in Perron--Frobenius theory 
and its proof can be found in \cite[Lemma~C-III-3.11]{Nage1986}.

\begin{lemma}\label{dom}
	Let $E$ be a Banach lattice and $M$ and $L$ be two vector subspaces of $E$. 
	Assume that $f \in M$ implies $\vert f\vert \in L$. 
	Then $\dim M \leq \dim L$.
\end{lemma}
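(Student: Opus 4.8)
The plan is to apply \cref{dom} with $M \coloneqq \ker(\chi - T)$ and $L \coloneqq \fix(T)$. Its hypothesis is precisely the implication ``$f \in M \Rightarrow |f| \in L$'', and its conclusion $\dim M \le \dim L$ is exactly the asserted inequality. So the whole proposition reduces to the single lattice-theoretic claim that
\[
	f \in \ker(\chi - T) \quad\Longrightarrow\quad |f| \in \fix(T).
\]
From now on I therefore fix $f \in \ker(\chi - T)$, that is, $T_s f = \chi(s) f$ for all $s \in S$, and I aim to show $|f| \in \fix(T)$; the uniform mean ergodicity and the finite-dimensionality of $\fix(T)$ are the ingredients I plan to feed into this one claim.

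First I would set up a sandwich for $|f|$. Since each $T_s$ is positive, the modulus inequality $|T_s f| \le T_s |f|$ holds, and because $|\chi(s)| = 1$ we have $|T_s f| = |\chi(s) f| = |f|$; hence $|f| \le T_s |f|$ for every $s \in S$. Let $P$ be the mean ergodic projection, which exists by uniform mean ergodicity. As an operator norm limit of elements of $\overline{\mathrm{co}}\,T(S)$ (an ergodic net converging to $P$ by \cref{charunierg}), $P$ is positive, and by \cref{lemmeanergproj} it is a projection onto $\fix(T)$. Every $C \in \overline{\mathrm{co}}\,T(S)$ satisfies $C|f| \ge |f|$, being a limit of convex combinations of the operators $T_s$, each of which dominates $|f|$; passing to the limit gives $P|f| \ge |f|$ with $P|f| \in \fix(T)$. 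Applying the positive operator $T_s$ to $|f| \le P|f|$ and using $T_s P|f| = P|f|$ yields the sandwich
\[
	|f| \;\le\; T_s|f| \;\le\; P|f| \;\in\; \fix(T) \qquad (s \in S).
\]

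The main obstacle is then the equality step: upgrading $T_s|f| \ge |f|$ to $T_s|f| = |f|$, equivalently $|f| = P|f|$. Writing $h \coloneqq P|f| \in \fix(T)$, I would localize to the principal ideal $E_h \coloneqq \{x \in E : |x| \le c\,h \text{ for some } c \ge 0\}$, which is invariant under every $T_s$ because $|T_s x| \le T_s|x| \le c\,T_s h = c\,h$. Its operator norm closure $\overline{E_h}$ is an AM-space with order unit $h$, hence lattice-isometric (Kakutani) to a space $C(K)$ in which $h$ is the constant function and each $T_s$ acts as a positive unit-preserving (Markov) operator; moreover $f \in \overline{E_h}$ since $|f| \le h$. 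In this picture the rigid phase identity $|T_s f| = |f|$ survives alongside $T_s|f| \ge |f|$, and this is exactly the configuration to which the classical Perron--Frobenius modulus/signum argument applies: the finite-dimensionality of $\fix(T) \supseteq \fix(T|_{\overline{E_h}})$ confines the dynamics to finitely many ergodic pieces, and on these the phase rigidity forces no cancellation, i.e.\ $T_s|f| = |f|$. I expect this to be the genuinely hard part, and the delicate point is that one cannot finish with invariant functionals alone — these annihilate $\ker P \ni h - |f|$ — so the argument must exploit the equality $|T_s f| = |f|$ (not merely the inequality) together with the order-unit structure around the fixed vector $h$. Once $T_s|f| = |f|$ is established for all $s$, we obtain $|f| \in \fix(T)$, and \cref{dom} delivers $\dim \ker(\chi - T) \le \dim \fix(T)$.
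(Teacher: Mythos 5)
Your proposal does not prove the statement under review. The statement is \cref{dom} itself --- the purely lattice-theoretic claim that $\dim M \le \dim L$ whenever every $f \in M$ satisfies $\lvert f\rvert \in L$ --- but your text takes exactly this claim as a black box (``apply \cref{dom} with $M \coloneqq \ker(\chi - T)$ and $L \coloneqq \fix(T)$'') and instead sketches a proof of \cref{dominationfixspace}. Nothing in the proposal addresses why the modulus hypothesis forces the dimension inequality, which is the content you were asked to supply. (For comparison: the paper does not give an argument either, but deliberately so --- it cites \cite[Lemma~C-III-3.11]{Nage1986}, where the proof is carried out.)

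Moreover, even read as a proof of \cref{dominationfixspace}, your argument has a fatal gap: the claim you reduce everything to, namely that $f \in \ker(\chi - T)$ implies $\lvert f \rvert \in \fix(T)$, is false for general positive representations. Take $S = \N$, $E = \C^3$, and $T_n \coloneqq R^n$ where $R(x,y,z) \coloneqq \bigl(y,\, x,\, \tfrac{x+y}{2}\bigr)$. Then $R$ is positive and power-bounded (indeed $R^3 = R$), the representation is uniformly mean ergodic with $\fix(T)$ spanned by $(1,1,1)$, and for the character $\chi(n) = (-1)^n$ the vector $f = (1,-1,0)$ lies in $\ker(\chi - T)$; yet $\lvert f\rvert = (1,1,0)$ is not fixed, since $R\lvert f\rvert = (1,1,1)$. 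Here the mean ergodic projection is $P(x,y,z) = \bigl(\tfrac{x+y}{2},\tfrac{x+y}{2},\tfrac{x+y}{2}\bigr)$, so $\lvert f\rvert \le P\lvert f\rvert$ holds with \emph{strict} inequality --- the ``equality step'' you flag as the hard part cannot be carried out, no matter how the Perron--Frobenius machinery is deployed, because the statement it is supposed to yield is simply wrong in $E$. This is precisely why the paper's proof of \cref{dominationfixspace} never argues in $E$ for general positive $T$: it first settles the case where all $T_s$ are lattice homomorphisms (there $\lvert T_s f\rvert = T_s\lvert f\rvert$ makes the implication immediate), and then reduces the general case to that one by passing to the bidual and invoking \cref{bina}, which produces an invariant subspace $F \subseteq E''$ --- carrying a possibly different lattice structure --- on which the representation acts by lattice isomorphisms; \cref{dom} is then applied inside the Banach lattice $F$, not inside $E$.
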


The second result, \cref{bina}, follows -- in case of a single operator -- 
from a decomposition theorem due to Bihlmaier (see \cite[Theorem~3.1]{Bihl2023}). 
We use similar methods to prove a general version for semigroups here.

\begin{lemma}\label{bina}
	Let $E$ be a Banach lattice 
	and let $T \colon S \rightarrow \mathscr{L}(E)$ be a positive and bounded representation. 
	Then there is an invariant closed linear subspace $F \subseteq E'$ 
	for the dual representation $T' \colon S \rightarrow \mathscr{L}(E')$ 
	and a norm $\|\cdot\|_F$ on $F$ equivalent to the norm induced by $E'$ such that
	\begin{enumerate}[(i)]
		\item \label{bina1}
		$F$ is a Banach lattice with the order inherited from $E$ and the norm $\|\cdot\|_F$,
		
		\item \label{bina2}
		$\ker(\chi - T') \subseteq F$ for every $\chi \in \unitaryDual{S}$, and
		
		\item \label{bina3}
		$T'|_F$ is a bounded representation of lattice isomorphisms.
	\end{enumerate}
\end{lemma}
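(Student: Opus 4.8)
The plan is to work entirely on the dual side and to isolate the \emph{reversible part} of the dual representation. First I would record the structural facts about $T'$: since each $T_s$ is positive, so is each adjoint $T_s' = (T_s)'$, and hence $T' \colon S \to \mathscr{L}(E')$ is again a bounded positive representation. Two classical facts about duals of Banach lattices will be used repeatedly: $E'$ is Dedekind complete, and every adjoint operator on $E'$ is order continuous (because the elements of $E$ act as order continuous functionals on $E'$). As a warm-up that fixes the geometry, I would check that the modulus of every dual peripheral eigenvector is dominated by a positive fixed vector: if $T_s' x' = \chi(s) x'$ then $T_s'|x'| \ge |T_s'x'| = |x'|$, so the net $(T_s'|x'|)_{s\in S}$ is increasing and norm bounded; a weak$^*$ cluster point is an upper bound, so by Dedekind completeness $v' := \sup_s T_s'|x'|$ exists, and order continuity of the $T_t'$ shows $v' \in \fix(T')$ with $|x'| \le v'$. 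This already tells us that all the spaces $\ker(\chi - T')$ live in the recurrent part of the dynamics.

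Next I would construct $F$ as the de Leeuw--Glicksberg reversible part, which is available here \emph{without} any compactness hypothesis on the semigroup at infinity because we may work in the weak$^*$ operator topology. Since $\sup_s\|T_s'\| =: M < \infty$, the orbit $\{T_s' : s \in S\}$ lies in a product of weak$^*$ compact balls and is thus relatively compact in the topology of pointwise weak$^*$ convergence. Its closure $\mathcal{S}$ is a compact abelian (right-topological) semigroup; its smallest ideal is a group $\mathcal{G}$ whose identity is an idempotent $Q \in \mathscr{L}(E')$. As a weak$^*$ limit of the positive operators $T_s'$, the projection $Q$ is positive and satisfies $\|Q\| \le M$; every $R \in \mathcal{G}$ is positive, commutes with $Q$, and has a positive inverse $R^{-1} \in \mathcal{G}$ with $RR^{-1} = Q$. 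I would then set $F := \operatorname{rg} Q$, a closed subspace, and note that each $T_s'|_F$ agrees with the group element $T_s' Q \in \mathcal{G}$, so it is a bijection of $F$ whose inverse is the restriction of a positive operator. For assertion (ii), the eigenvectors are fixed by $Q$: the set $\{\chi(s)\}_s$ is a subsemigroup of the compact group $\T$, so $1$ lies in its closure, and choosing the net defining $Q$ compatibly gives $Q x' = x'$ for $x' \in \ker(\chi - T')$; hence $\ker(\chi - T') \subseteq F$. For the norm I would put $\|x'\|_F := \sup_{R \in \mathcal{G}}\|R x'\|$; weak$^*$ lower semicontinuity of the norm together with $\|R\|\le M$ gives $\|x'\| \le \|x'\|_F \le M\|x'\|$ (the lower bound from $Q \in \mathcal{G}$), so $\|\cdot\|_F$ is equivalent to the induced norm and $(F,\|\cdot\|_F)$ is complete, while reindexing the group shows every $T_s'|_F$ is a $\|\cdot\|_F$-isometry.

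The main obstacle, and the step where positivity of $T$ is really used (and where I would follow Bihlmaier's single-operator argument, \cite[Theorem~3.1]{Bihl2023}), is to show that $F$ is a genuine \emph{sublattice} of $E'$ with the inherited order, i.e.\ that $Q$ is compatible with the lattice operations on its range, and consequently that each group element restricts to a lattice \emph{isomorphism} rather than merely a bipositive linear bijection. The soft part of this is the general principle that a linear bijection of a vector lattice that is positive with positive inverse is automatically a lattice isomorphism; once $F$ is known to be a sublattice, applying this to $R|_F$ and $R^{-1}|_F$ for $R \in \mathcal{G}$ yields (iii), makes $\|\cdot\|_F$ a lattice norm, and upgrades (i) to the full Banach lattice statement. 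The hard part is the sublattice property itself: the easy inequality $Q|u'| \ge |u'|$ for $u' \in F$ follows from positivity, but the reverse inequality $Q|u'| \le |u'|$ is exactly the Perron--Frobenius phenomenon and requires exploiting the group structure of $\mathcal{G}$ together with the order continuity of the adjoints and the domination of moduli by fixed vectors established in the first step. I expect this verification to be the technical heart of the proof, with everything else amounting to bookkeeping about the reversible part.
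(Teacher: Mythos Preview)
Your overall architecture---compactify $T'(S)$ in the weak$^*$ operator topology, pick a minimal idempotent $Q$, set $F = \operatorname{rg} Q$---matches the paper exactly. The divergence is in what you flag as ``the technical heart of the proof'': you believe you must show that $F$ is a \emph{sublattice} of $E'$ (equivalently $Q|u'| = |u'|$ for $u' \in F$), and you identify the inequality $Q|u'| \le |u'|$ as the hard direction. The paper does not prove this, and the lemma does not require it. The statement asks only that $F$ be a Banach lattice \emph{with the order inherited from $E'$}, which is strictly weaker: the lattice operations on $F$ may differ from those of the ambient space. This is precisely the content of Schaefer's theorem on ranges of positive projections \cite[Proposition~III.11.5]{Schaefer1970}: the range of a positive projection on a Banach lattice is automatically a vector lattice in the induced order (with modulus $|u'|_F = Q|u'|_{E'}$), and carries an equivalent Banach lattice norm. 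With this in hand, (i) is immediate, and for (iii) one only needs the soft principle you already stated---a positive bijection with positive inverse is a lattice isomorphism---applied to the lattice $F$ with \emph{its own} operations. The ``hard part'' you anticipate therefore does not arise, and the paper's proof is correspondingly short.

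Two smaller points. First, the closure $\mathcal{S}$ is only right topological; there is no reason for it to be abelian, and the paper does not claim this. What one does get (and what the paper uses) is that each $T_s'$, being an adjoint, lies in the topological centre, so every $R \in \mathcal{S}$ commutes with every $T_s'$---enough to make $F$ invariant and to run the argument for (iii). Second, your explicit norm $\|x'\|_F = \sup_{R \in \mathcal{G}} \|R x'\|$ would be a lattice norm \emph{if} $F$ were a sublattice, but absent that it is not clear it is monotone for the $F$-modulus $Q|\cdot|$; invoking Schaefer sidesteps this issue as well.
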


\begin{proof}
	Let $\mathcal{S} \subseteq \mathscr{L}(E')$ be the closure of $T'(S)$ with respect to the weak* operator topology, 
	i.e., the locally convex topology generated by the seminorms
	\begin{align*}
		p_{x',x} \colon \mathscr{L}(E') \rightarrow \R_{\geq 0}, \quad \langle Rx', x\rangle
	\end{align*}
	for $x ' \in E'$ and $x \in E$. 
	Equipped with the weak* operator topology, $\mathcal{S}$ is compact (by the Banach--Alaoglu theorem) 
	and a semigroup with respect to composition of operators which is right topological, 
	i.e., the multiplication from the right
	\begin{align*}
		\mathcal{S} \rightarrow \mathcal{S}, \quad Q \mapsto QR
	\end{align*}
	is continuous for each $R \in \mathcal{S}$. 
	Moreover, all elements of $\mathcal{S}$ commute with each $T_s'$ for $s \in S$. 
	All those properties are straightforward to check;
	we refer for instance to \cite{Witz1964}, \cite{Koeh1994}, \cite{Koeh1995}, \cite{Roma2011}, 
	\cite{Roma2016}, \cite{Krei2018}, and \cite{Bihl2023} 
	for more details as well as for applications of this construction. 
	Note further that in our situation $\mathcal{S}$ consists of positive operators on $E'$.
	
	By the general structure theory of compact right topological semigroups (see, e.g., \cite[Theorem 3.11]{BeJuMi1989}), 
	$\mathcal{S}$ has a minimal idempotent $P \in \mathcal{S}$, 
	i.e., $P^2 = P$ and $P\mathcal{S}P$ is a group with neutral element $P$. 
	In particular, $P$ is a projection commuting with every $T_s'$ for $s \in S$, 
	and thus its range $F \coloneqq \mathrm{ran}(P)$ is invariant with respect to $T'$. 
	Let us show that $F$ satisfies the properties~\ref{bina1}--\ref{bina3}.
	
	\ref{bina1}
	Since $P$ is a positive projection, its range $F$ is a vector lattice 
	with respect to the order inherited from $E$ (but not necessarily with the same lattice operations) 
	and there exists an equivalent norm on $F$ that turns $F$ into a Banach lattice; 
	see \cite[Proposition III.11.5]{Schaefer1970}.
	
	\ref{bina2}
	Let $\chi \in \unitaryDual{S}$ and $x' \in \ker(\chi-T')$. 
	Since $P$ is in the closure of $T'(S)$ with respect to the weak* operator topology, 
	$Px' = \lambda x'$ for some $\lambda \in \T$. 
	But -- as $P$ is a projection -- we have $\lambda = 1$, hence $x' \in \mathrm{ran}(P) = F$. 
	
	\ref{bina3}
	Let $s \in S$. 
	The operator $T_s'P = PT_s'P$ has an inverse $R$ in the group $P\mathcal{S}P$. 
	The operator $R$ maps $F = \mathrm{ran}(P)$ into itself and satisfies $R T_s'P =T_s'RP = P$. 
	Thus, the restriction $R|_F$ is an inverse for the restriction $T_s|_{F}$. 
	Since $R$ is positive, $T_s|_{F}$ is a lattice isomorphism. 
	This implies \ref{bina3}.
\end{proof}

\begin{proof}[Proof of \cref{dominationfixspace}.]
	We first assume that $T_s \in \mathscr{L}(E)$ is a lattice homomorphism for every $s \in S$. 
	So take $\chi \in \unitaryDual{S}$ and $x \in \ker(\chi - T)$. 
	Then,
	\begin{align*}
		T_s|x| = |T_sx| = |\chi(s)x| = |x|
	\end{align*}
	for every $s \in S$, i.e., $|x| \in \fix(T)$. 
	Thus, \cref{dom} yields the claim.
	
	Now consider the general case. 
	Since $\mathscr{L}(E) \rightarrow \mathscr{L}(E'), \, R \mapsto R'$ is an isometric unital algebra anti-homomorphism 
	(i.e., it interchanges the order of multiplication)
	und maps $\overline{\mathrm{co}}\, T(S)$ to $\overline{\mathrm{co}}\, T'(S)$, 
	we obtain that the dual representation $T' \colon S \rightarrow \mathscr{L}(E')$, 
	and then also the bidual representation $T'' \colon S \rightarrow \mathscr{L}(E'')$, is uniformly mean ergodic. 
	In particular, by \cite[Theorem~8.36]{EFHN2015}, we obtain that $\fix(T)$ separates $\fix(T')$, 
	and that $\fix(T')$ separates $\fix(T'')$. 
	This implies $\dim \fix(T) = \dim \fix(T'')$.
	In view of the canonical embedding $j \colon E \rightarrow E''$ of $E$ into its bidual $E''$, 
	we only need to show that $\dim \ker(\chi - T'') \leq \dim \fix(T'')$. 
	By using \cref{bina} one sees that this estimate follows from the special case 
	for representations consisting of lattice homomorphisms that we considered at the beginning of the proof.
\end{proof}

\cref{dominationfixspace} allows us to deduce the following version of the Niiro-Sawashima theorem.

\begin{theorem}\label{nisa}
	Let $E$ be a Banach lattice. 
	For a positive bounded representation $T \colon S \rightarrow \mathscr{L}(E)$ 
	the following assertions are equivalent.
	\begin{enumerate}[(a)]
		\item\label{nisa1a} 
		$T$ is quasi-compact.
		
		\item\label{nisa1b} 
		$T$ is uniformly mean ergodic with $\dim \fix(T) < \infty$.
		
		\item\label{nisa1c} 
		The semigroup character $\mathds{1}_S \in \unitaryDual{S}$ is a Riesz point of $T$.
	\end{enumerate}
\end{theorem}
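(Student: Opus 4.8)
The plan is to establish the cycle \ref{nisa1a} $\Rightarrow$ \ref{nisa1b} $\Rightarrow$ \ref{nisa1c} $\Rightarrow$ \ref{nisa1a}, where the first two implications are soft consequences of the general theory developed above and the last one carries the genuine Niiro--Sawashima content (and is the only place where positivity enters).

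For \ref{nisa1a} $\Rightarrow$ \ref{nisa1b} I would simply invoke \cref{qctu}: a quasi-compact representation is totally uniformly mean ergodic, in particular uniformly mean ergodic, and satisfies $\dim \ker(\chi - T) < \infty$ for all $\chi \in \unitaryDual{S}$; taking $\chi = \mathds{1}_S$ gives $\dim \fix(T) < \infty$. For \ref{nisa1b} $\Rightarrow$ \ref{nisa1c}, uniform mean ergodicity makes $\mathds{1}_S$ a pole of $T$ by \cref{charunierg}\ref{charuniergd}, and since in addition $\dim \ker(\mathds{1}_S - T) = \dim \fix(T) < \infty$, this pole is a Riesz point directly by \cref{def:pole-riesz}.

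The substantial implication is \ref{nisa1c} $\Rightarrow$ \ref{nisa1a}. By \cref{charquasi} it suffices to show that every $\chi \in \sigma_{\unitary}(T)$ is a Riesz point, and by \cref{thm:groh} this follows once I prove $\dim \ker(\chi - T^p) < \infty$ for every $\chi \in \unitaryDual{S}$ and every ultrafilter $p$ on every non-empty set $J$. The idea is to transport the hypothesis to the ultrapower and apply the domination estimate \cref{dominationfixspace} there. So fix $p$ and $J$. Since $\mathds{1}_S$ is a Riesz point, it is in particular a pole with a projection $P$ onto $\fix(T)$ that commutes with all $T_s$ and satisfies $\mathds{1}_S \notin \sigma_{\unitary}(T|_{\ker P})$. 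I would then check that $T^p$ inherits this structure: as $R \mapsto R^p$ is an isometric unital algebra homomorphism, $P^p$ is a projection commuting with every $(T^p)_s$ with $\ker(P^p) = (\ker P)^p$ and $T^p|_{\ker(P^p)} = (T|_{\ker P})^p$; and applying this homomorphism to an identity $\sum_{k} a_k (1 - T_{s_k}|_{\ker P}) = \mathrm{Id}$ furnished by \cref{prop:charspec}\ref{prop:charspec:itm:ideal} yields $\mathds{1}_S \notin \sigma_{\unitary}(T^p|_{\ker(P^p)})$. Hence $\mathds{1}_S$ is a pole of $T^p$, so $T^p$ is uniformly mean ergodic by \cref{charunierg}, and $\dim \fix(T^p) = \dim (\fix T)^p = \dim \fix(T) < \infty$ by \cref{fixedspaces}.

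At this point $T^p$ is a positive (the ultrapower of a Banach lattice is a Banach lattice and $(T_s)^p \ge 0$), bounded, uniformly mean ergodic representation on $E^p$ with finite-dimensional fixed space, so \cref{dominationfixspace} applies verbatim and gives $\dim \ker(\chi - T^p) \le \dim \fix(T^p) < \infty$ for every $\chi \in \unitaryDual{S}$. As $p$ and $J$ were arbitrary, \cref{thm:groh} shows that each such $\chi$ is a Riesz point of $T$, and \cref{charquasi} then yields quasi-compactness. I expect the main obstacle to be the careful bookkeeping that the pole structure — the commuting projection together with the emptiness of the unitary spectrum on its kernel — passes to the ultrapower; once that is settled, positivity is used only through the already-proved \cref{dominationfixspace}, and everything else is assembly of the earlier results.
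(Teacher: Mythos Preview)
Your proof is correct and follows essentially the same route as the paper: the two easy implications are handled identically, and for \ref{nisa1c}$\Rightarrow$\ref{nisa1a} you both pass to the ultrapower, verify that $T^p$ is positive, bounded, uniformly mean ergodic with finite-dimensional fixed space, apply \cref{dominationfixspace} on $E^p$, and conclude via \cref{thm:groh} and \cref{charquasi}. The only cosmetic difference is that the paper obtains $\dim\fix(T^p)<\infty$ directly from the forward direction of \cref{thm:groh} and derives uniform mean ergodicity of $T^p$ by observing that the isometric unital algebra homomorphism $R\mapsto R^p$ sends the zero element of $\overline{\mathrm{co}}\,T(S)$ to a zero element of $\overline{\mathrm{co}}\,T^p(S)$, whereas you transport the pole projection by hand---either bookkeeping works.
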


\begin{proof}
	\ImpliesProof{nisa1a}{nisa1b} 
	This implication follows from \cref{qctu}. 
	
	\ImpliesProof{nisa1b}{nisa1c}
	This implication follows from \cref{charunierg}\ref{charunierga} and~\ref{charuniergd}. 
	
	\ImpliesProof{nisa1c}{nisa1a}
	Assume that $\mathds{1}_S$ is a Riesz point of $T$. 
	Let $J$ be a non-empty set and $p$ and ultrafilter on $J$.  
	Then the fixed space $\mathrm{fix}(T^p)$ is finite dimensional by \cref{thm:groh}.  
	Moreover, since $\mathscr{L}(E) \to \mathscr{L}(E^p),~T\mapsto T^p$ is an isometric unitial algebra homomorphism 
	mapping $\overline{\mathrm{co}}\, T(S)$ to $\overline{\mathrm{co}}\, T^p(S)$, 
	the representation $T^p$ is uniformly mean ergodic. 
	Therefore, \cref{dominationfixspace} implies that $\dim \ker(\chi - T^p) < \infty$ for every $\chi \in \unitaryDual{S}$. 
	Using again \cref{thm:groh}, we conclude that every $\chi \in \sigma_{\unitary}(T)$ is a Riesz point of $T$. 
	By \cref{charquasi} this implies that the representation $T$ is quasi-compact. 
\end{proof}

\begin{remark}\label{rem:rotation-sg}
	Consider the special case $S = \big([0,\infty),+\big)$ and let $T \colon [0,\infty) \to \mathscr{L}(E)$ 
	be a bounded and strongly continuous representation on a Banach space $E$. 
	In other words, $T$ is a bounded $C_0$-semigroup. 
	In $C_0$-semigroup theory, \notion{uniform mean ergodicity} of $T$ usually means that the Cesàro means 
	\begin{align*}
		C_t \coloneqq \frac{1}{t} \int_0^t T_s \, \mathrm{d}s,
	\end{align*}
	where the integral is defined in the strong sense, converge with respect to the operator norm as $t \to \infty$. 
	This property is satisfied if and only if the range of the generator $A$ of $T$ is closed 
	if and only if the number $0$ is a pole of order $\le 1$
	of the resolvent of $A$ \cite[Theorem~V.4.10]{EN00}. 
	
	Despite the apparent similarity of these equivalences with the equivalences in Theorem~\ref{charunierg}, 
	one has to beware that the $C_0$-semigroup notion of uniform mean ergodicity is not equivalent 
	to the notion of uniform mean ergodicity from Definition~\ref{def:unif-mean-ergodic}. 
	The reason is that, in Definition~\ref{def:unif-mean-ergodic}, the mean ergodic projection $P$ 
	is assumed to be from the operator norm closure of the convex hull of $T(S)$. 
	However, while the Cesàro means $C_t$ converge in operator norm to a projection, 
	the $C_t$ themselves are only from the closure of $T(S)$ in the strong operator topology, in general, 
	since they are given by integrals that are defined strongly (i.e., pointwise). 
	
	As a concrete example, let $E = L^p(\T)$ for any $p \in [1,\infty)$ and let $T$ be the rotation semigroup in $E$, 
	i.e., $T_s f = f(\mathrm{e}^{\mathrm{i}s}\,\cdot\,)$ for each $s \in [0,\infty)$ and all $f \in L^p(\T)$. 
	Then $0$ is a first-order pole of the resolvent of the generator, so $T$ is uniformly mean ergodic in the sense of $C_0$-semigroup theory. 
	Moreover, the fixed space of $T$ consists of the constant functions only and is thus one-dimensional. 
	However, $T$ is clearly not quasi-compact. 
	Hence, the Niiro--Sawashima type \cref{nisa} shows that $T$ is not uniformly mean ergodic in the sense 
	of Definition~\ref{def:unif-mean-ergodic}.
	This also demonstrates that poles of the resolvent of the generator do not, in general, 
	translate to poles in the sense of Definition~\ref{def:pole-riesz}.
	
	It is worthwhile pointing out, though, that there is also a classical Niiro--Sawashima type theorem for generators 
	of positive $C_0$-semigroups \cite[Theorem~C-III-3.14]{Nage1986}: 
	if $T$ is a positive $C_0$-semigroup on a Banach lattice with generator $A$ and with, say, spectral bound $0$, 
	and $0$ is a pole of the resolvent, then all other spectral values of $A$ on the imaginary axis $\mathrm{i}\R$ 
	are poles of the resolvent, too. 
\end{remark}

\parindent 0pt
\parskip 0.5\baselineskip
\setlength{\footskip}{4ex}
\bibliographystyle{alpha}
\footnotesize

\bibliography{./bibliography} 
\footnotesize

\end{document}